\numberwithin{equation}{section}%
\newtheorem {theorem1}{Theorem}[section]
\newtheorem {theorem}[theorem1]{Theorem}
\newtheorem {corollary}[theorem1]{Corollary}
\newtheorem {proposition}[theorem1]{Proposition}
\newtheorem {lemma}[theorem1]{Lemma}
\theoremstyle{definition}
\newtheorem {definition}[theorem1]{Definition}
\newtheorem {cond}[theorem1]{}
\newtheorem {condstarred}{}
\newtheorem {example}[theorem1]{Example}
\theoremstyle{remark}
\newtheorem {remark}[theorem1]{Remark}
\newcommand{\sS}{\ensuremath{\cal S}}
\newcommand{\Sp}{\ensuremath{\textup{Sp}}}
\newcommand{\funSop}{^{\sS^{\op}}}
\newcommand{\op}{{\ensuremath{\textup{op}}}}
\DeclareMathOperator{\Hor}{\ensuremath{\textup{Hor}}}
\DeclareMathOperator{\Ho}{\ensuremath{\textup{Ho}}}
\newcommand {\cofib} {\ensuremath{\hookrightarrow}}
\newcommand {\fibr} {\ensuremath{\twoheadrightarrow}}
\newcommand {\trivcofib} {\ensuremath{\xhookrightarrow \sim}}
\newcommand {\trivfibr} {\ensuremath{\tilde\twoheadrightarrow}}
\newcommand {\we} {\ensuremath{\xrightarrow \sim}}
\newcommand{\vertmap}[2]{\ensuremath{\overset {#1} {\underset #2 \downarrow}}}
\DeclareMathOperator{\hocolim}{\textup{hocolim}}
\DeclareMathOperator{\holim}{\textup{holim}}
\DeclareMathOperator{\MC}{\textup{Cyl}}
\newcommand{\cal}[1]{\ensuremath{\mathcal #1}}
\newcommand{\cat}[1]{\ensuremath{\EuScript #1}}
\newcommand{\colim}{\ensuremath{\mathop{\textup{colim}}}}
\newcommand{\Id}{\ensuremath{\textup{Id}}}
\def\ev{\ensuremath{\textit{ev}}}
\newcommand{\co}{\colon\thinspace}%
\newcommand{\mc}[1]{\ensuremath{\cat{#1}}}%
\renewcommand{\hom}{\ensuremath{{\rm hom}}}%
\title{Duality and small functors}
\author{Georg Biedermann}
\author{Boris Chorny}
\date{\today}
\begin{document}
\begin{abstract}
The homotopy theory of small functors is a useful tool for studying various questions in homotopy theory. In this paper, we develop the homotopy theory of small functors from spectra to spectra, and study its interplay with Spanier-Whitehead duality and enriched representability in the dual category of spectra.

We note that Spanier-Whitehead duality functor $D\colon \Sp\to \Sp^{\op}$ factors through the category of small functors from spectra to spectra, and construct a new model structure on the category of small functors, which is Quillen equivalent to $\Sp^{\op}$. In this new framework for the Spanier-Whitehead duality, $\Sp$ and $\Sp^{\op}$ are full subcategories of the category of small functors and dualization becomes just a fibrant replacement in our new model structure.
\end{abstract}

\maketitle
\tableofcontents

\section{Introduction}
In this paper we give an extension of Spanier-Whitehead duality by producing a Quillen equivalent model for the opposite category of spectra. \\

\paragraph*{\bf Theorem~\ref{Yoneda Quillen equivalence}} \emph{
Let $Y\co \Sp^{\op}\to\Sp^{\Sp}$ be the Yoneda embedding and $Z$ its left adjoint functor.
There is a Quillen equivalence $Z\co\Sp^{\Sp}\rightleftarrows\Sp^{\op}\! :Y$ for a certain model structure on the category $\Sp^{\Sp}$ of small endofunctors of spectra.} 
\vspace{2mm}

As a consequence we prove the following theorem about enriched representability of small covariant functors from spectra to spectra up to weak equivalence. 
\vspace{2mm}

\paragraph*{\bf Theorem~\ref{dual-Brown}} \emph{Let $F\colon \Sp\to \Sp$ be a small functor. Assume that $F$ takes homotopy pullbacks to homotopy pullbacks and also preserves arbitrary products up to homotopy. Then there exists a cofibrant spectrum $Y$ and a natural transformation $F(-)\to R^{Y}(-)$, inducing a weak equivalence $F(X)\we R^{Y}(X)$ for all fibrant $X\in \Sp$.}
\vspace{2mm}

The definitions of representable and small functors are given at the end of the introduction, before the description of the structural organization of the paper.

Let $\Sp$ denote a closed symmetric monoidal model for the stable homotopy category that is locally presentable, with cofibrant unit $S$, and that satisfies the monoid axiom \cite[Def. 2.2]{ss:monoid}. We call the objects spectra.
In Section~\ref{models-for-spectra} we prove that symmetric spectra \cite{HSS} and Lydakis' pointed simplicial functors \cite{Lydakis} with the linear model structure meet the criteria. 

Taking a fibrant representative $\hat S$ for the sphere spectrum $S$, the Spanier-Whitehead dual of a spectrum $A$ is given by the enriched morphism object  
  $$D A=\hom_{\Sp}(A,\hat S)$$
in $\Sp$. We point out that we do not insist on $A$ to be compact.
It coincides with the classical notion of Spanier-Whitehead dual if $A$ is compact and cofibrant. 
This functor $D\colon \Sp^{\op}\to \Sp$ is adjoint to itself, since
\begin{align*}
  \hom_{\Sp}(A,D B)&\cong\hom_{\Sp}(A,\hom_{\Sp}(B,\hat S))\cong\hom_{\Sp}(B,\hom_{\Sp}(A,\hat S)) \\
                 &\cong\hom_{\Sp^{\op}}(D A, B).
\end{align*}
This adjunction factors through the category $\Sp^{\Sp}$ of small functors:
\[
\xymatrix{
\Sp^{\op}  \ar@/_5pt/@{^(-->}[ddr]_{Y} \ar@/^5pt/@{-->}[rr]^D
                    &  						&\Sp.
								     \ar@/_5pt/@{_(->}[ddl]_W
								     \ar@/^5pt/[ll]^D \\
\\
					 & \Sp^{\Sp}
					\ar@/_5pt/@{-->}[uur]_{\ev_{\hat S}}
					\ar@/_6pt/[uul]_{Z}
}
\]
Here $Y$ is the Yoneda embedding. Further, for all $F\in \Sp^\Sp$ we set
  $$Z(F)=\hom(F, \Id)$$
to be spectrum of natural transformation from $F$ to the identity functor of $\Sp$ and  
  $$ \ev_{\hat S}(F)=F(\hat S)$$
the functor which evaluates every $F$ at the chosen fibrant replacement $\hat S$ of the sphere spectrum $S$.  
For all $A\in \Sp$, we set
  $$W(A)=A\wedge R^{\hat S},$$ 
where $R^{\hat S}=\hom_{\Sp}(\hat S,-)$ is the functor represented by $\hat S$. See Section~\ref{adjunction} for more details.
The left adjoint functors are depicted by the solid arrows.

We view theorem~\ref{Yoneda Quillen equivalence} as another approach to the extension of Spanier-Whitehead duality to non-compact spectra as the one proposed by J.~D.~Christensen and D.~C.~Isaksen \cite{Christensen-Isaksen-duality}, where the model for $\Sp^{\op}$ was constructed on the category of pro-spectra. There is an interesting feature that distinguishes our construction: Proposition~\ref{cor:aleph-0-small} states that every object in $\Sp^{\Sp}$ is weakly equivalent to an $\aleph_0$-small representable functor, which is fibrant and cofibrant in our model structure. Since the category of small functors contains full subcategories equivalent to $\Sp$ and $\Sp^\op$, which intersect precisely at the category of compact spectra (see Lemma~\ref{compact-dual}), we obtain a coherent picture of (extended) Spanier-Whitehead duality for non-compact spectra.

Let us move on to Theorem~\ref{dual-Brown}. How does it relate to other representability theorems?
Roughly speaking, in category theory there are two main types of representability theorems: Freyd representability and Brown representability. 
{\it Freyd representability} theorem takes its origin in the foundational book \cite{Freyd-abelian} by P. Freyd on abelian categories and states that limit preserving set valued functors defined on an \emph{arbitrary} complete category and satisfying the solution set condition are representable. It is intimately related to the celebrated adjoint functor theorem.
The first {\it Brown representability} theorem was proven in a seminal article \cite{Brown} by E.H.~Brown on cohomology theories and states that an \emph{arbitrary} semi-exact functor defined on the homotopy category of pointed connected spaces and taking values in the category of pointed sets is representable.

Both theorems have been applied many times and extended to new frameworks. The main difference between the two representability results is that Freyd's theorem imposes the solution set condition on the functor, while not demanding any set theoretical restrictions from the domain category of the functor. On the other side, Brown's theorem uses in a significant way the presence of a set of small generators in the domain category, while not imposing any set theoretical conditions on the functor itself.

Enriched Freyd representability was proven by M.~Kelly, \cite[4.84]{Kelly}. J.~Lurie, \cite[5.5.2.7]{Lurie} proved the analog in the framework of ($\infty$,1)-categories.
The solution set condition is replaced by the accessibility condition on the functor in both cases. Note that a covariant functor with an accessible category in the domain is small if and only if it is accessible, but the concept of small functor is applicable even if the domain category is not accessible.

The enriched version of Brown representability theorem for contravariant functors from spaces to spaces was proven by the second author in \cite{Chorny-Brownrep}. J.~F.~Jardine \cite{Jardine-Rep} generalized the theorem for functors defined on a cofibrantly generated simplicial model category with a set of compact generators.

The smallness assumption on the functor classifies our theorem as a Freyd-type result up to homotopy. On the other hand, our exactness assumptions on the functor are less restrictive than in Freyd's theorem and closer to a Brown-type theorem. Brown representability for covariant functors from the homotopy category of spectra to abelian groups was proven by A.~Neeman \cite{Neeman-dual}. An enriched version of Neeman's theorem is still not proven.
 
In homotopy theory, there is a third kind of theorem: G.W.~Whitehead's \cite{GWWhitehead} representability of homological functors  where, for a covariant homological functor $F$, an object $C$ is constructed together with an objectwise weak equivalence
  $$ F(-)\stackrel{\simeq}{\longrightarrow} C\wedge (-). $$
Its enriched counterpart was proven by T.~Goodwillie \cite{Goo:calc3} as classification of linear functors. Whithead's representability is related to Brown's representability on finite spectra through the Spanier-Whitehead duality, as it was explained by J.F.~Adams \cite{Adams-Brownrep}. An enriched version of this connection is contained in Lemma~\ref{compact-dual} and is central in our proof of the representability theorem.  

If a category \cat K is enriched in a closed symmetric monoidal category \cat V, then a functor $F\colon \cat K\to \cat V$ is {\it called $($\cat V-enriched$)$ representable} if there exists an object $K\in \cat K$ and a natural isomorphism of functors $\eta\colon F(-)\to \hom_{\cat K}(-,K)$, where $\hom_{\cat K}(-,-)\colon \cat K^{\op}\times \cat K \to \cat V$ is the enriched  $\hom$ functor. Our notation for representable functors is $R_K(-)=\hom_{\cat K}(-,K)$ and $R^K=\hom_{\cat K}(K,-)$. 

A \emph{small functor} from one large category to another is a left Kan extension of a functor defined on a small, not necessarily fixed, subcategory of the domain. Equivalently, if the domain category is enriched over the range category, small functors are small weighted colimits of representable functors. The category of small functors is a reasonable substitute for the non-locally small category of all functors, provided that we are interested in studying global phenomena and not satisfied with changing the universe as an alternative solution. Several variations of this concept for set-valued functors were extensively studied by P.~Freyd  \cite{Freyd}. In algebraic geometry, small functors were used by W.~C.~Waterhouse \cite{Waterhouse} under the name `basically bounded presheaves' in order to treat categories of presheaves over large sites without changing the universe, since such a change might also alter the sets of solutions of certain Diophantine equations. For enriched settings, our main reference is the work of B.~Day and S.~Lack \cite{Lack}. Recently, several applications of small functors from spaces to spaces have appeared in homotopy theory \cite{BCR}, \cite{Chorny-Dwyer}.

The paper is organized as follows.
Section~\ref{fibrant-projective-section} is devoted to the construction of a new model category structure on small functors, which is close to the projective model category, except that weak equivalences and fibrations are determined only on the values of the functors on fibrant objects. Hence, it is called the \emph{fibrant-projective} model structure. Its goal is to create an initial framework in which the adjunction $(Z,Y)$ is a Quillen pair. In Section~\ref{models-for-spectra} we provide model categories for spectra that satisfy the conditions given in the previous section. 
In Section~\ref{section-homotopy} we obtain an auxiliary result~\ref{approx}.
To obtain the promised new Quillen equivalent model for $\Sp^{\op}$, where every spectrum corresponds to a representable functor, we perform in Section~\ref{Q-loc} a non-functorial version of Bousfield-Friedlander's $Q$-construction on $\Sp^{\Sp}$.
This is the crucial technical part of this paper. We localize the fibrant-projective model structure on $\Sp^{\Sp}$ with respect to the ``derived unit" of the adjunction $(Z,Y)$. Our localization construction fails to be functorial; nevertheless, it preserves enough good properties to allow us to get a left Bousfield localization of $\Sp^{\Sp}$ along the lines of the Bousfield-Friedlander localization theorem \cite{BF:gamma}. In the Appendix~\ref{gen-BF}, we provide an appropriate generalization of the Bousfield-Friedlander machinery to encompass non-functorial homotopy localizations.
The represntability theorem \ref{dual-Brown} is derived in the last Section~\ref{sec:rep}.

\subsection*{Acknowledgements}
We would like to that A.~K.~Bousfield for helping us to prove the ``only if" part in the classification of $Q$-fibrations in Theorem \ref{main} and the anonymous referee for many useful suggestions.

\section{Yoneda embedding for large categories}\label{adjunction}

In this article, we consider enriched categories and enriched functors.
Some sources do not distinguish between the cases of small and large domain categories, although functors from large categories have large \hom-sets, i.e., proper classes. Morphism sets and internal mapping objects only make sense after a change of universes.
Unfortunately, we cannot adopt this approach, as the internal mapping objects will play a crucial role in the construction of homotopy theories on functors. Thus, we will use small functors and the Yoneda embedding with values in the category of small functors.

The language of enriched category theory is used throughout the paper. The basic definitions and notations may be found in Max Kelly's book, \cite{Kelly}.

\begin{definition}
Let \cat V be a symmetric monoidal category and \cat K a \cat V-category. A \cat V-functor from \cat K to \cat V is called a {\it small functor} if it is a \cat V-left Kan extension of a \cat V-functor defined on a small but not necessarily fixed subcategory of \cat K. The category of small functors is denoted by $\cat V^{\cat K}$.
\end{definition}

The main example of the symmetric monoidal model category \cat V considered in this paper is the category $\Sp$ of spectra. As explained in Section~\ref{models-for-spectra} we can work with either symmetric spectra, \cite{HSS}, or Lydakis' category of linear functors, \cite{Lydakis}. In the future we hope to extend the ideas of this paper to make them applicable for functors enriched in simplicial sets $\cal S$ or chain complexes, so we record the basic results in bigger generality, than required for the present paper.

\begin{definition}
The {\it enriched covariant Yoneda embedding functor}
   $$ Y\colon \cat K^\op \to\cat V^{\cat K}$$
is given by mapping an object $K$ in $\cat K$ to the {\it \cat V-enriched covariant representable functor}
   $$ R^K\colon\cat K\to\cat V, L\mapsto \hom_{\cat K}(K,L)=R^K(L). $$
\end{definition}

\begin{remark}
For all $K$ the functor $R^K$ is small as it is Kan extended from the full subcategory of \cat K given by the object $K$.
\end{remark}

\begin{definition}
We denote the \cat V-left adjoint to $Y$ to be the end construction $Z(F)= \int_{K\in \cat K} \hom_{\cat V}(F(K), K)$.
\end{definition}
Note that if $\cat K=\cat V$, as we will assume from some point in this paper, then the end in the definition above becomes just a mapping object in the category of small functors $\cat V^{\cat V}$:
\[
\forall F\in \cat V^{\cat V},\quad\, Z(F)= \hom_{\cat V^{\cat V}}(F, \Id_{\cat V}).
\]

We obtain the Yoneda adjunction
\begin{equation}\label{adj-V-K}
Z\co \cat V^{\cat K}\rightleftarrows\cat K^{\op} : \! Y, 
\end{equation}
which we turn into a Quillen adjunction in Proposition \ref{Quillen-adjunction}.

Let us briefly verify that the functor $Z$ is indeed the left adjoint of $Y$. Let $F\in \cat V^{\cat K}$ and $X\in \cat K$, then
\begin{align*}
\hom_{\cat K^{\op}}(Z(F), X) &= \hom_{\cat K}(X, Z(F)) \quad\text{by definition of }Z(F)\\
			&= \hom_{\cat K}(X, \int_{K\in \cat K}K^{F(K)}) \quad\text{by the universal property of an end}\\
			&= \int_{K\in \cat K} \hom_{\cat K}(X, K^{F(K)}) \quad\text{since \cat K is cotensored over \cat V }\\
			&= \int_{K\in \cat K} \hom_{\cat V}(F(K), \hom_{\cat K}(X,K)) \quad					\text{by definition of the object}\\
			&\hspace*{6cm}\text{of natural transformations}\\
			&= \hom_{\cat V^{\cat K}}(F, Y(X)).
\end{align*}

In \cite{Chorny-Brownrep}, $\cat K = \sS^\op$ and the Yoneda embedding $Y\colon \sS\cofib \sS\funSop$ was of central importance. In the current article, we take $\cat K=\Sp$. The Yoneda embedding $Y\colon \Sp^{\op}\cofib \Sp^{\Sp}$ plays an analogous role as before and we will turn the adjunction $(Z,Y)$ into a Quillen equivalence in Theorem \ref{Yoneda Quillen equivalence}.

\section{Homotopy theory of small functors}\label{fibrant-projective-section}

We want the Yoneda adjunction (\ref{adj-V-K}) in the case $\cat V=\Sp$ to be a Quillen pair between suitable model structures on each side. The projective model structure constructed by Chorny and Dwyer \cite{Chorny-Dwyer} on the category of small functors, where weak equivalences and fibrations are objectwise, is not suitable: if we apply $Y(v)=\cat V(-,v)$ to a trivial fibration in $\cat V^{\op}$, aka. a trivial cofibration in $\cat V$, then for non-fibrant $v$ this map will not remain a weak equivalence. So $Y$ is not right Quillen.

We remedy this shortcoming with the following new model structure, which is introduced after we recall a few standard definitions.

\begin{definition}
Let $I$ be a class of maps in a category \cat C. Following standard conventions \cite[10.5.2]{Hirschhorn}, we denote by $I$-inj the class of maps that have the right lifting property with respect to all maps in $I$. We denote by $I$-cof the class of maps that have the left lifting property with respect to all maps in $I$-inj. We denote by $I$-cell the class of relative cell complexes obtained from all maps in $I$ as defined in \cite[10.5.8]{Hirschhorn}.
\end{definition}

\begin{definition}
Let \cat V be a closed symmetric monoidal model category and let $\cat K$ be a $\cat V$-model category. A $\cat V$-natural transformation $f\colon F\to G$ in the category of small functors $\cat V^{\cat K}$ is a \emph{fibrant-projective weak equivalence} (resp., a \emph{fibrant-projective fibration}) if for all fibrant $K\in \cat K$ the map $f(K) \colon F(K)\to G(K)$ of objects of \cat V is a weak equivalence (resp., a fibration). We often abbreviate the functor category $\cat V^{\cat K}$ by $\cat F$.
\end{definition}

The main result of this section is Theorem~\ref{fibrant-projective}, where we show that the fibrant-projective weak equivalences and the fibrant-projective fibrations equip $\cat F$ with a model structure, which is, naturally, called fibrant-projective. 

\begin{definition} We recall the following definitions.
\begin{enumerate}
   \item
A category is \emph{class $\mu$-locally presentable}, \cite{Chorny-Rosicky-1}, if it is complete and cocomplete and has a class $\cal A$ of $\mu$-presentable objects such that every other object is a filtered colimit of the elements of $\cal A$.
It is \emph{class locally presentable} if there is a $\mu$ for which $\cal A$ is \emph{class $\mu$-locally presentable}.
   \item
A model category is \emph{class $\mu$-cofibrantly generated}, \cite{Chorny-Rosicky-2}, if there exist classes of generating (trivial) cofibrations with $\mu$-presentable domains and codomains satisfying the generalized small object argument \cite{pro-spaces}. A model category is \emph{class cofibrantly generated} if it is \emph{class $\mu$-cofibrantly generated} for some cardinal $\mu$. 
   \item
A model category is \emph{class $\mu$-combinatorial} if it is class $\mu$-locally presentable and class $\mu$-cofibrantly generated. A model category is \emph{class combinatorial} if it is class $\mu$-combinatorial for some cardinal $\mu$.
	\item
A $\cat V$-model category is class combinatorial, \cite{Chorny-Rosicky-2}, if its underlying category is so. An object of a $\cat V$-category is $\lambda$-presentable if it is $\lambda$-presentable in the underlying category.
\end{enumerate}
\end{definition}

If in the previous definition the various classes required to exist are in fact sets one recovers the well-known concepts of $\mu$-local presentability, cofibrant generation and so forth.

\begin{definition}[\cite{ss:monoid}]\label{monoid-axiom}
Let ${\rm tcof}_{\mc{V}}$ be the class of trivial cofibrations in  $\mc{V}$. Let $\mathcal{E}_{\mc{V}}$ be the class of relative cell complexes in \mc{V} generated by the class of morphisms
 \[ \{j\otimes A\,|\,j \in{\rm tcof}_{\mc{V}},A\in{\rm ob}\mc{V}\}. \]
The model structure on \mc{V} satisfies the {\it monoid axiom} if every morphism in $\mathcal{E}_{\mc{V}}$ is a weak equivalence.
\end{definition}

\begin{definition}[\cite{DRO:enriched} Def. 4.6]\label{Def. strongly left proper}
Let ${\rm cof}_{\mc{V}}$ be the class of cofibrations in  $\mc{V}$. Let $\mathcal{D}_{\mc{V}}$ be the class of relative cell complexes generated by the class of morphisms
 \[ \{i\otimes A\,|\,i \in{\rm cof}_{\mc{V}},A\in{\rm ob}\mc{V}\}. \]
The model structure on \mc{V} is {\it strongly left proper} if the cobase change of a weak equivalence along any map in $\mathcal{D}_{\mc{V}}$ is a weak equivalence.
\end{definition}

Now we state the main result of this section.
\begin{theorem}\label{fibrant-projective}
Let $\lambda$ be a regular cardinal.
Let $\cat V$ be a closed symmetric monoidal category equipped with a $\lambda$-combinatorial model structure such that the unit $S\in \cat V$ is a cofibrant object and the monoid axiom $\ref{monoid-axiom}$ is satisfied. Let \cat K be a $\lambda$-combinatorial \cat V-model category. Then the category of small functors $\cat V^{\cat K}=\cat F$ with the fibrant-projective weak equivalences, fibrant-projective fibrations and the cofibrations given by the left lifting property is a class-combinatorial \cat V-model category. It is right proper if the model structure on \cat V is. It is left proper if the model structure on \cat V is strongly left proper. 
\end{theorem}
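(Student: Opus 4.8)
The plan is to obtain the fibrant‑projective structure by a recognition‑theorem (``transfer'') argument, using the \emph{generalized} small object argument of \cite{pro-spaces}, since the natural generating families will be proper classes. Fix sets $I_{\cat V}$ and $J_{\cat V}$ of generating cofibrations and generating trivial cofibrations for $\cat V$ and put
\[
I=\{\,R^K\otimes i\mid K\in\cat K\ \text{fibrant},\ i\in I_{\cat V}\,\},\qquad
J=\{\,R^K\otimes j\mid K\in\cat K\ \text{fibrant},\ j\in J_{\cat V}\,\},
\]
where $R^K\otimes A$ denotes the copower of the representable $R^K$ by $A\in\cat V$, i.e.\ the left Kan extension to $\cat K$ of the one‑object functor $K\mapsto A$. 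Everything rests on the Yoneda adjunction isomorphism $\cat F(R^K\otimes A,G)\cong\cat V(A,G(K))$, which shows that a map $f$ in $\cat F$ has the right lifting property against $R^K\otimes i$ if and only if $f(K)$ has it against $i$. Hence $I\text{-inj}$ is exactly the class of fibrant‑projective trivial fibrations, $J\text{-inj}$ is exactly the class of fibrant‑projective fibrations, and the cofibrations of the statement (those with the left lifting property against $I\text{-inj}$) are precisely the class $I\text{-cof}$.

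Next one does the set‑theoretic bookkeeping. By Day--Lack \cite{Lack}, $\cat F=\cat V^{\cat K}$ is complete and cocomplete, with colimits computed pointwise and, since $\cat V$ is locally presentable, finite limits of small functors computed pointwise as well; by \cite{Chorny-Rosicky} it is class‑locally presentable, say class $\mu$‑locally presentable for some regular $\mu\ge\lambda$. The domains and codomains of $I$ and $J$ are then $\mu$‑presentable in $\cat F$: mapping out of $R^K\otimes A$ is $\cat V(A,\ev_K(-))$, and $\ev_K$ preserves $\mu$‑filtered colimits because these are pointwise, while $A$ is $\lambda$‑presentable --- hence $\mu$‑presentable --- in $\cat V$. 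Thus $I$ and $J$ are classes with $\mu$‑presentable domains and codomains, the generalized small object argument of \cite{pro-spaces} applies, and it produces the two weak factorization systems $(I\text{-cof},I\text{-inj})$ and $(J\text{-cof},J\text{-inj})$, with every map of the left class a retract of a relative cell complex built from the corresponding family. This is what forces the word \emph{class} in ``class‑combinatorial.''

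The heart of the proof is acyclicity. First, $J\subseteq I\text{-cof}$, because by the adjunction above $R^K\otimes j$ lifts against every fibrant‑projective trivial fibration (the trivial cofibration $j$ lifts against every trivial fibration of $\cat V$); hence $J\text{-cof}\subseteq I\text{-cof}$. Second --- and this is exactly where the monoid axiom is used --- every relative $J$‑cell complex is a fibrant‑projective weak equivalence: for fibrant $L$ the cocontinuous functor $\ev_L$ carries it to a transfinite composite of pushouts of coproducts of maps of the form $\cat K(K,L)\otimes j$ with $j\in J_{\cat V}$, that is, into the class $\mathcal{E}_{\cat V}$ of Definition \ref{monoid-axiom}, whose members are weak equivalences; as weak equivalences are closed under retracts, all of $J\text{-cof}$ consists of fibrant‑projective weak equivalences. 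Conversely, if $f\in I\text{-cof}$ is a fibrant‑projective weak equivalence, factor $f=p\circ k$ with $k$ a relative $J$‑cell complex and $p\in J\text{-inj}$; by two‑out‑of‑three $p$ is a fibrant‑projective weak equivalence as well, hence a fibrant‑projective trivial fibration, i.e.\ $p\in I\text{-inj}$; since $f\in I\text{-cof}$ it is a retract of $k$ and so lies in $J\text{-cof}$. Therefore a map is a fibrant‑projective weak equivalence lying in $I\text{-cof}$ precisely when it lies in $J\text{-cof}$. The remaining axioms are routine: (co)completeness is Day--Lack; two‑out‑of‑three and the retract axiom hold because weak equivalences and fibrations are detected pointwise on fibrant objects while cofibrations form a ``$\text{-cof}$'' class; one lifting axiom is the definition of cofibration, the other is $J\text{-cof}$ against $J\text{-inj}$; and the two factorizations were produced above.

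Finally one upgrades to a $\cat V$‑model structure and checks properness. For the enrichment it suffices, by the usual reduction to generators (cofibrancy of the unit $S$ handling the unit axiom), to observe that $(R^K\otimes i)\,\square\,i'\cong R^K\otimes(i\,\square\,i')$, where $i\,\square\,i'$ is the pushout‑product in the monoidal model category $\cat V$, hence a cofibration that is trivial if $i$ or $i'$ is, and that the cocontinuous functor $R^K\otimes(-)$ sends cofibrations of $\cat V$ into $I\text{-cof}$ and trivial cofibrations into $J\text{-cof}$. For right properness, pullbacks along fibrant‑projective fibrations are pointwise, so one reduces to right properness of $\cat V$. For left properness, pushouts are pointwise and, for fibrant $L$, $\ev_L$ sends a fibrant‑projective cofibration to a retract of a map in the class $\mathcal{D}_{\cat V}$ of Definition \ref{Def. strongly left proper}, whence strong left properness of $\cat V$ gives that cobase change of a weak equivalence along it is again a weak equivalence. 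The main obstacle is the set‑theoretic one --- running the small object argument with \emph{proper classes} $I$, $J$ of generators, which demands a single regular cardinal $\mu$ bounding the presentability of all the $R^K\otimes A$ together with the class‑local presentability of $\cat F$ --- while the passage from the projective structure of \cite{Chorny-Dwyer} to the fibrant‑projective one (needed so that $Y$ becomes right Quillen) changes $I\text{-inj}$ and $J\text{-inj}$ but, reassuringly, leaves the monoid‑axiom computation intact, since $\ev_L(R^K\otimes j)=\cat K(K,L)\otimes j\in\mathcal{E}_{\cat V}$ for every fibrant $L$.
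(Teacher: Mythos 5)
Your overall architecture (Kan's recognition principle, acyclicity of $J$-cell complexes via the monoid axiom, pointwise detection for right properness, retracts of maps in $\mathcal{D}_{\mc{V}}$ for left properness, pushout-product reduction for the $\cat V$-structure) matches the paper's. But there is a genuine gap at the step you dispose of in one sentence: ``the generalized small object argument of \cite{pro-spaces} applies'' because the domains and codomains of $I$ and $J$ are presentable. Presentability of domains is not what is at stake (indeed $R^K\otimes A$ is already $\lambda$-presentable for \emph{every} fibrant $K$, since $\cat F(R^K\otimes A,-)\cong\cat V(A,\ev_K(-))$ and colimits are pointwise); the problem is that $I_{\cat F}$ and $J_{\cat F}$ are \emph{proper classes}, indexed by the proper class of fibrant objects of $\cat K$. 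The ordinary small object argument would require, at each stage of the transfinite construction, a coproduct over the class of all lifting squares from all generators, which is not available in $\cat V^{\cat K}$ (it has only small colimits, and the result would in any case have to remain a small functor). The generalized small object argument of \cite{pro-spaces} therefore demands an extra hypothesis that you never verify: a co-solution set condition, namely that every map $f$ admits a single map $g\in I_{\cat F}$-cof (resp.\ $J_{\cat F}$-cof) through which \emph{all} morphisms $i\to f$ with $i$ in the generating class factor.

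Establishing this condition is the technical heart of the paper (Lemma~\ref{admit-gen-small-ob-arg}), and it is not formal: one passes by adjunction to the pullback functor $W=F^{A}\times_{G^{A}}G^{B}$ and must produce a cofibrant replacement $\widetilde W\to W$ which is in $I_{\cat F}$-inj, i.e.\ a trivial fibration on \emph{all} fibrant objects, while being built from only a \emph{set} of cells (Lemma~\ref{postponed}). That requires the $\lambda$-accessible fibrant replacement functor of \cite{Dugger-presentation} (so that $\hat X$ is $\mu$-presentable when $X$ is), a cardinal $\mu$ bounding the accessibility rank of the small functor $W$, an argument that $p(\hat X)$ is a $\mu$-filtered colimit of trivial fibrations $p(\hat X_i)$, and finally the retract trick (a fibrant $X$ is a retract of $\hat X$) to reach arbitrary fibrant objects. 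None of this appears in your proposal, and without it the factorization axioms — on which your acyclicity argument (``factor $f=p\circ k$ with $k$ a relative $J$-cell complex'') and your CM5 both rest — are unproven. Identifying a single regular cardinal bounding presentability of the generators, which is how you frame the set-theoretic obstacle, does not substitute for the co-solution set condition.
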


\begin{proof}
The category \cat F is complete by the main result of \cite{Lack} and cocomplete by \cite[Prop.~5.34]{Kelly}.

We use the usual recognition principle \cite[11.3.1]{Hirschhorn} due to Kan to establish the remaining axioms for a class-cofibrantly generated model structure.
\begin{enumerate}
   \item
Weak equivalences are obviously closed under retracts and 2-out-of-3.
   \item
There are classes of generating cofibrations $I_{\mc{F}}$ and trivial cofibrations $J_{\mc{F}}$ defined in \ref{generators-for-fib-proj} that admit the generalized small object argument in the sense of \cite{pro-spaces} as proved in \ref{admit-gen-small-ob-arg}.
   \item
A map is $I_{\mc{F}}$-injective if and only if it is $J_{\mc{F}}$-injective and a weak equivalence by Lemma \ref{right-lifting-properties}.
   \item
Every $J_{\mc{F}}$-cofibration is a weak equivalence by \ref{I-cell-in-we}.
\end{enumerate}
The model structure is a \cat V-model structure by Proposition~\ref{V-model-structure}. Right properness can be checked by evaluating on all fibrant objects in \cat K and then follows from the right properness of \cat V.
Left properness is proved in \cite[4.7,4.8]{DRO:enriched}. The key observation is that any fibrant-projective cofibration is objectwise a retract of maps in $\mc{D}_{\mc{V}}$. 
\end{proof}

\begin{corollary}\label{Quillen-adjunction}
If we equip the category $\cat V^{\cat K}=\cat F$ with the fibrant-projective model structure constructed in Theorem~\emph{\ref{fibrant-projective}}, then the adjunction $(\ref{adj-V-K})$ becomes a Quillen pair.
\end{corollary}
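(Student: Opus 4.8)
The plan is to check directly that the right adjoint $Y\colon\cat V^{\op}\to\cat F$ is a right Quillen functor; since $(Z,Y)$ is an adjunction, it suffices to show that $Y$ preserves fibrations and trivial fibrations. Because the weak equivalences in $\cat V^{\op}$ are the weak equivalences of $\cat V$, a fibration (resp.\ trivial fibration) of $\cat V^{\op}$ is exactly a morphism whose reversal $i\colon b\to a$ is a cofibration (resp.\ trivial cofibration) of $\cat V$. The functor $Y$ sends it to the $\cat V$-natural transformation $R^a\to R^b$ whose component at an object $X\in\cat V$ is the precomposition map $\cat V(a,X)\to\cat V(b,X)$, where $\cat V(-,-)$ denotes the internal hom of $\cat V$.

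Next I would unwind the definition of the fibrant-projective model structure on $\cat F$: the transformation $Y(i)$ is a fibrant-projective fibration (resp.\ trivial fibration) precisely when, for every fibrant object $X\in\cat V$, the map $\cat V(a,X)\to\cat V(b,X)$ is a fibration (resp.\ trivial fibration) of $\cat V$. This is an instance of the pushout-product axiom of the closed symmetric monoidal model category $\cat V$ (equivalently, of the fact that $\cat V$ is a $\cat V$-model category over itself): applying the Leibniz form of that axiom to the cofibration $i\colon b\to a$ and to the fibration $X\to *$ --- which is a fibration exactly because $X$ is fibrant, $*$ being the terminal object of the complete category $\cat V$ --- gives that
$$\cat V(a,X)\longrightarrow\cat V(b,X)\times_{\cat V(b,*)}\cat V(a,*)=\cat V(b,X)$$
is a fibration, and is a trivial fibration whenever $i$ is a trivial cofibration. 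Hence $Y$ carries fibrations to fibrant-projective fibrations and trivial fibrations to fibrant-projective trivial fibrations, so $(Z,Y)$ is a Quillen pair.

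I expect no genuine obstacle here: the fibrant-projective structure of Theorem~\ref{fibrant-projective} was tailored precisely so that this argument works verbatim, and the only delicate points are bookkeeping ones --- that fibrations of $\cat V^{\op}$ are reversed cofibrations of $\cat V$, that the object $\cat V(a,X)$ appearing in $R^a$ is the internal hom of $\cat V$, and that restricting the test for fibrations to values at fibrant objects is exactly the portion of the pushout-product axiom that survives when the fibration input is specialized to $X\to *$. (One should also recall the Remark that each $R^K$ is small, so that $Y$ indeed lands in $\cat F$.)
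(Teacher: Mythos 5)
Your argument is correct and is essentially the paper's own proof: the paper likewise observes that a (trivial) fibration in $\cat V^{\op}$ is a (trivial) cofibration $i\colon b\to a$ in $\cat V$ and that $Y(i)\colon R^{a}\to R^{b}$ is a fibrant-projective (trivial) fibration because $\hom(i,X)$ is a (trivial) fibration of $\cat V$ for every fibrant $X$. Your explicit appeal to the Leibniz form of the pushout-product axiom (with the fibration $X\to *$) is just the spelled-out justification of that last step, which the paper leaves implicit.
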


\begin{proof}
In the opposite category $\cat K^{\rm op}$ consider a (trivial) fibration $f^{\rm op}$, which in fact is a (trivial) cofibration $f\colon A\to B$ in \cat K. The induced map $Y(f)\colon R^{B}\to R^{A}$ is a (trivial) fibration in the fibrant-projective model structure, since $\hom(f,W)$ is a (trivial) fibration for every fibrant object $W$ in \cat V. Thus, the functor $Y$ is right Quillen.
\end{proof}

The rest of this section is devoted to the missing steps in the proof of Theorem~\ref{fibrant-projective}. We assume that the closed symmetric monoidal model category \cat V and the \cat V-model category \cat K satisfy the conditions of Theorem~\ref{fibrant-projective}.

The category $\cat V^{\cat K}$ is tensored over $\cat V$ by applying the tensor product of $\cat V$ objectwise:
  $$ (F\otimes V)(K)=F(K)\otimes V $$
for a functor $F$ in $\cat V^{\cat K}$ and objects $V$ in $\cat V$ and $K$ in $\cat K$.

\begin{definition}\label{generators-for-fib-proj}
Let $I_{\cat V}$ and $J_{\cat V}$ be sets of generating cofibrations and generating trivial cofibrations for $\cat V$. We define the following two classes of morphisms in $\cat F$:
\begin{align*}
I_{\mc{F}} &= \left\{R^{X} \otimes A\hookrightarrow R^{X} \otimes B \,\left |\, \vertmap A B \in I_{\cat V}; \; X\in \cat K^{f} \right.\right\},\\
J_{\mc{F}} &=\left\{R^{X} \otimes C \trivcofib R^{X} \otimes D \,\left |\, \vertmap C D \in J_{\cat V}; \; X\in \cat K^{f} \right.\right\},
\end{align*}
where $\cat K^{f}\subset \cat K$ is the subcategory of fibrant objects.
\end{definition}

\begin{remark}
By \cite[Prop.~2.3.3]{Dugger-presentation}, for any $\lambda$-combinatorial model category $\cat K$ there exists a sufficiently large cardinal $\mu$, such that $\cat K$ is $\mu$-combinatorial and there exists a $\mu$-accessible fibrant replacement functor $\widehat{\phantom{X}} \co\mc{K}\to\mc{K}$ sending $\mu$-presentable objects to $\mu$-presentable objects, i.e., for each $X\in \cat K$ there is a natural trivial cofibration $X\trivcofib \hat X$ such that $\hat X$ is $\mu$-presentable whenever $X$ is and $\widehat{\phantom X}$ commutes with $\mu$-filtered colimits.
\end{remark}

From now on and for the rest of the whole article we fix a choice of a fibrant replacement functor $\widehat{\phantom{x}}$ as in the previous remark on the source category.

\begin{remark}
Every fibrant object $X$ is a retract of $\hat X$. It follows that the generating classes $I_{\mc{F}}$ and $J_{\mc{F}}$ can be replaced by the classes
\begin{align*}
I_{\mc{F}}' &= \left\{R^{\hat X} \otimes A\hookrightarrow R^{\hat X} \otimes B \,\left |\, \vertmap A B \in I_{\cat V}; \; X\in \cat K \right.\right\},\\
J_{\mc{F}}' &=\left\{R^{\hat X} \otimes C \trivcofib R^{\hat X} \otimes D \,\left |\, \vertmap C D \in J_{\cat V}; \; X\in \cat K \right.\right\},
\end{align*}
because the retract argument allows one to see that the classes of maps with the respective right lifting properties coincide.
\end{remark}

\begin{definition}
We define $\cal R$ to be the class of maps in $\cat V^\cat K$ that are trivial fibrations when evaluated on all fibrant objects.
We define $\cal T$ to be the class of maps that are fibrations when evaluated on all fibrant objects.
\end{definition}

\begin{lemma} \label{right-lifting-properties}
We have:
\begin{enumerate}
   \item A map is in $\cal R$ if and only if it has the right lifting property with respect to all maps in $I_{\cat F}$: $\cal R=I_{\cat F}$-inj.
   \item A map is in $\cal T$ if and only if it has the right lifting property with respect to all maps in $J_{\cat F}$: $\cal T=J_{\cat F}$-inj.
\end{enumerate}
\end{lemma}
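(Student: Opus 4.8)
The plan is to transpose the two lifting problems across the enriched Yoneda lemma, so that they become lifting problems against $I_{\cat V}$ and $J_{\cat V}$ in $\cat V$, and then to invoke the cofibrant generation of $\cat V$. First I would record the basic input: for a fixed $X\in\cat K$ and $A\in\cat V$, the copower $R^X\otimes A$ is again a small functor — since small functors are closed under small weighted colimits, computed objectwise, so that $(R^X\otimes A)(L)=\cat K(X,L)\otimes A$ — and hence $I_{\cat F}$ and $J_{\cat F}$ genuinely are classes of maps in $\cat F$. Combining the tensor–cotensor adjunction in the $\cat V$-category $\cat F$ with the enriched Yoneda isomorphism $\cat F(R^X,F)\cong F(X)$ yields a natural isomorphism in $\cat V$
\[
 \cat F\bigl(R^X\otimes A,\, F\bigr)\;\cong\;\cat V\bigl(A,\, \cat F(R^X,F)\bigr)\;\cong\;\cat V\bigl(A,\, F(X)\bigr),
\]
natural in $A\in\cat V$ and $F\in\cat F$. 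On underlying sets this exhibits $R^X\otimes(-)\colon\cat V\to\cat F$ as a left adjoint to the evaluation functor $\ev_X\colon\cat F\to\cat V$, $F\mapsto F(X)$, with $\ev_X(f)=f(X)$.

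Granting this adjunction, the rest is formal, and I would argue as follows. For any $f\colon F\to G$ in $\cat F$ and any $g\colon A\to B$ in $\cat V$, transposing commutative squares — and diagonal fillers — across $\bigl(R^X\otimes(-)\bigr)\dashv\ev_X$ shows that $f$ has the right lifting property with respect to $R^X\otimes g\colon R^X\otimes A\to R^X\otimes B$ if and only if $f(X)$ has the right lifting property with respect to $g$. Quantifying over the data, $f\in I_{\cat F}\inj$ if and only if, for every fibrant $X\in\cat K^{f}$, the map $f(X)$ has the right lifting property with respect to every $g\in I_{\cat V}$; equivalently, $f(X)\in I_{\cat V}\inj$ for all fibrant $X$. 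Since $\cat V$ is cofibrantly generated by $I_{\cat V}$, the class $I_{\cat V}\inj$ is precisely the class of trivial fibrations of $\cat V$, so the condition reads: $f(X)$ is a trivial fibration for all fibrant $X$, i.e.\ $f\in\cal S$. That proves (1). Part (2) is identical with $I$ replaced by $J$ throughout, using that $J_{\cat V}\inj$ is the class of fibrations of $\cat V$; thus $f\in J_{\cat F}\inj$ if and only if $f(X)$ is a fibration for all fibrant $X$, i.e.\ $f\in\cal T$.

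The only step carrying any content is the displayed adjunction $\bigl(R^X\otimes(-)\bigr)\dashv\ev_X$, so the main thing I would need to justify carefully is the compatibility of smallness with the enriched structure: that the end defining $\cat F(R^X,F)$ exists (which is fine, as $\cat F$ is complete by \cite{Lack}), that $R^X\otimes A$ is small, and that the copower in $\cat F$ is pointwise. These are standard consequences of the Day--Lack theory; once they are in place, the lemma is a routine transposition of lifting diagrams combined with the recognition of $I_{\cat V}\inj$ and $J_{\cat V}\inj$, with no genuine obstacle.
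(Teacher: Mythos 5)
Your argument is correct and is exactly the routine transposition the paper has in mind: the paper's own proof of this lemma is simply the word ``Straightforward,'' and your write-up via the adjunction $R^X\otimes(-)\dashv \ev_X$ (enriched Yoneda plus copower) together with $I_{\cat V}\text{-inj}$ and $J_{\cat V}\text{-inj}$ being the trivial fibrations and fibrations of $\cat V$ is the standard way to fill in that claim. No gaps.
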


\begin{proof}
Straightforward.
\end{proof}

\begin{lemma}\label{cofibrant representables}
For any fibrant object $X$ in $\cat K$, the canonical map $\emptyset\to R^X$ has the left lifting property with respect to all maps in $\cal R$, i.e., it is in $I_{\mc{F}}$\textup{-cof}.
\end{lemma}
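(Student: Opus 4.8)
The plan is to show directly that for a fibrant object $X \in \cat K$, the map $\emptyset \to R^X$ lifts against every $f \colon E \to B$ in $\cal S$, i.e. against every map that is a trivial fibration on all fibrant objects. Since $\cal S = I_{\cat F}\text{-inj}$ by Lemma~\ref{right-lifting-properties}, this is exactly the assertion that $\emptyset \to R^X$ is in $I_{\cat F}\text{-cof}$. Concretely, given such an $f$, I must produce a natural transformation $s \colon R^X \to E$ with $f \circ s = \id_{R^X}$ (the square with initial object upper-left has a unique top map, so a lift is just a section of $f$ over $R^X$).

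The key tool is the enriched Yoneda lemma: natural transformations $R^X \to G$ in $\cat V^{\cat K}$ are in bijection with points of $G(X)$, i.e. with maps $S \to G(X)$ in $\cat V$ where $S$ is the monoidal unit (which is cofibrant by hypothesis). So first I would reduce the lifting problem to the following: find a dotted arrow making
\begin{equation*}
\xymatrix{
\emptyset \ar[r] \ar[d] & E(X) \ar[d]^{f(X)} \\
S \ar[r] \ar@{-->}[ur] & B(X)
}
\end{equation*}
commute in $\cat V$, where the bottom map classifies the identity $R^X \to R^X$ (or rather the given composite $\emptyset \to R^X \to B$). Here is where fibrancy of $X$ enters: because $X$ is a fibrant object of $\cat K$, $f(X) \colon E(X) \to B(X)$ is a \emph{trivial fibration} in $\cat V$ by definition of $\cal S$. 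Since $S$ is cofibrant, $\emptyset \to S$ is a cofibration, and the lift exists by the lifting axiom in the model category $\cat V$. Then transport this chosen point $S \to E(X)$ back across the enriched Yoneda correspondence to obtain $s \colon R^X \to E$; naturality of the Yoneda bijection in the target variable guarantees $f \circ s$ corresponds to $f(X) \circ (\text{the point}) = (\text{the original point of } B(X))$, which is the map classifying $\emptyset \to R^X \to B$, so $f \circ s$ is the required map and $s$ is a section as needed.

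The only genuinely delicate point is making sure the Yoneda correspondence is being used in its enriched form and in the right variance: $R^X = \cat K(X, -)$ is covariant representable, and the enriched co-Yoneda lemma gives $\cat V^{\cat K}(R^X, G) \cong G(X)$ naturally in $G$, where on the right we mean the underlying set is $\hom_{\cat V}(S, G(X))$ — this is exactly the kind of statement recorded in Kelly's book \cite{Kelly}, and it applies to small functors $G$ since $R^X$ is small. One should double-check that $E$ and $B$, being small functors, are legitimate targets; they are, since smallness of the source $R^X$ is what matters and $R^X$ is small by the Remark following the definition of $Y$. Everything else is formal: the reduction of the lifting square to a section, and the single application of the model-category lifting axiom in $\cat V$ using cofibrancy of $S$. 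I expect no real obstacle here — the statement is essentially the observation that representables at fibrant objects are cofibrant because evaluation at a fibrant object is a left Quillen-type functor on the relevant lifting data, packaged through Yoneda.
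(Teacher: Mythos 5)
Your argument is correct and is exactly the one the paper intends: the paper's one-line proof (``because the unit $S$ is cofibrant, $\emptyset\to R^X\otimes S=R^X$ has the left lifting property with respect to $\cal S$'') is just a compressed version of your reduction via the enriched Yoneda lemma to lifting $\emptyset\to S$ against the trivial fibration $f(X)$ in $\cat V$. You have simply made explicit the adjunction/Yoneda step and the use of fibrancy of $X$ that the paper leaves implicit.
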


\begin{proof}
Because the unit $S$ of \cat V is cofibrant, it is easy to see that the map $\emptyset\to R^{ X}\otimes S=R^X$ has the left lifting property with respect to all maps in $\cal R$. 
\end{proof}

\begin{lemma}\label{I-cell-in-we}
Every relative $J_{\cat F}$-cell complex is a fibrant-projective weak equivalence.
\end{lemma}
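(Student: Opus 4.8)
The statement is used in item~(4) of the proof of Theorem~\ref{fibrant-projective} --- together with closure of fibrant-projective weak equivalences under retracts --- to see that every $J_{\cat F}$-cofibration is a fibrant-projective weak equivalence; accordingly the cell complexes at issue are the relative $J_{\cat F}$-cell complexes, and the plan is to prove that each of them is a fibrant-projective weak equivalence. By the definition of a fibrant-projective weak equivalence, it then suffices to show: for a relative $J_{\cat F}$-cell complex $f\colon F\to G$ in $\cat F$ and every fibrant object $K$ of $\cat K$, the map $f(K)\colon F(K)\to G(K)$ is a weak equivalence in $\cat V$.

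First I would push the cell structure of $f$ forward along the evaluation functor $\ev_{K}\colon\cat F\to\cat V$. The colimits --- transfinite compositions of pushouts of the generators $R^{X}\otimes C\to R^{X}\otimes D$ --- by which the generalized small object argument of \cite{pro-spaces} builds a relative $J_{\cat F}$-cell complex remain inside $\cat F$ and are formed objectwise there (cf.\ the cocompleteness statement \cite[Prop.~5.34]{Kelly}), so $\ev_{K}$ preserves them. Hence $f(K)=\ev_{K}(f)$ is again a relative cell complex, now assembled from the maps
\[
\ev_{K}\bigl(R^{X}\otimes C\longrightarrow R^{X}\otimes D\bigr)\;=\;\cat K(X,K)\otimes C\longrightarrow\cat K(X,K)\otimes D,
\]
with $(C\to D)\in J_{\cat V}$ and $X$ ranging over the fibrant objects of $\cat K$; here I use only the identity $\ev_{K}(R^{X}\otimes A)=R^{X}(K)\otimes A=\cat K(X,K)\otimes A$, the defining property of the objectwise tensoring in $\cat F$.

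Then I would invoke the monoid axiom. By symmetry of $\otimes$, each of the maps displayed above is isomorphic to $(C\to D)\otimes\cat K(X,K)$, that is, to a morphism of the form $j\otimes A$ with $j=(C\to D)\in J_{\cat V}$ a trivial cofibration of $\cat V$ and $A=\cat K(X,K)\in\mathrm{ob}\,\cat V$. Thus $f(K)$ is a relative cell complex built from generators of the class $\mathcal{E}_{\cat V}$ of Definition~\ref{monoid-axiom}, so $f(K)$ lies in $\mathcal{E}_{\cat V}$ and is a weak equivalence in $\cat V$, since $\cat V$ satisfies the monoid axiom. As $K$ was an arbitrary fibrant object, $f$ is a fibrant-projective weak equivalence.

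The one point that needs care is this transport step: one must check that the (possibly very long) transfinite colimits presenting a relative $J_{\cat F}$-cell complex genuinely live in $\cat F$ and are computed objectwise in the class-combinatorial setting, so that $\ev_{K}$ really does turn the cell structure of $f$ into a bona fide relative $\mathcal{E}_{\cat V}$-cell complex in $\cat V$, rather than merely into a composite of maps that individually happen to lie in $\mathcal{E}_{\cat V}$ (to which the monoid axiom would not directly apply). Once this bookkeeping is in place the conclusion is immediate; in particular no properness assumption on $\cat V$ enters. (Note that $J_{\cat F}$ cannot be replaced by $I_{\cat F}$ here: relative $I_{\cat F}$-cell complexes are precisely the fibrant-projective cofibrations, and those are not all weak equivalences.)
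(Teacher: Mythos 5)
Your proposal is correct and follows essentially the same route as the paper's own (very terse) proof: evaluate the relative cell complex on an arbitrary fibrant object of $\cat K$, note that the result is a relative cell complex on maps of the form $j\otimes A$ with $j\in J_{\cat V}$ a trivial cofibration and $A=\cat K(X,K)$, and invoke the monoid axiom in $\cat V$. Your reading of the statement as concerning relative $J_{\cat F}$-cell complexes is also the intended one: the ``$I_{\cat F}$'' in the statement is evidently a slip, as its use in item (4) of the proof of Theorem~\ref{fibrant-projective} (and the falsity of the literal statement for generating cofibrations) confirms.
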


\begin{proof} Because fibrant-projective weak equivalences can be detected by evaluating on fibrant objects, one easily verifies that the lemma follows from the monoid axiom that holds in \cat V.
\end{proof}

Now we present the crucial technical part in the proof of the existence of the fibrant-projective model structure. The generalized small-object argument, \cite{pro-spaces}, may be applied on a class of maps $\cal I$ satisfying certain co-solution set condition (see below), so that on each step of the transfinite induction we could attach one cofibration, through which all other maps in $\cal I$ factor.

\begin{lemma}\label{admit-gen-small-ob-arg}
The classes $I_{\cat F}$ and $J_{\cat F}$ admit the generalized small object argument.
\end{lemma}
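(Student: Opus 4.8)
The statement asserts that $I_{\cat F}$ and $J_{\cat F}$ admit the generalized small object argument of \cite{pro-spaces}. Since the two cases are formally identical, I will treat $I_{\cat F}$; the argument for $J_{\cat F}$ is verbatim the same with $J_{\cat V}$ in place of $I_{\cat V}$. By the definition of the generalized small object argument, I must verify two things: (a) the domains of the maps in $I_{\cat F}$ are \emph{small} relative to $I_{\cat F}$-cell complexes in an appropriate sense (so that transfinite composition can be run and detects factorizations), and (b) the class $I_{\cat F}$ satisfies the \emph{co-solution set condition}: for every map $g\colon G_1\to G_2$ in $\cat F$, there is a \emph{set} $S_g\subseteq I_{\cat F}$ such that every commutative square from a map in $I_{\cat F}$ to $g$ factors through a square from a map in $S_g$. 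Property (a) is the easy half: the domains are functors of the form $R^X\otimes A$ with $X\in\cat K^f$ and $A$ a domain of a map in the \emph{set} $I_{\cat V}$; since $\cat K$ is $\lambda$-combinatorial, $R^X$ is a small functor (it is Kan-extended from a one-object subcategory) and $A$ is $\lambda$-presentable in $\cat V$, so $R^X\otimes A$ is small relative to the relevant transfinite compositions — here one invokes that $\cat F$ is cocomplete (already established via \cite{Kelly} Prop.~5.34 in the proof of Theorem~\ref{fibrant-projective}) and that smash products with a small functor preserve the needed colimits.

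**The main step.** The heart of the matter is the co-solution set condition (b). Fix $g\colon G_1\to G_2$. A lifting square from $R^X\otimes A\hookrightarrow R^X\otimes B$ to $g$ is, by the enriched Yoneda lemma and the $\otimes$-$\hom$ adjunction in $\cat F$, the same datum as a square in $\cat V$ from $A\hookrightarrow B$ to the map $G_1(X)\to G_2(X)$. The index $X$ ranges over the \emph{proper class} $\cat K^f$, so naively $I_{\cat F}$ is a proper class and we cannot take all of it. The key observation — this is exactly the point of smallness — is that $G_1$ and $G_2$ are small functors, hence each is a left Kan extension from some small subcategory; so there is a \emph{regular cardinal} $\kappa=\kappa(g)$ and a \emph{set} $\cat K^f_\kappa$ of fibrant objects (the $\kappa$-presentable ones, intersected with a chosen small generating subcategory supporting both $G_1$ and $G_2$) such that the value $G_i(X)$ for arbitrary fibrant $X$ is a $\kappa$-filtered colimit of the values $G_i(X_\alpha)$ with $X_\alpha\in\cat K^f_\kappa$. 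Since the domain $A$ of a map in $I_{\cat V}$ is $\lambda$-presentable (and $\lambda\le\kappa$), any square from $A\hookrightarrow B$ into $G_1(X)\to G_2(X)$ factors through $G_1(X_\alpha)\to G_2(X_\alpha)$ for some $X_\alpha\in\cat K^f_\kappa$. Translating back, every square from a map in $I_{\cat F}$ to $g$ factors through a square from a map in the \emph{set}
\[
S_g=\bigl\{\,R^{X_\alpha}\otimes A\hookrightarrow R^{X_\alpha}\otimes B \;\bigm|\; (A\hookrightarrow B)\in I_{\cat V},\ X_\alpha\in\cat K^f_\kappa\,\bigr\},
\]
which is what the co-solution set condition requires. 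I would spell out the adjunction bijection $\cat F(R^X\otimes A,G)\cong\cat V(A,G(X))$ carefully, since it is the mechanism converting "functor of $X$" into "colimit over a small diagram".

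**The obstacle.** The genuine subtlety — and where care is needed — is that the cardinal $\kappa$ depends on the chosen map $g$, so the model structure is only \emph{class}-cofibrantly generated, not cofibrantly generated in the classical sense; one must check that the generalized small object argument of \cite{pro-spaces} is set up exactly to tolerate this (it is: that is its raison d'être), and that the "one cofibration attached at each stage through which all maps of $\cal I$ factor" can be built from the set $S_g$ at stage determined by the partial object constructed so far. A secondary point to handle cleanly: the partial objects arising during the transfinite construction are themselves small functors (being iterated pushouts of coproducts of the small functors $R^X\otimes B$), so the co-solution set condition can be re-applied at each successor stage with a possibly larger $\kappa$; monotone bookkeeping of these cardinals, and verifying the process stabilizes at a regular cardinal bounding the construction, is the routine-but-delicate part. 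No single step is deep; the work is in threading the enriched Yoneda isomorphism through the smallness of $G_1,G_2$ to produce an honest set at each stage.
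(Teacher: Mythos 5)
There is a genuine gap at the heart of your main step. You assert that, because $G_1,G_2$ are small, there is a cardinal $\kappa$ and a \emph{set} of \emph{fibrant} $\kappa$-presentable objects $X_\alpha$ such that $G_i(X)$ for an \emph{arbitrary} fibrant $X$ is a $\kappa$-filtered colimit of the values $G_i(X_\alpha)$ along maps $X_\alpha\to X$. Smallness of $G_1,G_2$ only buys accessibility of the functors (they preserve sufficiently filtered colimits); it says nothing about approximating an arbitrary fibrant $X$ by fibrant presentable objects. The canonical presentation of $X$ as a $\kappa$-filtered colimit of $\kappa$-presentable objects uses objects that need not be fibrant, and your parenthetical recipe (``the $\kappa$-presentable ones, intersected with a chosen small generating subcategory supporting $G_1$ and $G_2$'') does not help: the supporting subcategory controls where the functors are Kan-extended from, not the cofinality of fibrant presentables over $X$, and its intersection with the fibrant objects can be useless. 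Supplying exactly this ingredient is what the paper's Lemma~\ref{postponed} is for: it invokes the $\lambda$-accessible fibrant replacement functor of \cite[Prop.~2.3(iii)]{Dugger-presentation}, chosen so that $\hat X$ is $\mu$-presentable (and fibrant) whenever $X$ is $\mu$-presentable, writes $\hat X\cong\colim_i\hat X_i$ with $\hat X_i$ fibrant and $\mu$-presentable, and then treats an arbitrary fibrant $X$ as a \emph{retract} of $\hat X$. Note that even after this repair your statement must be weakened: $G_i(X)$ is only a retract of such a filtered colimit, not itself one, and the factorization of squares has to be threaded through the retraction (which does work, but is an argument you have not made).

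It is also worth noting that, beyond this gap, your route differs from the paper's. You try to factor every square from $R^{X}\otimes A\hookrightarrow R^{X}\otimes B$ into $f$ directly through generating maps indexed by fibrant presentable objects, and would then take the co-solution map to be a coproduct of honest members of $I_{\cat F}$. The paper instead forms, for each generating cofibration $A\to B$, the pullback functor $W=F^{A}\times_{G^{A}}G^{B}$, builds a cofibrant replacement $\widetilde W\to W$ by the ordinary small object argument on the set $I_W$ of maps $R^{\hat X}\otimes A\to R^{\hat X}\otimes B$ with $X$ $\mu$-presentable (this is where the accessible fibrant replacement and the retract argument live), lifts the adjoint map $R^{X}\to W$ through $\widetilde W\to W$ using that representables at fibrant objects are cofibrant (Lemma~\ref{cofibrant representables}), and takes $g=\coprod_{A\to B}\bigl(\widetilde W\otimes A\to\widetilde W\otimes B\bigr)$, verifying $g\in I_{\cat F}$-cof by an exponential/pullback lifting argument that uses the monoidal model structure. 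If you filled the gap above (e.g., via the accessible fibrant replacement, or by appealing to accessibility of the injectivity class of fibrant objects), your more direct verification of a co-solution set could be made to work; as written, however, the one step you take for granted is precisely the nontrivial content of the lemma.
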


\begin{proof}
Since the domains and codomains of the maps in $I_{\cat V}$ and $J_{\cat V}$ are $\lambda$-presentable, so are the maps in $I_{\mc{F}}$ and $J_{\mc{F}}$. It remains to show that $I_{\mc{F}}$ and $J_{\mc{F}}$ satisfy the following co-solution set condition:

(CSSC): Every map $f\colon F\to G$ in $\mc{F}$ may be equipped with a commutative square
\[
\xymatrix{
C \ar[r]\ar@{^{(}->}[d]_{g}& F\ar[d]^{f}\\
D \ar[r]& G,
}
\]
so that $g\in I_{\mc{F}}\text{-cof}$ (resp. $g\in J_{\mc{F}}\text{-cof}$) and every morphism of maps $i\to f$ with $i\in I_{\cat F}$ (resp. $i\in J_{\cat F}$) factors through $g$.

We will prove this condition in the first case, where we construct $g\in I_{\mc{F}}\text{-cof}$. The second case with $g\in J_{\mc{F}}\text{-cof}$ will be dealt with in brackets along the way.
For the proof of (CSSC) we consider a morphism of maps $i\to f$ for some $i\in I_{\cat F}$ (resp. $i\in J_{\cat F}$)  and arbitrary $f$ in \cat F as above. Let the diagram
\begin{equation}\label{i to f}
\xymatrix{
R^{X}\otimes A \ar[r]\ar@{^{(}->}[d]_{i}& F\ar[d]^{f}\\
R^{X}\otimes B \ar[r]& G
}
\end{equation}
be this morphism. Here $A\to B$ is in $I_{\cat V}$ (resp. $J_{\cat V}$) and $X$ is a fibrant object in $\cat K$.
By adjunction, this square corresponds to the following commutative diagram of solid arrows:
\begin{equation}\label{pullback W}
\xymatrix@=20pt{
R^{X} \ar@/^1pc/[drr]\ar@/_1pc/[ddr] \ar@{..>}[rd]|\varphi\\
					&	W\ar@{-->}[r]\ar@{-->}[d]	& F^{A}\ar[d]^{f^{A}}\\
					&G^ B \ar[r] & G^{A}.
}
\end{equation}
where $W=F^{A}\times_{G^{A}}G^{B}$ is the pullback and $\varphi$ is the universal map.
We claim that for such $W$ in (\ref{pullback W}) there exists a map $p\colon\widetilde{W}\tilde \fibr W$ where $p\in I_{\mc{F}}\text{-inj}$, and the canonical map $\emptyset\to\widetilde{W}$ is in $I_{\mc{F}}\text{-cof}$. In other words, $\widetilde{W}$ is a cofibrant replacement of $W$ in the yet to be constructed fibrant-projective model structure.
The proof of this claim will be postponed to Lemma \ref{postponed}.
We proceed with the proof that property (CSSC) holds.

The map $\varphi$ lifts along the map $\widetilde{W}\to W$ by Lemma \ref{cofibrant representables}.
Unrolling the adjunction, we find that the morphism $i\to f$ from (\ref{i to f}) factors through the map
  $$w_{A\to B}\colon\widetilde{W} \otimes A \cofib \widetilde{W} \otimes B,$$
which is in $I_{\cat F }$-cof (resp. $J_{\cat F}$-cof) as we are now going to prove.
We choose the required map $g\colon C\cofib D$ to be
\[
g=\coprod_{\vertmap A B \in I_{\cat V}} w_{A\to B} \,\,\,\,\text{and}\,\,\,\,\quad \left(\text{resp.}\,\coprod_{\vertmap A B \in J_{\cat V}} w_{A\to B} \right)
\]
We need finally to show that $g\in I_{\mc{F}}\text{-cof}$ (resp. $g\in J_{\mc{F}}\text{-cof}$). It suffices to show
  $$w_{A\to B}\in I_{\mc{F}}\text{-cof}\,\,\,\, ({\rm resp.}\, w_{A\to B}\in J_{\mc{F}}\text{-cof})$$
for each $w_{A\to B}\co\widetilde{W}\otimes A\to\widetilde{W}\otimes B$ from above.
So, let $q\colon M\to N$ be an arbitrary map in $\cal S=I_{\cat F}{\rm -inj}$ (resp. $\cal T=J_{\cat F}{\rm -inj}$). Consider any commutative square as follows:
\begin{equation}\label{lift-between-w-and-q}
\xymatrix{
\widetilde{W} \otimes A\ar@{^{(}->}[d]_{w_{A\to B}} \ar[r] & M\ar@{->>}[d]^{q}\\
\widetilde{W} \otimes B\ar[r] \ar@{..>}[ur]& N
}.
\end{equation}
We claim this diagram admits a dotted lift. We actually construct a dotted arrow in the following adjoint solid arrow diagram
\[
\xymatrix@=14pt{
\widetilde{W} \ar@/_1pc/[dddr] \ar@/^1pc/[rrrd] \ar@{..>}[dr]\\
                   & M^{B}\ar[dd]\ar[rr] \ar@{->>}[dr]^{\dir{~}}   &   & M^{A}\ar@{=}[d]\\
                   &                              & P\ar[r]\ar[d]& M^{A}\ar[d]
\\
                   & N^{B}\ar@{=}[r]      & N^{B}\ar[r]        & N^{A},
}
\]
where $P=M^{A}\times_{N^{A}}N^{B}$ denotes the pullback. The induced map $M^{B}\trivfibr P$ is in $\cal S$, which can be checked by evaluating on fibrant objects of $\cal K$ because the model structure on $\cat V$ is monoidal and we are in one of the following cases:
\begin{enumerate}
   \item
The cofibration $A\cofib B$ is a weak equivalence in \cat V. This is the case inside the brackets above;
   \item
The map $q$ is in $\cal S=I_{\cat F}$-inj and hence a trivial fibration when evaluated on fibrant objects. This is the case outside the brackets above.
\end{enumerate}
The dotted arrow exists because we get a lift to $P$ by its universal property and then a lift to $M^B$ since $\emptyset\to\widetilde{W}$ is in $I_{\mc{F}}\text{-cof}$. This corresponds to the lift in the original square (\ref{lift-between-w-and-q}) finishing the proof of property (CSSC).
\end{proof}

\begin{definition}\label{Kmu}
The full subcategory of $\cat K$ given by the $\mu$-presentable objects will be denoted by $\cat K_{\mu}$. 
\end{definition}

In the previous proof, we have used the following
\begin{lemma}\label{postponed}
For each $W$ in diagram $(\ref{pullback W})$ the canonical map $\emptyset\to W$ can be factored into a map $\emptyset\to\widetilde{W}$ in $I_{\mc{F}}$\textup{-cof} followed by $\widetilde{W}\to W$ in $\cal S=I_{\mc{F}}$\textup{-inj}.
\end{lemma}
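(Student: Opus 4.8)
The plan is to produce $\widetilde W$ by transporting, along a left Kan extension, a cofibrant replacement computed in a category of functors on a \emph{small} subcategory of $\cat K$, where the fibrant-projective theory is ordinarily cofibrantly generated, so that no circular appeal to the model structure of Theorem~\ref{fibrant-projective} is needed. Since $W=F^A\times_{G^A}G^B$ is a limit in the category $\cat F$ of small functors, which is complete \cite{Lack}, the object $W$ is itself small and hence, again by \cite{Lack}, is the left Kan extension of its restriction to some small full subcategory $\cat C\subseteq\cat K$. First I would enlarge $\cat C$ to a small full subcategory $\cat D\subseteq\cat K$ closed under a fixed functorial fibrant replacement $X\mapsto\widehat X$, together with the accompanying acyclic cofibrations $X\trivcofib\widehat X$; closing the object set under $\widehat{(-)}$ is a countable union of sets, so $\cat D$ stays small, and since $\cat D\hookrightarrow\cat K$ is fully faithful (so restriction to $\cat D$ is a left inverse to left Kan extension) one still has $W\cong\Lan_\iota(W|_{\cat D})$ for $\iota\colon\cat D\hookrightarrow\cat K$. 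Write $\cat D^{f}$ for the set of objects of $\cat D$ that are fibrant in $\cat K$.

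Because $\cat D$ is small, the fibrant-projective structure on $\cat V^{\cat D}$ — weak equivalences and fibrations detected on $\cat D^{f}$, cofibrations by the left lifting property — is an \emph{ordinary} cofibrantly generated model category, generated by the \emph{set} $\{\,\cat D(X,-)\otimes i\mid X\in\cat D^{f},\ i\in I_{\cat V}\text{ (resp. }i\in J_{\cat V})\,\}$, whose domains are presentable. Its construction is the proof of Theorem~\ref{fibrant-projective} with Kan's recognition theorem \cite{Hirschhorn} in place of the generalized small object argument: the identification of the right-lifting classes is the evident analogue of Lemma~\ref{right-lifting-properties}, and that cells on the generating trivial cofibrations are weak equivalences is the monoid axiom~\ref{monoid-axiom}, exactly as in Lemma~\ref{I-cell-in-we}. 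Crucially, the present lemma is not invoked here, so there is no circularity. Now choose a cofibrant replacement $q\colon\widetilde{W_0}\to W|_{\cat D}$ in $\cat V^{\cat D}$ and set $\widetilde W:=\Lan_\iota\widetilde{W_0}$, with $\widetilde W\to W$ obtained as $\Lan_\iota(q)$ via $\Lan_\iota(W|_{\cat D})\cong W$. Since $\Lan_\iota$ is cocontinuous, fully faithful, and carries $\cat D(X,-)\otimes i$ to $R^{X}\otimes i$ for $X\in\cat D^{f}$ — hence to a map in $I_{\mc F}$ by Definition~\ref{generators-for-fib-proj} — it turns the cell presentation of $\emptyset\to\widetilde{W_0}$ into one exhibiting $\emptyset\to\widetilde W$ as an $I_{\mc F}$-cell complex; thus $\emptyset\to\widetilde W\in I_{\mc F}$\textup{-cof}, as required.

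The remaining, and essential, point is that $\widetilde W\to W$ lies in $\cal S=I_{\mc F}$\textup{-inj}, i.e. is a trivial fibration on \emph{every} fibrant $Y\in\cat K$ and not merely on the objects of $\cat D^{f}$. As $\widetilde W=\Lan_\iota\widetilde{W_0}$, $W=\Lan_\iota(W|_{\cat D})$, and the map between them is $\Lan_\iota(q)$, for fibrant $Y$ the map $\widetilde W(Y)\to W(Y)$ is the map of coends $\int^{d\in\cat D}\cat K(d,Y)\otimes\widetilde{W_0}(d)\to\int^{d\in\cat D}\cat K(d,Y)\otimes W(d)$ induced by $q$. Here one uses that, $Y$ being fibrant and $d\trivcofib\widehat d$ an acyclic cofibration in the $\cat V$-model category $\cat K$, the map $\cat K(\widehat d,Y)\to\cat K(d,Y)$ is a trivial fibration in $\cat V$; this should let one replace the weight $\cat K(-,Y)|_{\cat D}$, up to objectwise weak equivalence, by one factoring through the fibrant-replacement endofunctor of $\cat D$, reducing both coends to coends over $\cat D^{f}$, on which $q$ is already a weak equivalence, whence $\widetilde W(Y)\to W(Y)$ is a weak equivalence — indeed a trivial fibration since $q$ is. The hard part is to make this reduction homotopically valid: one must show that the coends in question compute homotopy colimits and are insensitive to objectwise trivial-fibration changes of weight and diagram, and I expect this to follow from the monoid axiom~\ref{monoid-axiom} (governing the attached trivial-cofibration cells, as in Lemma~\ref{I-cell-in-we}) together with the strong left properness of $\cat V$ of Definition~\ref{Def. strongly left proper} (governing the cobase changes along the remaining cells), in the spirit of \cite{DRO:enriched}; it may also be necessary to take $\cat D$ somewhat larger, e.g. closed as well under the finitely many cotensors and pullbacks entering the definition of $W$. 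An equivalent and perhaps more transparent route is to bypass the model structure on $\cat V^{\cat D}$ and instead run the ordinary small object argument on $\emptyset\to W$ in $\cat F$ using only the \emph{set} of maps $\{\,R^{X}\otimes i\mid X\in\cat D^{f},\ i\in I_{\cat V}\,\}$, which yields the same $\widetilde W$ and $q$, and to carry out the extension-to-all-fibrant-$Y$ argument there.
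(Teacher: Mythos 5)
Your first half (the cell structure of $\emptyset\to\widetilde W$) is fine and essentially agrees with the paper: the paper also runs the ordinary small object argument on $\emptyset\to W$ against a \emph{set} of generators $R^{\hat X}\otimes A\to R^{\hat X}\otimes B$ indexed by presentable $X$, and your alternative route at the end of the proposal is exactly this. But the heart of the lemma is the second half, and there you have a genuine gap which you yourself flag: you never prove that $\widetilde W\to W$ lies in $\cal S$, i.e.\ is a trivial fibration on \emph{every} fibrant object of $\cat K$, rather than only on the fibrant objects of your chosen small subcategory $\cat D$. The coend/weight-replacement sketch does not close it: homotopy invariance of the weighted colimit in the diagram variable would require cofibrancy that $W|_{\cat D}$ does not have, the reduction of both coends ``to coends over $\cat D^{f}$'' is not justified by the pointwise trivial fibrations $\cat K(\hat d,Y)\to\cat K(d,Y)$ alone, and even if it worked it would only yield an objectwise weak equivalence, whereas the application of Lemma~\ref{postponed} in the proof of the co-solution set condition needs the right lifting property against $I_{\cat F}$ (membership in $\cal S$), i.e.\ genuinely a trivial fibration, so that maps out of the cofibrant objects $R^X$ can be lifted. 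Appeals to the monoid axiom and strong left properness are not the right tools for this step.

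The paper's mechanism for this extension is different and purely accessibility-theoretic, and none of its ingredients appear in your argument: choose $\mu$ so that the ($\lambda$-accessible) fibrant replacement functor $\hat{(-)}$ preserves $\mu$-presentability and so that $W$ (a colimit of a set of functors $R^{K}\otimes M$ with $K$ $\mu$-presentable) is $\mu$-accessible; then $\widetilde W$, built from cells $R^{\hat X}\otimes A$ with $X\in\cat K_{\mu}$, is also $\mu$-accessible, and the small object argument gives that $p(\hat X)$ is a trivial fibration for all $X\in\cat K_{\mu}$. For arbitrary $X\in\cat K$ one writes $X\cong\colim_i X_i$ as a $\mu$-filtered colimit of $\mu$-presentables, uses accessibility of $\hat{(-)}$, $W$ and $\widetilde W$ to identify $p(\hat X)$ with the $\mu$-filtered colimit of the trivial fibrations $p(\hat X_i)$, and uses closure of trivial fibrations under filtered colimits; finally, a fibrant $X$ is a retract of $\hat X$, so $p(X)$ is a retract of $p(\hat X)$ and hence a trivial fibration. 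Your proposal would become correct if you replaced the coend argument by this accessibility-plus-retract argument (your closure of $\cat D$ under a functorial fibrant replacement is a step in the right direction, but by itself it says nothing about fibrant objects of $\cat K$ outside $\cat D$).
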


\begin{proof}
By assumptions, $\cat V$ and $\cat K$ are $\lambda$-combinatorial model categories. We know, by \cite[Prop.~2.3(iii)]{Dugger-presentation}, that there exists a $\lambda$-accessible fibrant replacement functor in $\cat K$ denoted by $\hat{-}$, such that for every sufficiently large regular cardinal $\mu \unrhd \lambda$ and for every $\mu$-presentable object $X$, $\hat X$  is also $\mu$-presentable. We fix this cardinal $\mu$. Here we have chosen $\mu \unrhd \lambda$ so, that every $\lambda$-accessible category is also $\mu$-accessible, \cite{Makkai-Pare}.

The functor $W$ is small. In other words, it is a left Kan extension of a functor defined on a small subcategory $\cat K_W$ of $\cat K$. Alternatively,  we can write $W$ as a weighted colimit of a diagram of representable functors $\cal R\colon \cat K_W^{\op} \to \cat V^{\cat K}$, $\cat K_W \ni K\mapsto R^K$ with a functor of weights $\cal M\colon \cat K_W\to \cat V$. The full image of $\cal M$ is an essentially small subcategory of $\cat V$ denoted by $\cat V_W$. Therefore, $W$ is a colimit of a \emph{set} of functors
   $$\{R^{K}\otimes M\,|\, K\in \cat K_{W},\, M\in\cat V_{W}\}.$$

Enlarging $\mu$ if necessary, we ensure that every $K\in\cat K_{W}$ is $\mu$-presentable. Then every $R^{K}$ is a $\mu$-accessible functor. Hence, $W$ is a $\mu$-accessible functor as a colimit of $\mu$-accessible functors. In order to construct the stated factorization, we apply to the map $\emptyset\to W$ the ordinary small object argument on the following set of maps:
\[
I_{W}=
\left\{R^{\hat X}\otimes A\hookrightarrow  R^{\hat X}\otimes B \,\left|\, \vertmap A B \in I_{\cat V},\, X\in{\cat K}_{\mu}
\right.\right\},
\]
where $\cat K_{\mu}$ is, as in Definition~\ref{Kmu}, the full subcategory of $\cat K$ given by the $\mu$-presentable objects. 
Note, that by the choice of $\mu$, $\hat{X}\in \cat K_{\mu}$ for all $X\in \cat K_{\mu}$.

The map $\emptyset\to\widetilde{W}$ is then in $I_{W}{\text{-cell}}\subset I_{\cat F}$-cof. The natural transformation of functors $p\colon\widetilde{W}\to W$ has the property that for all $X\in \cat K_{\mu}$ the map 
  $$p(\hat X)\colon \widetilde{W}(\hat X)\tilde \fibr W(\hat X)$$ 
is a trivial fibration in \cat V. We need to show that $p\in I_{\cat F}$-inj, i.e. that it is a trivial fibration on all fibrant $X$.

Since $\hat{X}\in \cat K_\mu$ for all $X\in \cat K_{\mu}$, the functors $R^{\hat X}$ are $\mu$-accessible for all $X\in \cat K_{\mu}$. Hence, the functor $\widetilde W$ is also a $\mu$-accessible functor as a colimit of $\mu$-accessible functors.

Since we have chosen $\mu\unrhd \lambda$, we obtain that $\cat K$ is a locally $\mu$-presentable category. Hence, every $X\in \cat K$ is a $\mu$-filtered colimit of $X_{i}\in \cat K_{\mu}$. Therefore $\hat X\cong \colim_{i} \hat X_{i}$, since the fibrant replacement was chosen to be $\lambda$-accessible, which means that it is also $\mu$-accessible. Then, the map
  $$p(\hat X)\colon \widetilde{W}(\hat X)\to  W(\hat X)$$ 
is a $\mu$-filtered colimit of trivial fibrations 
  $$p(\hat X_{i})\colon \widetilde{W}(\hat X_{i})\tilde \fibr W(\hat X_{i})$$ 
with $X_{i}\in K_{\mu}$, since both $W$ and $\widetilde W$ are $\mu$-accessible functors. Therefore, all the maps $p(\hat X)$, $X\in\cat K$ are trivial fibrations.

Given a fibrant object $X\in \cat K$, it is a retract of its fibrant replacement $\hat X$. Therefore, the map $p(X)$ is a retract of $p(\hat X)$ by naturality of $p$, i.e. $p(X)$ is a trivial fibration for all fibrant $X$. We conclude that $p$ is in \cal S.
\end{proof}

We have completed the proof that the fibrant-projective model structure on the category of small functors exists. Now we show that it is equipped with an additional structure of a $\cat V$-model category.

\begin{proposition}\label{V-model-structure}
The fibrant-projective model structure makes $\cat V^\cat K$ into a $\cat V$-model category.
\end{proposition}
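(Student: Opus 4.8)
The plan is to verify the pushout-product (SM7) axiom for the tensoring of $\cat V^{\cat K}$ over $\cat V$, since the cotensor and hom were already described in Section~\ref{adjunction} and the adjoint form of SM7 will follow formally. Recall that $\cat F = \cat V^{\cat K}$ is tensored over $\cat V$ by $(F\otimes A)(K) = F(K)\otimes A$ computed objectwise, and that this tensoring is compatible with the $\cat V$-enrichment by the end formula for $\cat F(F,G)$. So the substantive claim is: if $i\colon A\to B$ is a cofibration in $\cat V$ and $j\colon F\to G$ is a fibrant-projective cofibration in $\cat F$, then the pushout-product
\[
  i\,\square\, j\colon (A\otimes G)\amalg_{A\otimes F}(B\otimes F)\longrightarrow B\otimes G
\]
is a fibrant-projective cofibration, which is trivial if either $i$ or $j$ is.

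First I would reduce the statement about cofibrations to a lifting check against the classes $\cal S$ and $\cal T$ of Lemma~\ref{right-lifting-properties}. Since fibrant-projective (trivial) cofibrations are exactly the maps with the left lifting property against $\cal T$ (resp.\ $\cal S$), and since $-\otimes A$ has a right adjoint $(-)^A$ (the cotensor, computed objectwise), the pushout-product $i\,\square\,j$ lifts against a map $q\colon M\to N$ in $\cal T$ (resp.\ $\cal S$) iff $j$ lifts against the pullback-hom $q^{\square i}\colon M^B\to M^A\times_{N^A}N^B$. Thus it suffices to show that $q^{\square i}$ lands in $\cal T$ (resp.\ $\cal S$) under the appropriate hypotheses. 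But this can be checked by evaluating on a fibrant object $X\in\cat K$: there $q^{\square i}(X)$ is precisely the pullback-hom in $\cat V$ of the map $q(X)\colon M(X)\to N(X)$ against $i\colon A\to B$, and $q(X)$ is a fibration (resp.\ trivial fibration) in $\cat V$ by definition of $\cal T$ (resp.\ $\cal S$). Now the SM7 axiom for the monoidal model category $\cat V$ itself gives that this pullback-hom in $\cat V$ is a fibration, and is trivial if $i$ or $q(X)$ is. This is exactly the same pullback-hom computation already performed inside the proof of Lemma~\ref{admit-gen-small-ob-arg} (the diagram with $P = M^A\times_{N^A}N^B$), so the argument is available essentially verbatim.

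It remains to handle the unit axiom: cofibrant replacement of the unit $S\in\cat V$ should tensor to a weak equivalence, but here $S$ is already cofibrant by hypothesis, and $F\otimes S\cong F$ naturally, so there is nothing to check. Assembling these pieces: $i\,\square\,j$ has the left lifting property against all of $\cal T$ (hence is a fibrant-projective cofibration), and against all of $\cal S$ whenever $i$ or $j$ is trivial (hence is then a fibrant-projective trivial cofibration). The main obstacle is purely bookkeeping: one must be careful that the cotensor $M^A$ in $\cat F$ really is computed objectwise and that the $\cat V$-adjunction $(-\otimes A)\dashv (-)^A$ interacts correctly with evaluation at a fixed object $X\in\cat K$, i.e.\ that $(M^A)(X)\cong M(X)^A$ and $(M^B)\times_{M^A}\cdots$ evaluated at $X$ is the pullback in $\cat V$ of the evaluated maps; once that compatibility is recorded, everything reduces to SM7 in $\cat V$ together with the fact that $\cal S$ and $\cal T$ are detected on fibrant objects of $\cat K$.
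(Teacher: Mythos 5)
Your overall strategy---transpose the pushout-product axiom to the pullback-hom, observe that cotensors in $\cat V^{\cat K}$ are computed objectwise, and then invoke SM7 of $\cat V$ on fibrant values of $\cat K$---is sound, and it is a more direct route than the paper's, which instead deduces the proposition from Lemma~\ref{cofibrant representables} by cellular induction over the generating cells $R^{X}\otimes A\to R^{X}\otimes B$ followed by a retract argument. However, your bookkeeping between $\cal S$ and $\cal T$ is systematically swapped, and the swap is not harmless. In the fibrant-projective structure the cofibrations are exactly the maps with the left lifting property against $\cal S=I_{\cat F}\text{-inj}$ (the trivial fibrations), and the trivial cofibrations are those with the left lifting property against $\cal T=J_{\cat F}\text{-inj}$ (the fibrations); you assert the opposite, and your final assembly claims that $i\,\square\,j$ ``has the left lifting property against all of $\cal T$, hence is a fibrant-projective cofibration.'' That is wrong on both counts: lifting against all of $\cal T$ would make $i\,\square\,j$ a \emph{trivial} cofibration, and a pushout-product of two non-trivial cofibrations does not in general lift against all fibrations. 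Dually, lifting against $\cal S$ when $i$ or $j$ is trivial only certifies a cofibration, not the trivial cofibration SM7 requires.

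Concretely, the three cases should run as follows: for $q\in\cal S$ and $i,j$ arbitrary cofibrations, $q^{\square i}(X)$ is a trivial fibration for every fibrant $X$ by SM7 in $\cat V$, so $q^{\square i}\in\cal S$ and the cofibration $j$ lifts against it, giving $i\,\square\,j$ the left lifting property against $\cal S$, i.e.\ it is a cofibration; for $q\in\cal T$ and $i$ trivial, $q^{\square i}\in\cal S$ and again the cofibration $j$ lifts; for $q\in\cal T$ and $j$ trivial but $i$ not, $q^{\square i}$ lies only in $\cal T$, and here you must use that a trivial cofibration lifts against fibrations---a step your write-up never invokes, so this case is genuinely not established as the argument stands. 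With the pairings corrected (and a remark that the objectwise cotensor of a small functor is again small, being a composite of accessible functors, which is what makes the adjunction between $-\otimes A$ and $(-)^{A}$ available in $\cat F$, exactly as in the diagrams of Lemma~\ref{admit-gen-small-ob-arg}), your proof does go through, and you are right that the unit axiom is vacuous because $S$ is assumed cofibrant.
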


\begin{proof}
We will show that for any cofibration $i\colon A\cofib B$ and for any fibration $p\colon X\fibr Y$ in $\cat V^{\cat K}$, the induced map $\hom_\square(i,p)\colon \hom(B, X)\to \hom(A,X)\times_{\hom(A, Y)}\hom(B,Y)$ is a fibration. Moreover, $\hom_\square(i,p)$ is a weak equivalence if either $i$, or $p$ is. 

The retract argument shows that it suffices to prove the statement for cellular cofibrations. We proceed by induction on the construction of the cellular (trivial) cofibration  $i$. Suppose for induction that $B_{0}=A$ and the statement is true for all cardinals smaller than $d$. If $d$ is a successor cardinal, then there is a pushout square  
\[
\xymatrix{
R^{Z}\otimes K
\ar@{^(->}[d]_{R^X\otimes j}
\ar[r]
	& B_{d-1}
	   \ar@{^(->}[d]^{i_d}\\
R^{Z}\otimes L
\ar[r]
	& B_{d}
}
\]
with $j\colon K\cofib L$ in $I_{\cat V}$ (resp., in $J_{\cat V}$) and $Z\in \cat K^{f}$.

We apply $\hom_\square(-,p)$ to the above pushout square, obtaining the following commutative diagram with the left and the right vertical faces being pullback squares.
\[
\xymatrix{
	&	&	&	X(Z)^L%
				\ar	'[d]+<0pt,-10pt>*{\hole}
					'[d]+<0pt,-25pt>*{\hole}
					[dd]
				\ar[rr]
				\ar@{..>}[dr]	&	& 	Y(Z)^L
								\ar[dd]\\
	&	&	&	&	P
					\ar[ur]
					\ar[dl]\\
X^{B_d}
\ar[rr]
\ar[dd]
\ar[uurrr]
\ar@{-->}[dr] &	& Y^{B_d}
				\ar[dd]
				\ar[uurrr]
								&	X(Z)^K
									\ar[rr] &	&	 Y(Z)^K\\
	& Q
	  \ar[ur]
	  \ar[dl]
	  \ar[uurrr]|!{[ur];[dr]}\hole &	&	&\\
X^{B_{d-1}}
\ar[rr]
\ar[uurrr]|!{[uurr];[rr]}\hole &	& Y^{B_{d-1}}
										\ar[uurrr]
}
\]
Let $P=X(Z)^K\times_{Y(Z)^K}Y(Z)^L$ and $Q=X^{B_{d-1}}\times_{Y^{B_{d-1}}}Y^{B_d}$, then also $Q=X^{B_{d-1}}\times_{Y(Z)^K}Y(Z)^L$ as a concatenation of two pullback squares. Applying \cite[Proposition~7.2.14(2)]{Hirschhorn} twice, we conclude, first, that  $Q=X^{B_{d-1}}\times_{X(Z)^K}P$, and next, that $X^{B_d}=Q\times_P X(Z)^L$. 

Then $\hom_\square(i_d,p)\colon \hom(B_{d}, X)\to \hom(B_{d-1},X)\times_{\hom(B_{d-1}, Y)}\hom(B_{d},Y)$ (the dashed map in the front face of the diagram above) is a (trivial) fibration as a base change of the (trivial) fibration 
\[
\hom_\square(R^Z\otimes j,p)\colon \hom(R^Z\otimes L, X)\to \hom(R^Z\otimes K,X)\times_{\hom(R^Z\otimes K, Y)}\hom(R^Z\otimes L,Y)
\]
(the dotted map in the back face of the diagram above). The latter map is a (trivial) fibration, since, by adjunction, it is equal to 
\[
\hom_\square(j,p^{R^Z})\colon \hom(L,X(Z))\to \hom(K,X(Z))\times_{\hom(K, Y(Z))}\hom(L,Y(Z)),
\]
which is a trivial fibration by the analog of SM7(b) in the closed symmetric monoidal model category \cat V.

Now consider the following commutative diagram computing $\hom_\square(i_d\ldots i_2i_1,p)$.
\[
\xymatrix@=12pt{
X^{B_d}
\ar[dr]
\ar[dddd]
\ar[rrrr]	&	&	&	& X^{B_{d-1}}
							\ar[dr]
							\ar[dddd]
							\ar[rrrr]	&	&	&	&X^A
														\ar[dddd]\\
	&	Q
		\ar[dr]
		\ar[urrr]
		\ar[lddd]	&	&	&			&	P
											\ar[dddl]
											\ar[urrr]	&		\\
		&	&	Q'
				\ar[urrr]
				\ar[ddll]	&	&			&	\\
\\
Y^{B_d}
\ar[rrrr]	&	&	&	& Y^{B_{d-1}}
							\ar[rrrr]	&	&	&	& Y^A
}
\]
In this diagram $P=Y^{B_{d-1}}\times_{Y^A}X^A$, $Q=Y^{B_d}\times_{Y^{B_{d-1}}}X^{B_{d-1}}$, and $Q'=Y^{B_d}\times_{Y^A}X^A$.

We need to show that the natural map $\hom_\square(i_d\ldots i_2i_1,p)\colon X^{B_d}\to Q'$ is a (trivial) fibration. But the map $\hom_\square(i_d,p)\colon X^{B_d}\to Q$ is a (trivial) fibration by the previous argument (this is the dashed map in the previous diagram), hence, it is sufficient to show that the induced map $Q\to Q'$ is a (trivial) fibration.

Applying \cite[Proposition~7.2.14(2)]{Hirschhorn} twice, we conclude, first, that $Q'= Y^{B_d}\times_{Y^{B_{d-1}}}P$, and next, that $Q=Q'\times_P X^{B_{d-1}}$.

The natural map $\hom_\square(i_{d-1},p)\colon X^{B_{d-1}}\to P$ is a (trivial) fibration by the inductive assumption, hence $Q\to Q'$ is a (trivial) fibration as a base change of $\hom_\square(i_{d-1},p)$.

Continuing this process we conclude that $\hom_\square(i_{d}\ldots i_1,p)$ is a (trivial) fibration as a transfinite inverse composition of (trivial) fibrations also in the case that $d$ is a limit cardinal. Therefore, $\hom_\square(i,p)$ is a (trivial) fibration.
\end{proof}

\section{Models of spectra}\label{models-for-spectra}

We want to apply Theorem~\ref{fibrant-projective} to a model for the stable homotopy category of spectra. Therefore, we need to demonstrate that there are models that satisfy all assumptions.
The model category of $S$-modules from \cite{EKMM} cannot be used here since its unit for the monoidal structure is not cofibrant.

Symmetric spectra over simplicial sets constructed by Hovey/Shipley/Smith~\cite{HSS} serve as an acceptable model for us. The monoid axiom~\ref{monoid-axiom} is proved in \cite[section 5.4]{HSS}. Strong left properness~\ref{Def. strongly left proper} is not explicitly stated. We prove it now.
\begin{lemma}
The stable model structure on symmetric spectra over simplicial sets is strongly left proper.
\end{lemma}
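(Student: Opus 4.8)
The plan is to show that every map in the class $\mathcal{D}_{\mc V}$ of Definition~\ref{Def. strongly left proper} — here $\mc V$ is the category $\mathrm{Sp}^{\Sigma}$ of symmetric spectra over simplicial sets — is a levelwise monomorphism, and then to deduce strong left properness from the left properness of the stable model structure in its \emph{injective} incarnation.

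For the first point, levelwise monomorphisms of symmetric spectra are closed under pushout, transfinite composition and retract (pushouts are computed levelwise and $\mathrm{sSet}$ is a topos), while $-\otimes A=-\wedge A$ preserves colimits and retracts; hence it suffices to show that $i\otimes A$ is a levelwise monomorphism whenever $i=F_n(\partial\Delta^m_+\hookrightarrow\Delta^m_+)$ is a generating cofibration and $A$ is an arbitrary symmetric spectrum. Using the natural isomorphism $F_n(K)\cong F_n(S^0)\wedge K$ one rewrites $i\otimes A\cong F_n(S^0)\wedge\bigl((\partial\Delta^m_+\hookrightarrow\Delta^m_+)\otimes A\bigr)$, and $(\partial\Delta^m_+\hookrightarrow\Delta^m_+)\otimes A$ is the levelwise monomorphism $(\partial\Delta^m_+\wedge A_p\hookrightarrow\Delta^m_+\wedge A_p)_p$. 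The key computation is that smashing with the free spectrum $F_n(S^0)$ is, in level $l$, naturally isomorphic to the functor $B\mapsto(\Sigma_l)_+\wedge_{\Sigma_{l-n}}B_{l-n}$, which on underlying pointed simplicial sets is merely a finite wedge of copies of $B_{l-n}$ indexed by $\Sigma_l/\Sigma_{l-n}$; such a functor preserves monomorphisms. Thus $i\otimes A$, and therefore every relative $\mathcal{D}_{\mc V}$-cell complex, is a levelwise monomorphism.

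For the second point, I would use the level injective model structure on $\mathrm{Sp}^{\Sigma}$, in which the weak equivalences are the levelwise weak equivalences and the cofibrations are the levelwise monomorphisms; it is combinatorial, and it is left proper because $\mathrm{sSet}$ is left proper and pushouts are formed levelwise. The stable model structure arises from it by left Bousfield localization at the stabilization maps $\{F_{n+1}(S^1)\to F_n(S^0)\}_{n\geq0}$: its weak equivalences are exactly the stable equivalences of \cite{HSS} (those tested against injective $\Omega$-spectra), and its cofibrations are still the levelwise monomorphisms. Since a left Bousfield localization of a left proper combinatorial model category is again left proper, the cobase change of a stable equivalence along a levelwise monomorphism is a stable equivalence. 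Combining the two points yields the claim: any cobase change of a weak equivalence along a map of $\mathcal{D}_{\mc V}$ is a cobase change of a stable equivalence along a levelwise monomorphism, hence a stable equivalence, which is precisely strong left properness.

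I expect the main obstacle to be the first point, and within it the identification of $F_n(S^0)\wedge(-)$ with a levelwise finite-wedge functor; it is this concrete feature of the simplicial smash product — not anything about the stable structure — that forces the maps of $\mathcal{D}_{\mc V}$ to be levelwise monomorphisms, and it is the step that is genuinely particular to symmetric spectra over simplicial sets.
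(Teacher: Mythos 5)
Your proof is correct, and its skeleton is the same as the paper's: first show that every map in $\mathcal{D}_{\mathcal{V}}$ is a level cofibration (levelwise monomorphism), then use that stable equivalences are preserved by cobase change along level cofibrations. The difference is only in how you justify the two ingredients. For the first, the paper simply quotes the pushout-product theorem of Hovey--Shipley--Smith (Theorem 5.3.7(3)) applied to a stable ($S$-)cofibration $i$ and the level cofibration $*\to A$ (every symmetric spectrum being level cofibrant), whereas you reduce to the generating cofibrations and compute directly that $F_n(S^0)\wedge(-)$ is, in level $l$, the induced functor $B\mapsto(\Sigma_l)_+\wedge_{\Sigma_{l-n}}B_{l-n}$, a finite wedge of copies of $B_{l-n}$, hence preserves monomorphisms; this is an honest, more self-contained proof of the special case of 5.3.7(3) that is needed. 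For the second, the paper cites HSS Lemma 5.5.3(1) directly, while you rederive it from left properness of the injective stable model structure obtained by Bousfield localization of the injective level structure; this works (and could be shortened further: in the injective stable structure all objects are cofibrant, so left properness is automatic), but note that the identification of the localized weak equivalences with the stable equivalences of HSS deserves a word of justification, e.g.\ that the projective and injective level structures have the same weak equivalences, hence the same homotopy function complexes, so localizing either at the stabilization maps $F_{n+1}(S^1)\to F_n(S^0)$ yields the same class of local equivalences, namely the stable equivalences. In short, the paper's proof is a compact appeal to the relevant HSS results, while yours unpacks them; both are valid.
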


\begin{proof}
We will use freely the language of Hovey et al. in \cite{HSS} and all the references mentioned here are taken from their paper.

Theorem 5.3.7(3) states that, if $f$ is an $S$-cofibration and $g$ a level cofibration, their pushout product $f\,\square\, g$ is a level cofibration. Because any stable cofibration $i$ is an $S$-cofibration and any symmetric spectrum $A$ is level cofibrant, any map of the form $i\wedge A$ is a level cofibration. Because level cofibrations are stable under cobase change and filtered colimits, all maps in $\mc{D}_{\mc{V}}$ are level cofibrations. By Lemma 5.5.3(1), the stable equivalences are stable under cobase change along level cofibrations.
\end{proof}

Now we turn to Lydakis' simplicial functor model \cite{Lydakis} for $\Sp$. The category \cat V is now given by the pointed simplicial functors from finite pointed simplicial sets $\cat S^{\rm fin}_*$ to pointed simplicial sets $\cat S_*$. The symmetric monoidal product $\otimes$ is given by Day's convolution product \cite{Day-closed}. The monoid axiom for the stable model structure on pointed simplicial functors is proved by Dundas et. al.~\cite[Lemma 6.30]{DRO:enriched} for more general source and target categories. 
\begin{lemma}
Lydakis' stable model structure on pointed simplicial functors is strongly left proper.
\end{lemma}

\begin{proof}
Recall from Definition~\ref{Def. strongly left proper} that $\mathcal{D}_{\cat V}$ is the class of relative cell complexes generated by all morphisms of the form $i\otimes A$, where $i$ is a cofibration and $A$ an object in \cat V. 
We claim that all maps in $\mathcal{D}_{\cat V}$ are objectwise cofibrations. 
Since $\cat S_*$ is left proper it suffices to prove that $i\otimes A$ is an objectwise cofibration for $i$ in a generating set of cofibrations and all objects $A$.

Stable cofibrations coincide with the projective ones by~\cite[Lemma 9.4]{Lydakis}. A generating set for projective cofibrations in \cat V is 
  $$ I_{\cat V}=\{ R^X\wedge (\Lambda^n_k)_+\to R^X\wedge(\Delta^n)_+~|~n\ge k\ge 0~,~n>0~,~X\in\cat S^{\rm fin}_*  \}. $$
For $i\in I_{\cat V}$ the map $i\otimes A$ is isomorphic to
  $$ (R^X\otimes A)\wedge (\Lambda^n_k)_+\to (R^X\otimes A)\wedge(\Delta^n)_+ .$$
By \cite[Lemma 5.13]{Lydakis} or \cite[Cor. 2.8]{DRO:enriched}, using Lydakis' assembly map $F\otimes G\to F\circ G$, this is isomorphic to
  $$ (A\circ R^X)\wedge (\Lambda^n_k)_+\to (A\circ R^X)\wedge(\Delta^n)_+ .$$
After evaluating on an arbitrary finite pointed simplicial set $K$ this map
  $$ A\bigl(\mc{S}_*(K,X)\bigr)\wedge i\co A\bigl(\mc{S}_*(K,X)\bigr)\wedge (\Lambda^n_k)_+\to A\bigl(\mc{S}_*(K,X)\bigr)\wedge(\Delta^n)_+ $$
is clearly a cofibration. We have shown that $\mathcal{D}_{\cat V}$ consists of objectwise cofibrations.

Given a stable weak equivalence, we factor it into a trivial stable cofibration followed by a trivial stable fibration. Every cobase change of the first map remains a trivial stable cofibration. The second map is an objectwise weak equivalence and pushes out along an objectwise cofibration to an objectwise weak equivalence by left properness of $\cat S_*$. 
The composite of both cobase changes is a stable weak equivalence.
\end{proof}

In conclusion, if we take symmetric spectra on simplicial sets or Lydakis' simplicial functors as models for $\Sp$, the fibrant-projective model structure exists on the category $\Sp^{\Sp}$ of small endofunctors and is proper. Here follow some properties of the model structure on $\Sp$ that we will use further down the line.

\begin{remark}\label{rem:further-property}
This property of the fibrant-projective model structure on the category $\Sp^{\Sp}$ is used in Lemma~\ref{compact-dual} below: For any cofibrant object $A$ in $\Sp$ the functor $A\wedge -$ maps stable equivalences to stable equivalences. For symmetric spectra this follows from \cite[5.3.10]{HSS}. For simplicial functors this follows from \cite[Thm.~12.6]{Lydakis}.
\end{remark}

The fibrant-projective model structure on $\Sp^{\Sp}$ is simplicial. This is true for both models by the following reasoning.
\begin{lemma}
Suppose that \cat V is a symmetric closed monoidal model category and that $F\co\mc{S}\to\cat V$ is a strict symmetric monoidal left Quillen functor from the category of pointed simplicial sets. Then every \cat V-category is an $\mc{S}$-category where the simplicial tensor is given by
  $$ V\otimes_{\mc{S}}K=V\otimes_{\cat V}F(K). $$
\end{lemma}

\begin{proof}
The pointed simplicial structure is supplied by \cite[Prop. 6.4.3]{Bor:2}. The verification of compatibility is routine.
\end{proof}

Thus it suffices to exhibit a functor $F\co\mc{S}\to\Sp$ as in the previous lemma. No surprises here; for symmetric spectra this is the symmetric suspension spectrum $K\mapsto\Sigma^{\infty}K$ \cite[p. 163]{HSS}. For the Lydakis model $F$ is given by $K\mapsto\Id\wedge K$ where $\Id$ is the inclusion functor of finite pointed simplicial sets to all pointed simplicial sets and the smash is objectwise. Thus, we have

\begin{remark}\label{rem:simplicial-structure}
For either symmetric spectra or Lydakis' simplicial functors as models for $\Sp$ the fibrant-prjective model structure on $\Sp^\Sp$ is simplicial. 
\end{remark}

\begin{remark}\label{rem:fin-pres-dom-codom}
Since both models for spectra are obtained by localization of either the projective model structure \cite{Lydakis}, or the strict model structure \cite{HSS}, the sets of generating cofibrations have finitely presentable domains and codomains.
\end{remark}

\section{Homotopy functors}\label{section-homotopy}
In this section we assume, like in Section~\ref{fibrant-projective-section}, that $\cat{V}$ is a closed symmetric monoidal combinatorial model category and $\cat K$ is a combinatorial $\cat V$-model category, so that the category of small functors supports the fibrant-projective model structure constructed in Theorem~\ref{fibrant-projective}. 
We assume, in addition to the previous assumptions, that $\cat V$ is a strongly left proper model category, so that the fibrant-projective model structure on the category $\cat V^{\cat K}=\cat F$ of small functors is left proper.
This allows us to localize functors in $\cat F$ turning them into a homotopy functors. 
The whole section is subsumed in Lemma~\ref{approx} which later enters in the proof of Proposition~\ref{E-local}.

\begin{definition}\label{def:homotopy-functor}
By a {\it homotopy functor} in $\cat F$ we mean any functor preserving weak equivalences between fibrant objects. 
\end{definition}

Usually, a homotopy functor is required to preserve all weak equivalences.
If desired, a homotopy functor in our sense here may be turned into a usual homotopy functor by precomposing with a fibrant approximation functor in $\cat K$, while preserving the fibrant-projective homotopy type.

\begin{definition}
Consider the class of maps between cofibrant functors:
\[
\cal H = \{R^{B}\to R^{A} \,|\, A\we B \text{ weak equivalence of fibrant objects in } \cat K\}
\]
We use the standard notions of {\it \mc{H}-local object} and {\it\mc{H}-$($local$)$ equivalence} defined by Hirschhorn \cite[3.1.4]{Hirschhorn}.
\end{definition}

\begin{lemma}
A functor in $\cat V^{\cat K}$ is \mc{H}-local if and only if it is fibrant in the fibrant-projective model structure and a homotopy functor in the sense of Definition~\emph{\ref{def:homotopy-functor}}.
\end{lemma}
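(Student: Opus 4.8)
The statement characterizes $\mc{H}$-local objects in the fibrant-projective model structure as precisely the functors that are (i) fibrant-projectively fibrant and (ii) homotopy functors in the sense of Definition~\ref{def:homotopy-functor}. By Hirschhorn's definition \cite[3.1.4]{Hirschhorn}, a functor $G$ is $\mc{H}$-local if it is fibrant (in the fibrant-projective structure) and for every map $R^B\to R^A$ in $\mc{H}$ the induced map on derived mapping spaces
  $$\map(R^A,G)\to\map(R^B,G)$$
is a weak equivalence in $\cat V$ (or in simplicial sets, depending on the chosen enrichment of the homotopy function complex). So the plan is to fix a fibrant $G$ and show that this mapping-space condition holds for all of $\mc{H}$ if and only if $G$ preserves weak equivalences between fibrant objects.

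First I would compute the relevant mapping spaces. By Lemma~\ref{cofibrant representables}, for fibrant $X\in\cat K$ the representable $R^X$ is cofibrant in the fibrant-projective structure, so $\map(R^X,G)$ is a genuine derived mapping space. The enriched Yoneda lemma gives a natural isomorphism $\cat V^{\cat K}(R^X,G)\cong G(X)$, and since $G$ is fibrant-projectively fibrant, $G(X)$ is fibrant in $\cat V$ for every fibrant $X$; hence $\map(R^X,G)$ is computed, up to weak equivalence, by $G(X)$ itself (one spells this out using a cosimplicial resolution of $R^X$, or uses that $R^X$ is cofibrant and $G$ fibrant so that $\cat V^{\cat K}(R^X,G)$ already has the right homotopy type as a $\cat V$-object). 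Under this identification, for a map $A\we B$ of fibrant objects in $\cat K$, the map $\map(R^A,G)\to\map(R^B,G)$ induced by $R^B\to R^A\in\mc{H}$ corresponds to $G(A\we B)\colon G(A)\to G(B)$.

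With this translation in hand, both directions are immediate. If $G$ is $\mc{H}$-local, then for every weak equivalence $A\we B$ between fibrant objects the map $G(A)\to G(B)$ is a weak equivalence, i.e.\ $G$ is a homotopy functor; and $G$ is fibrant-projectively fibrant by definition of $\mc{H}$-locality. Conversely, if $G$ is fibrant-projectively fibrant and a homotopy functor, then $G(A)\to G(B)$ is a weak equivalence for every $A\we B$ in $\cat K^f$, so $\map(R^A,G)\to\map(R^B,G)$ is a weak equivalence for every generating map of $\mc{H}$; together with fibrancy this is exactly $\mc{H}$-locality.

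**Main obstacle.** The only non-formal point is the identification of the derived mapping space $\map(R^X,G)$ with $G(X)$ compatibly with the maps in $\mc{H}$; everything else is bookkeeping with the enriched Yoneda lemma. I would handle this by invoking Lemma~\ref{cofibrant representables} (cofibrancy of $R^X$ for fibrant $X$) together with the $\cat V$-enrichment of the fibrant-projective structure from Proposition~\ref{V-model-structure}: for $R^X$ cofibrant and $G$ fibrant, the $\cat V$-object $\cat V^{\cat K}(R^X,G)\cong G(X)$ is already a homotopically meaningful model for the function complex, and naturality in $X$ gives compatibility with $\mc{H}$. One subtlety worth a remark is that Definition~\ref{def:homotopy-functor} only requires preservation of weak equivalences between fibrant objects, which matches exactly the fact that $\mc{H}$ was defined using weak equivalences of fibrant objects in $\cat K$ — so no fibrant-replacement gymnastics inside $\cat K$ is needed.
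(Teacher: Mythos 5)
The paper leaves this proof to the reader, and your route is surely the intended one: cofibrancy of $R^X$ for fibrant $X$ (Lemma~\ref{cofibrant representables}), the enriched Yoneda isomorphism $\cat V^{\cat K}(R^X,G)\cong G(X)$, fibrancy of the values $G(X)$ for fibrant-projectively fibrant $G$, and then both implications are read off directly; the bookkeeping in your last paragraph, including the observation that $\mc H$ was defined using weak equivalences of fibrant objects so that no fibrant replacement in $\cat K$ is needed, is correct.

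The one point you should nail down is the parenthetical about what $\map(R^X,G)$ means, because on the literal reading of Hirschhorn's definition it is the only place the argument could fail. Hirschhorn's homotopy function complexes are simplicial sets: a cosimplicial resolution of $R^X$ computes, for cofibrant $R^X$ and fibrant $G$, the simplicial set $\Map_{\cat F}(R^X,G)\cong \Map_{\cat V}(S,G(X))$, not the $\cat V$-object $G(X)$ itself. For the direction ``fibrant homotopy functor $\Rightarrow$ local'' this costs nothing, since a weak equivalence $G(A)\to G(B)$ of fibrant objects of $\cat V$ induces a weak equivalence $\Map_{\cat V}(S,G(A))\to \Map_{\cat V}(S,G(B))$ because the unit $S$ is cofibrant. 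But in the converse direction the simplicial reading genuinely loses information when $\cat V=\Sp$: for fibrant spectra, $\Map_{\Sp}(S,-)$ only records the underlying infinite loop space, so a map inducing a weak equivalence on these mapping spaces for all maps of $\mc H$ need not be objectwise a stable equivalence, and ``local in the simplicial sense $\Rightarrow$ homotopy functor'' does not follow formally from your identification. The statement is to be understood with $\cat V$-enriched mapping objects: $G$ is $\mc H$-local iff it is fibrant and $\hom_{\cat F}(R^A,G)\to\hom_{\cat F}(R^B,G)$ — which by Yoneda is exactly $G(A)\to G(B)$ — is a weak equivalence in $\cat V$ for every $R^B\to R^A$ in $\mc H$. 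Under that reading your proof is complete and is precisely the argument the authors had in mind; so state explicitly that locality is taken in the enriched sense, and drop the suggestion that a cosimplicial resolution identifies $\map(R^X,G)$ with $G(X)$ as an object of $\cat V$.
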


\begin{proof}
Left to the reader.
\end{proof}

On $\cat V^{\cat K}$ there exists the projective model structure \cite{Chorny-Dwyer} whose fibrant functors are the objectwise fibrant ones. Obviously, every projectively fibrant functor is fibrant-projectively fibrant.
\begin{proposition}\label{hom-loc}
For every small functor $X\in \cat V^{\cat K}$, there exists an $\cal H$-equivalence $\eta_{X}\colon X\to HX$ such that $HX$ is a homotopy functor with fibrant values on fibrant objects.
\end{proposition}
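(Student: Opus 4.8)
The plan is to produce $HX$ by a left Bousfield localization of the \emph{projective} model structure (where fibrant objects are the objectwise-fibrant functors) with respect to the class $\cal H$, and then check that the resulting fibrant replacement lands in objectwise-fibrant homotopy functors. First I would invoke the fact, due to Chorny and Dwyer \cite{Chorny-Dwyer}, that the projective model structure on $\cat V^{\cat K}$ exists and is class-combinatorial; combined with the strong left properness of $\cat V$, which (as in the proof of Theorem~\ref{fibrant-projective}) forces left properness of the projective structure as well. Then I would apply the class-version of the left Bousfield localization machinery — the same generalized small-object argument from \cite{pro-spaces} used throughout Section~\ref{fibrant-projective-section}, together with the non-functorial Bousfield--Friedlander-type localization developed in the Appendix — to localize at $\cal H$. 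The localization map $\eta_X\colon X\to L_{\cal H}X$ is by construction an $\cal H$-equivalence, and $L_{\cal H}X$ is $\cal H$-local, hence objectwise fibrant (since localizing the \emph{projective} structure keeps the fibrant objects objectwise fibrant) and, by the characterization of $\cal H$-local objects, a homotopy functor. Set $HX=L_{\cal H}X$.

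The key steps, in order, are: (i) recall the projective model structure on $\cat V^{\cat K}$ and note its left properness under our hypotheses on $\cat V$; (ii) verify that $\cal H$ is (up to the usual class-sized caveats) a set-like class admitting the generalized small-object argument — the domains and codomains of maps in $\cal H$ are representables, which are $\mu$-presentable for suitable $\mu$ exactly as exploited in Lemma~\ref{postponed}, so the co-solution-set condition holds; (iii) perform the (non-functorial) localization to obtain $\eta_X\colon X\to HX$ with $HX$ being $\cal H$-local; (iv) translate $\cal H$-locality into the two desired properties via the preceding lemma characterizing $\cal H$-local objects as objectwise-fibrant homotopy functors. A small point to handle in step (iii): because our localization construction is not functorial, we only claim the existence of \emph{some} $\cal H$-equivalence $\eta_X$ for each $X$, which is all the statement asserts.

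The main obstacle is step (ii), the class-sized bookkeeping: $\cal H$ is a proper class, not a set, so the ordinary Bousfield localization theorems do not apply off the shelf. The resolution is precisely the one already in play in this paper — represent an arbitrary small functor $X$ as a weighted colimit of a \emph{set} of representables $R^K\otimes M$ with $K$ ranging over a small subcategory $\cat K_X$, pick $\mu$ large enough that all these $K$ (and their fibrant replacements) are $\mu$-presentable, and then only the $\mu$-presentable slice of $\cal H$ is relevant for localizing $X$. This reduces the localization at the proper class $\cal H$ to a set-sized localization problem relative to each $X$, which is exactly the kind of argument the generalized small-object argument of \cite{pro-spaces} and the generalized Bousfield--Friedlander machinery of the Appendix are designed to handle. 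The remaining verifications — that $\cal H$-local objects are characterized as in the cited lemma, and that $\eta_X$ is an $\cal H$-equivalence — are then formal.
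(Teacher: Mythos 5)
Your core strategy is the same as the paper's: since $X$ is small, hence accessible, you cut the proper class $\cal H$ down to a set $\cal H_{\mu}$ of maps whose domains and codomains are representables of $\mu$-presentable fibrant objects, for $\mu$ large enough relative to $X$ and to the accessibility of the weak equivalences of $\cat V$, and then run the small object argument (in the paper, on the set $\Hor(\cal H'_{\mu})\cup J_{\mu}$), using left properness to conclude that the coaugmentation is an $\cal H$-equivalence and its target is $\cal H$-local. Where you genuinely diverge is the ambient model structure: the paper works throughout in the \emph{fibrant-projective} structure (left proper because $\cat V$ is assumed strongly left proper), not the projective one. Accordingly, the lemma preceding Proposition~\ref{hom-loc} characterizes $\cal H$-local objects as \emph{fibrant-projectively} fibrant homotopy functors, not objectwise fibrant ones, and the notions of $\cal H$-local object and $\cal H$-equivalence used downstream (Lemma~\ref{approx}, Proposition~\ref{E-local}) are Hirschhorn's notions taken relative to the fibrant-projective structure. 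Your projective localization reaches the stated conclusion about $HX$ (objectwise fibrant homotopy functor) more directly, but on the face of it it only makes $\eta_X$ an $\cal H$-equivalence in the projective sense; the projective and fibrant-projective structures are not related by an identity Quillen adjunction (projective cofibrations need not be fibrant-projective ones, and fibrant-projective trivial cofibrations need not be projective ones), so passing between the two notions of $\cal H$-equivalence is not "formal," contrary to your closing remark. It is repairable: the cells you attach are either objectwise weak equivalences (hence fibrant-projective weak equivalences) or horns on cofibrations between representables of fibrant objects, which are fibrant-projectively cofibrant, so Hirschhorn's left-properness argument can be rerun in the fibrant-projective structure; but this check should be made explicit rather than subsumed under "$\cal H$-locality translates via the preceding lemma," which you have quoted with the wrong fibrancy condition.

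A second, smaller point: the appeal to the generalized Bousfield--Friedlander machinery of the Appendix is misplaced. That machinery manufactures a $Q$-local \emph{model structure} from a homotopy idempotent construction and is used only in Section~\ref{Q-loc}; here no localized model structure is needed or constructed (the paper explicitly remarks that the corresponding left Bousfield localization of the model category is more involved and is not used), only a per-object factorization produced by the ordinary small object argument applied to the set $\Hor(\cal H'_{\mu})\cup J_{\mu}$. Dropping that appeal, and adding the compatibility check above (or re-proving the characterization lemma for the projective structure and then comparing), turns your outline into a complete argument.
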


\begin{proof}
Similarly to the proof of Lemma~\ref{postponed}, let $\mu$ be the maximal cardinal between the accessibility rank of the small (hence, accessible) functor $X$ and the degree of accessibility of the subcategory of weak equivalences in the combinatorial model category $\cat V$; then, it suffices to construct a localization of $X$ with respect to the set $\cal H_{\mu}\subset \cal H$ of maps with $\mu$ accessible domains and codomains. Since $\cat V^{\cat K}$ is left proper, it suffices to apply the small object argument with respect to the following set of maps:
\[
\cal L = \Hor (\cal H'_{\mu}) \cup J_{\mu},
\]
where $J_{\mu}\subset J_{\cat F}$ is the subset of generating trivial cofibrations with $\mu$-accessible domains and codomains, $\cal H'_{\mu}$ is a set of cofibrations obtained from $\cal H_{\mu}$, and $\Hor(-)$ denotes the horns on a set of maps defined in \cite[1.3.2]{Hirschhorn}  
\end{proof}

The following corollary is a standard conclusion from the application of the (generalized) small-object argument, \cite{pro-spaces}.

\begin{corollary}
For every map $f\colon X\to Y$, where $Y$ is a fibrant-projectively fibrant homotopy functor, there exists a map $g\colon HX\to Y$, unique up to homotopy, such that $g\eta_{X}=f$.
\end{corollary}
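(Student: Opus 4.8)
This corollary is the standard universal property of a localization map built by the small-object argument, so the plan is simply to check that the data produced in Proposition~\ref{hom-loc} fit the usual template. By the lemma characterizing $\cal H$-local objects above, the hypothesis that $Y$ is fibrant-projectively fibrant and a homotopy functor in the sense of Definition~\ref{def:homotopy-functor} says precisely that $Y$ is $\cal H$-local. By construction $\eta_X\colon X\to HX$ is a relative $\cal L$-cell complex for $\cal L=\Hor(\cal H'_\mu)\cup J_\mu$; since the maps of $\cal H'_\mu$ are cofibrations between cofibrant functors (the representables $R^A$, $R^B$ with $A,B$ fibrant are cofibrant by Lemma~\ref{cofibrant representables}) and $\cal H$-local equivalences, $\eta_X$ lies in $\cal L$-cof and is itself a cofibration in $\cat F$ as well as an $\cal H$-local equivalence.

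For existence I would form the commutative square with $f$ along the top, $\eta_X$ down the left, the unique map $HX\to\ast$ to the terminal functor along the bottom, and $Y\to\ast$ down the right. The map $Y\to\ast$ lies in $\cal L$-inj: it has the right lifting property against $J_\mu\subseteq J_{\cat F}$ because $Y$ is a fibrant-projective fibrant object, and against $\Hor(\cal H'_\mu)$ because an $\cal H$-local object has the right lifting property against the horns on the localizing maps \cite{Hirschhorn}. Since $\eta_X\in\cal L$-cof, the square admits a lift $g\colon HX\to Y$, and the upper triangle gives $g\eta_X=f$.

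For uniqueness up to homotopy, suppose $g,g'\colon HX\to Y$ both satisfy $g\eta_X=g'\eta_X=f$. Factor the diagonal $Y\to Y\times Y$ in the fibrant-projective model structure as $Y\we Y'\fibr Y\times Y$. A fibrant-projective weak equivalence out of a homotopy functor is again a homotopy functor, by $2$-out-of-$3$ applied to the values on fibrant objects of $\cat K$, so $Y'$ is a fibrant-projective fibrant homotopy functor, hence $\cal H$-local; and $Y\times Y$ is $\cal H$-local as a product of $\cal H$-local objects. Thus $Y'\fibr Y\times Y$ is a fibration between $\cal H$-local objects, so it has the right lifting property against every cofibration that is an $\cal H$-local equivalence, in particular against the maps of $\Hor(\cal H'_\mu)$ and of $J_\mu$, i.e.\ it lies in $\cal L$-inj. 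The pair $(g,g')$ defines a map $HX\to Y\times Y$ whose precomposition with $\eta_X$ factors through $Y\we Y'$; lifting $\eta_X\in\cal L$-cof against $Y'\fibr Y\times Y$ yields a map $HX\to Y'$, that is, a right homotopy $g\simeq g'$.

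The one point I expect to require care is that $\eta_X$ is constructed using only the $\mu$-small data $\Hor(\cal H'_\mu)$ and $J_\mu$, whereas $Y$ and $Y'$ may have arbitrarily large accessibility rank; the argument nevertheless goes through because $\cal H$-locality is a condition about the whole class $\cal H$ and therefore a fortiori yields lifting against the $\mu$-small horns, while fibrant-projective fibrancy yields lifting against all of $J_{\cat F}\supseteq J_\mu$, so that $Y\to\ast$ and $Y'\fibr Y\times Y$ are $\cal L$-injective independently of their size. One could equivalently phrase the whole argument inside the left Bousfield localization of $\cat F$ along $\cal H$, in which $\eta_X$ is a trivial cofibration with fibrant target.
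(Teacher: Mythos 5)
Your proof is correct and is exactly the standard universal-property argument that the paper itself invokes without details (it merely remarks that the corollary is a standard consequence of the small-object argument used in Proposition~\ref{hom-loc}): existence from lifting $\eta_X\in\cal L\text{-cof}$ against the $\cal L$-injective map $Y\to\ast$, uniqueness via a path object, noting that the stronger claim about fibrations between $\cal H$-local objects is not needed since $\cal L$-injectivity suffices, as you say.
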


\begin{remark}
We have constructed, so far, for every small functor $F\in \cat V^{\cat K}$ a map into a homotopy functor $F \to HF$, which is initial, up to homotopy, among the maps into arbitrary homotopy functors. Unlike a similar localization in \cite{BCR} for the projective model structure on $\cal S^{\cal S}$, our current construction is not functorial (since it depends on the accessibility rank of a small functor, which we are localizing), so the corresponding left Bousfield localization of the model category is more involved, \cite[3.2]{ClassHomFun}. We do not use the localized model category in this paper. 
\end{remark}

\begin{definition}
Recall from Defintion~\ref{Kmu} that $\cat K_{\mu}$ denotes the full subcategory of $\cat K$ given by the $\mu$-presentable objects. Recall that $\cat K_{\mu}$ is small, since \cat K is locally presentable. Let $\cat K^{\rm cf}_{\mu}$ be the set of fibrant and cofibrant objects in $\cat K_{\mu}$. We define the following set of maps in $\cat V^{\cat K}$:
  $$ C_{\mu}=\{ R^{A}\otimes K \to R^A\otimes L\,|\, A\in\cat K^{\rm cf}_{\mu}, \vertmap K L \in I_{\cat V} \}.$$
And denote the proper class $C=\cup C_\mu$, where the union is indexed by all ordinals. A functor $X$ in $\cat V^{\cat K}$ is called {\it $C$-cellular} if the map $\emptyset\to X$ is in $C_{\mu}$-cell for some $\mu$.
\end{definition}

\begin{proposition}\label{C-cell-approx}
Let $X$ be a homotopy functor in $\cat V^{\cat K}$. Then there exists a fibrant-projective weak equivalence $X_C\we X$ where $X_C$ is $C$-cellular.
\end{proposition}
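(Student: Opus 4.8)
The plan is to construct $X_C$ as a cellular object built from the maps in $C_\mu$, using the generalized small object argument, and then verify that the natural map $X_C\to X$ is a fibrant-projective weak equivalence by evaluating on fibrant objects. First I would fix a regular cardinal $\mu$ large enough that $X$ is a $\mu$-accessible functor (possible since $X$ is small, hence accessible) and simultaneously large enough that the fibrant replacement functor $\hat{-}$ of Lemma~\ref{postponed} restricts to $\cat K_\mu$; by enlarging $\mu$ we may also assume $\cat K$ is locally $\mu$-presentable and that the weak equivalences of $\cat V$ are $\mu$-accessible. Then I would apply the ordinary small object argument to the map $\emptyset\to X$ with respect to the set $C_\mu$, producing a factorization $\emptyset\to X_C\xrightarrow{p} X$ with $\emptyset\to X_C$ in $C_\mu$-cell (so $X_C$ is $C$-cellular) and $p$ in $C_\mu$-inj.

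Next I would identify what $C_\mu$-injectivity buys us on fibrant objects. By adjunction, a map $Y\to Y'$ lies in $C_\mu$-inj if and only if for every fibrant-cofibrant $A\in\cat K_\mu$ the evaluation $Y(A)\to Y'(A)$ has the right lifting property against $I_{\cat V}$, i.e.\ is a trivial fibration in $\cat V$. So $p(A)\colon X_C(A)\we X(A)$ is a trivial fibration for all $A\in\cat K^{\mathrm{cf}}_\mu$. The remaining work is to bootstrap this from fibrant-cofibrant $\mu$-presentable objects to all fibrant objects. This proceeds in the same style as the end of Lemma~\ref{postponed}: since $X_C$ is a colimit of the representables $R^A\otimes K$ with $A\in\cat K^{\mathrm{cf}}_\mu$, it is $\mu$-accessible, and $X$ is $\mu$-accessible by choice of $\mu$; every object of $\cat K$ is a $\mu$-filtered colimit of objects of $\cat K_\mu$, and one uses a $\mu$-presentable \emph{cofibrant} replacement together with the $\lambda$-accessible (hence $\mu$-accessible) fibrant replacement to write an arbitrary fibrant $X'$ as a $\mu$-filtered colimit of objects $\widehat{(X'_i)^{\mathrm{cof}}}$ lying in $\cat K^{\mathrm{cf}}_\mu$ up to weak equivalence; then $p(X')$ is a $\mu$-filtered colimit of the trivial fibrations $p$ evaluated on these objects, hence a weak equivalence. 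Here is where the hypothesis that $X$ is a \emph{homotopy functor} enters crucially: we only control $p$ on $\cat K^{\mathrm{cf}}_\mu$, and passing to an arbitrary fibrant object requires replacing it, up to weak equivalence, by cofibrant-fibrant ones and invoking that both $X$ and $X_C$ send those weak equivalences between fibrant objects to weak equivalences (for $X$ by hypothesis, for $X_C$ because $C$-cellular functors are homotopy functors, which one checks by a cellular induction using the pushout-product/monoid-axiom arguments as in Lemma~\ref{I-cell-in-we}).

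The main obstacle I expect is exactly this last bootstrapping step: making precise that $X_C$ is a homotopy functor and that an arbitrary fibrant object can be resolved by a $\mu$-filtered diagram of fibrant-cofibrant $\mu$-presentable objects compatibly with both functors, so that $\mu$-accessibility lets the trivial fibrations pass to the colimit. The cofibrant-replacement half of this requires a little care, since fibrant replacement and cofibrant replacement do not obviously commute; one arranges $\mu$ so that a $\mu$-accessible cofibrant replacement functor exists (by \cite[Prop.~2.3]{Dugger-presentation} applied to $\cat K$), applies it first and then the fibrant replacement, and checks the composite stays within $\cat K_\mu$ and lands in objects that are both fibrant and cofibrant up to the weak equivalences already accounted for. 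Everything else — the small object argument, the adjunction computation identifying $C_\mu$-inj, and the retract argument reducing a general fibrant object to its fibrant replacement — is routine given the machinery already set up in Sections~\ref{fibrant-projective-section} and~\ref{section-homotopy}.
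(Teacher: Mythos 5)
Your proposal is correct and follows essentially the same route as the paper: choose $\mu$ with $X$ $\mu$-accessible, run the small object argument on $\emptyset\to X$ against $C_\mu$ (as in Lemma~\ref{postponed}), observe via adjunction that the resulting map is a weak equivalence on fibrant-cofibrant $\mu$-presentables, and then use that $R^A$, $X_C$ (by cellular induction) and $X$ are homotopy functors together with accessibility to upgrade this to a fibrant-projective equivalence. The only cosmetic difference is that the paper runs the cellular induction showing $X_C$ is a homotopy functor via the Cube Lemma \cite[13.5.10]{Hirschhorn} rather than a monoid-axiom style argument, and it leaves the bootstrapping to all fibrant objects (which you spell out) implicit.
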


\begin{proof}
Let $\mu$ be a regular cardinal such that the small functor $X$ is $\mu$-accessible. The construction of the required cofibrant approximation is the same as in Lemma~\ref{postponed}, except that we will use only the cofibrations in $C_{\mu}$.

The application of the small object argument produces a map $X_C\to X$, such that $X_C(A)\to X(A)$ is a weak equivalence for every fibrant and cofibrant object $A\in \cat K_{\mu}$. But for such $A$, every functor $R^{A}$ is a homotopy functor. Moreover, $X_C$ is also a homotopy functor, as may be proved by cellular induction using the Cube Lemma~\cite[13.5.10]{Hirschhorn}. Since $X$ is also a homotopy functor, the map $X_C\to X$ is a fibrant projective equivalence.
\end{proof}

\begin{lemma}\label{approx}
Every small functor is $\cal H$-equivalent to a $C$-cellular functor.
\end{lemma}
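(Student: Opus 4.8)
The plan is to combine the two approximation results already established in this section, namely Proposition~\ref{hom-loc} and Proposition~\ref{C-cell-approx}, in the obvious sequential manner. Given an arbitrary small functor $F\in\cat V^{\cat K}$, I would first apply Proposition~\ref{hom-loc} to obtain an $\cal H$-equivalence $\eta_F\colon F\to HF$ with $HF$ an objectwise fibrant homotopy functor. In particular $HF$ is a homotopy functor in the sense of Definition~\ref{def:homotopy-functor}, so Proposition~\ref{C-cell-approx} applies to it and yields a fibrant-projective weak equivalence $(HF)_C\we HF$ with $(HF)_C$ a $C$-cellular functor.

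The remaining point is to see that the zig-zag
\[
F \xrightarrow{\ \eta_F\ } HF \xleftarrow{\ \sim\ } (HF)_C
\]
exhibits $F$ as $\cal H$-equivalent to the $C$-cellular functor $(HF)_C$. For this I would note that every fibrant-projective weak equivalence is in particular an $\cal H$-local equivalence: indeed, the class of $\cal H$-local equivalences contains the fibrant-projective weak equivalences by the general theory of left Bousfield localization (Hirschhorn~\cite[3.2.18]{Hirschhorn}), since $\cal H$-local objects are fibrant-projectively fibrant homotopy functors and the two-out-of-three property together with the characterization of local equivalences via mapping spaces into local objects gives the inclusion. Hence $(HF)_C\we HF$ is an $\cal H$-equivalence, and composing with $\eta_F$ (using that $\cal H$-equivalences satisfy two-out-of-three, being the weak equivalences of a model structure, or simply that they are closed under composition and ``inversion'' in the homotopy category of the localization) shows that $F$ and $(HF)_C$ become isomorphic in the $\cal H$-localized homotopy category, which is exactly the assertion.

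I do not anticipate a genuine obstacle here; the statement is essentially a bookkeeping corollary packaging the section's work. The only mild subtlety worth spelling out is the one in the previous paragraph: one must be slightly careful that Proposition~\ref{C-cell-approx} is applied to $HF$ rather than to $F$ directly (it requires a homotopy functor as input), and that a plain fibrant-projective weak equivalence indeed counts as an $\cal H$-equivalence so that the zig-zag can be collapsed. Both facts are immediate from the cited results, so the proof is short.

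\begin{proof}
Let $F\in\cat V^{\cat K}$ be a small functor. By Proposition~\ref{hom-loc} there is an $\cal H$-equivalence $\eta_F\colon F\to HF$ with $HF$ an objectwise fibrant homotopy functor. Being a homotopy functor in the sense of Definition~\ref{def:homotopy-functor}, $HF$ satisfies the hypothesis of Proposition~\ref{C-cell-approx}, which provides a fibrant-projective weak equivalence $s\colon (HF)_C\we HF$ with $(HF)_C$ a $C$-cellular functor. Since $\cat V^{\cat K}$ is left proper and $\cal H$-local objects are precisely the fibrant-projectively fibrant homotopy functors, every fibrant-projective weak equivalence is an $\cal H$-local equivalence by \cite[3.2.18]{Hirschhorn}; in particular $s$ is an $\cal H$-equivalence. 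As $\cal H$-equivalences satisfy the two-out-of-three property, the zig-zag $F\xrightarrow{\eta_F}HF\xleftarrow{s}(HF)_C$ shows that $F$ is $\cal H$-equivalent to the $C$-cellular functor $(HF)_C$.
\end{proof}
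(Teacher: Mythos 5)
Your proof is correct and follows essentially the same route as the paper: apply Proposition~\ref{hom-loc} to get an $\cal H$-equivalence $F\to HF$, then Proposition~\ref{C-cell-approx} to get a $C$-cellular approximation of $HF$, and observe that the resulting zig-zag consists of $\cal H$-equivalences. Your added remark that a fibrant-projective weak equivalence is automatically an $\cal H$-local equivalence (immediate from the definition of local equivalences via mapping spaces into local objects) is a correct filling-in of a step the paper leaves implicit.
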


\begin{proof}
For every functor $X$, we construct a homotopy approximation using Proposition~\ref{hom-loc}. We obtain an $\cal H$-equivalence $X\to HX$, such that $HX$ is a homotopy functor. Proposition \ref{C-cell-approx} then allows the construction of a cellular approximation for $\tilde HX\to HX$. We obtain a zig-zag
  $$ X\to HX\leftarrow \tilde HX$$
of $\cal H$-local equivalences.
\end{proof}

\section{The Yoneda embedding as a Quillen equivalence} \label{Q-loc}

The important part of this section is Theorem~\ref{Yoneda Quillen equivalence} where we establish that the the Yoneda adjunction~(\ref{adj-V-K})
\begin{equation}\label{adj-Sp}
   Z\co \Sp^{\Sp}\rightleftarrows \Sp^{\op} : \! Y 
\end{equation}
is a Quillen equivalence. One first notes that the counit $ZY(X)\to X$ is an isomorphism for all spectra $X$. We are done once the unit 
  $$\eta_F\colon F\to YZ(F)$$ 
is a weak equivalence for all small functors $F$. Since this is not the case for the fibrant-projective model structure on $\Sp^{\Sp}$ we perform a localization of it which forces $\eta_F$ to become a local equivalence. 
This localization will be a generalization of the Bousfield-Friedlander technique \cite{BF:gamma} where conditions on a coaugmented functor $Q$ are given such that $Q$ becomes the desired localization functor. 
Our generalization of it deals with the existence of a $Q$-local model structure even in situations where $Q$ is not functorial. This is necessary in categories of small functors, since the factorizations are not functorial -- or at least we do not have functorial constructions of these factorizations. We develop this non-functorial localization in Appendix~\ref{gen-BF}. Here we will apply it by exhibiting a $Q$ that suits our purpose.

Before we proceed let us recall the simplicial mapping cylinder construction. For a map $f\co A\to B$ in a simplicial model category we define $\MC(f)$ as the following combined pushout
\[
\xymatrix{ A \ar[r]^{\imath_0}\ar[d]_f & A\sqcup A \ar[r]^-{i_0\sqcup i_1}\ar[d]^{f\sqcup \Id} & A\otimes\Delta^1 \ar[d]^{f'} \\
           B \ar[r] & B\sqcup A \ar[r] & \MC(f)}
\] 
where $\imath_0$ is the inclusion into the first summand and $i_0$, $i_1$ are the inclusions on the bottom and top of the cylinder. It is a standard argument using the right hand pushout to see that the map 
\begin{equation}\label{def:ell1}
  \ell_1=f'i_1\co A\to\MC(f)
\end{equation} 
is a cofibration as long as $B$ is cofibrant. The universal property of the pushout yields a simplicial equivalence $q\co\MC(f)\to B$ with a section given by the lower horizontal map in the previous diagram such that $f=qi_0$. 

The first idea to take for $Q$ the adjunction~(\ref{adj-Sp}) $\eta\colon F\to YZ(F)$ itself does not work because $YZ$ does not preserve fibrant-projective weak equivalences. However, one can do the following: given $F$, consider first its cofibrant replacement $\tilde F$ and apply the left Quillen functor $Z$, then replace $Z(\tilde F)$ by a fibrant $(Z(\tilde F))^{\wedge}$ and the apply the right Quillen functor $Y$. (We put the standard notation of the fibrant replacement $(\widehat{\phantom{X}})$ on the on the righthand side when the hat becomes awkwardly large; $\widehat Z = Z^{\wedge}$ denotes the composition of $Z$ with the fibrant replacement functor.)
The composition $Y\hat Z$ preserves fibrant-projective weak equivalences between fibrant-projectively cofibrant functors. Finally, this construction has to be equipped with a coaugmentation for arbitrary $F$. This uses the simplicial mapping cylinder as follows:
\[
\xymatrix{
\tilde F\ar@{->>}[dd] ^{\dir{~}}\ar[r]^{\eta_{\tilde F}}\ar@{^(->}[dr]_-{\ell_1}\ar@/^17pt/[rr]^{f}& YZ(\tilde F)\ar[r] & Y\widehat{Z(\tilde F)}\\
& \MC(f) \ar[ur]^-{\dir{~}}_q\ar[d]^{\dir{~}}\\
F\ar[r]^{i}& QF,
}
\]
where $f$ is a composition of the unit $\eta_{\tilde F}$ with an application of $Y$ on the fibrant replacement $Z(\tilde F) \to \widehat{Z(\tilde F)}$ in $\Sp^{\op}$ (cofibrant replacement in $\Sp$), and 
  $$QF= F\sqcup_{\tilde F} \MC(f).$$ 
The codomain $Y\widehat{Z\tilde F}$ is cofibrant in the fibrant-projective model structure on $\Sp^{\Sp}$ by Lemma~\ref{cofibrant representables}. Thus, the map $\ell_1\co\tilde F\to\MC(f)$ is a cofibration and the map $\MC(f)\to Y\widehat{Z(\tilde F)}$ is a weak equivalence. Left properness of the fibrant-projective model structure implies that $\MC(f) \to QF$ is a weak equivalence.

The advantage of using the mapping cylinder instead of the factorization into a cofibration followed by a trivial fibration, guaranteed by the model structure, is that the mapping cylinder construction is functorial. The construction of $QF$ still lacks functoriality, since the cofibrant replacements are not functorial in our model category, but the functoriality of the middle step is essential for the verification of various properties of $QF$ in Proposition~\ref{BF-conditions}.

To summarize, we describe the definition stage by stage.
\begin{definition}\label{Q-constr}
For every $F\in \Sp^{\Sp}$ we define $QF$ together with the coaugmentation map $i_{F}\colon F\to QF$ as follows:
\begin{itemize}
\item Choose a cofibrant approximation of $F$ to obtain $\tilde F$;
\item Factor the composition $f$ of the unit of the adjunction (\ref{adj-Sp}) with the map $Y(Z(\tilde F)\trivcofib \widehat{Z(\tilde F)})$ into a cofibration  followed by a weak equivalence in a functorial way: $\tilde F \cofib \MC(f) \we Y\widehat{Z(\tilde F)}$;
\item Put $QF = F\coprod_{\tilde F} \MC(f)$ with the induced map $i_{F}\colon F\to QF$.
\end{itemize}
We also define $Q$ on maps. Given a map between functors, we need to choose a map on their cofibrant replacements using the lifting axiom of the model structure. It is unique up to simplicial homotopy. The rest of the stages in the definition are functorial. Therefore, once the map of cofibrant replacements is chosen, $Qf$ is defined.
\end{definition}

This definition of $i_{F}\colon F\to QF$ gives rise to a homotopy localization construction as in Definition~\ref{non-func-loc}. It remains to check the conditions (\ref{natural})--(\ref{A6}) of the generalized Bousfield-Friedlander localization given in Theorem~\ref{main}. 

\begin{proposition}\label{hom-idemp-check}
The construction $Q$ from Definition~\emph{\ref{Q-constr}} is homotopy idempotent in the sense that $i_{QF}\colon QF\to QQF$ and $Q(i_F)\colon QF\to QQF$ are weak equivalences for all $F$.
\end{proposition}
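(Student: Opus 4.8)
The plan is to verify homotopy idempotence by exploiting that, although $Q$ is not functorial, it is built from a functorial middle step ($Y$ applied to a functorial cofibrant replacement in $\Sp$, followed by the functorial mapping cylinder) and from cofibrant replacements that are unique up to simplicial homotopy. First I would reduce to the cofibrant case: for general $F$ choose the cofibrant approximation $\tilde F \we F$; since the weak equivalences involved are fibrant-projective and the whole construction of $QF$ factors through $\tilde F$ (and since $QF = F \coprod_{\tilde F}\MC(f)$ with $\MC(f)\we Q F$ a weak equivalence by left properness), it suffices to show that $\tilde F \to Q\tilde F$ and $Q\tilde F\to QQ\tilde F$ agree up to weak equivalence when restricted to cofibrant inputs. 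So assume $F$ is cofibrant; then $\MC(f)\we QF$ and $QF \we Y\widehat{Z F}$, so in particular $QF$ is weakly equivalent to the (fibrant-projectively fibrant, since $Y$ is right Quillen) object $Y\widehat{ZF}$.

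Next I would identify $QQF$ up to weak equivalence. Applying $Q$ to $QF$: choose a cofibrant replacement $\widetilde{QF}\we QF$, form $f'\colon \widetilde{QF}\to Y\widehat{Z\widetilde{QF}}$, and observe that $QQF \we Y\widehat{Z\widetilde{QF}}$. Since $\widetilde{QF}\we QF \we Y\widehat{ZF}$ and $Z$ is left Quillen (so preserves weak equivalences between cofibrant objects), $Z\widetilde{QF} \we ZY\widehat{ZF}$; and the counit of the adjunction $(Z,Y)$ at the cofibrant-in-$\Sp$ object $\widehat{ZF}$, namely $ZY\widehat{ZF}\to \widehat{ZF}$, should be a weak equivalence — this is essentially the statement that $Y$ is homotopically fully faithful, which follows because $ZY \cong \Id$ on $\Sp^{\op}$ up to the relevant replacements (the composite $Z\circ Y$ is the identity functor on $\cat V^{\op}$ on the nose in the enriched adjunction, $Z(R^K) = \cat V^{\cat V}(R^K,\Id) \cong K$ by the enriched Yoneda lemma). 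Composing, $\widehat{Z\widetilde{QF}}\we \widehat{ZF}$, hence $Y\widehat{Z\widetilde{QF}}\we Y\widehat{ZF}$ since $Y$ is right Quillen and preserves weak equivalences between fibrant objects. Thus both $QF$ and $QQF$ are canonically weakly equivalent to $Y\widehat{ZF}$.

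Finally I would check that the two maps $i_{QF}$ and $Q(i_F)$ are the maps realizing this common weak equivalence. For $i_{QF}\colon QF\to QQF$: unwinding Definition~\ref{Q-constr}, $i_{QF}$ is the pushout-corner inclusion $QF \to QF\coprod_{\widetilde{QF}}\MC(f')$, which fits in a diagram with the weak equivalence $\MC(f')\we Y\widehat{Z\widetilde{QF}}$ and the coaugmentation $\widetilde{QF}\to \MC(f')$ factoring $f'$; chasing this and using 2-out-of-3 with the identifications above gives that $i_{QF}$ is a weak equivalence. For $Q(i_F)$: the map $i_F\colon F\to QF$ becomes, on cofibrant replacements, a map $\tilde F\to \widetilde{QF}$ which is itself a weak equivalence (both sides are cofibrant models of objects connected by the weak equivalence $F\we QF$), and the remaining stages of $Q$ are functorial, so $Q(i_F)$ is a weak equivalence by functoriality of the middle step together with the 2-out-of-3 property. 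I expect the main obstacle to be the bookkeeping around non-functoriality: one must be careful that the cofibrant-replacement choices, which are only unique up to simplicial homotopy, do not obstruct the comparison — the key technical point is that simplicially homotopic maps induce the same map in the homotopy category and that the mapping-cylinder step is genuinely functorial, so all ambiguity is absorbed into weak equivalences. A secondary point requiring care is confirming that the counit $ZY\widehat{ZF}\to\widehat{ZF}$ is a weak equivalence, i.e. that the enriched isomorphism $Z Y\cong \Id$ is compatible with the model structures; this should follow from the enriched Yoneda lemma as recalled in Section~\ref{adjunction}.
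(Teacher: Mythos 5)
Your overall strategy for $i_{QF}$ -- identify both $QF$ and $QQF$ with $Y\widehat{Z\tilde F}$ using the strict Yoneda identity $ZY\cong\Id$, the fact that $Y$ is right Quillen (so sends the fibrant approximation of the already-fibrant object $\widehat{Z\tilde F}\in\Sp^{\op}$ to a weak equivalence), and `2-out-of-3' -- is essentially the paper's diagram chase, and that half of your argument is fine in outline. The genuine gap is in your argument for $Q(i_F)$: you assert that $i_F\colon F\to QF$ is a weak equivalence, and hence that its cofibrant approximation $\tilde F\to\widetilde{QF}$ is a weak equivalence of cofibrant objects. This is false in general. The coaugmentation $i_F$ is only a $Q$-equivalence: $QF$ has the fibrant-projective homotopy type of $Y\widehat{Z\tilde F}$, which is typically not fibrant-projectively equivalent to $F$ (e.g.\ $F=A\wedge-$ for non-compact $A$, whose localization is $\hom(DA,-)$); if $i_F$ were always a weak equivalence, the $Q$-localization would be trivial. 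So you cannot conclude that $Q(i_F)$ is a weak equivalence merely because $Q$ preserves weak equivalences and the remaining stages are functorial. The repair is exactly where the paper puts its effort: take the cofibrant approximation of $i_F$ to be the composite $\tilde F\cofib\MC(f_{\tilde F})\to\widetilde{QF}$ and show that $Y\widehat{Z(-)}$ applied to it is a weak equivalence. Indeed, by $ZY=\Id$ the map $Z(f_{\tilde F})$ is precisely the fibrant-replacement map $Z\tilde F\to\widehat{Z\tilde F}$ in $\Sp^{\op}$, so $Y\widehat{Zf_{\tilde F}}$ is a weak equivalence; then `2-out-of-3' gives that $Y\widehat{Z}$ of the cofibrant approximation of $i_F$ is a weak equivalence, and the functoriality of the mapping-cylinder and pushout stages transports this to $Q(i_F)$.

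A secondary point needing care: when you invoke Ken Brown's lemma to get $Z\widetilde{QF}\we ZY\widehat{ZF}$ you need both ends cofibrant in the fibrant-projective structure, and Lemma~\ref{cofibrant representables} only guarantees cofibrancy of $R^{X}=Y(X)$ for \emph{fibrant} $X\in\Sp$, whereas $\widehat{ZF}$ is merely cofibrant in $\Sp$. The paper sidesteps this by never deriving $Z$ at $Y\widehat{Z\tilde F}$ directly, but by using the on-the-nose identity $ZY\widehat{Z\tilde F}=\widehat{Z\tilde F}$ and then applying $Y$ to weak equivalences of fibrant objects of $\Sp^{\op}$; you should phrase your comparison of $QQF$ with $Y\widehat{ZF}$ the same way rather than through a left-derived $Z$.
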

\begin{proof}
This is a simple diagram chase relying on Yoneda's lemma: $ZY(X)\cong X$ for all spectra $X$.
The map $i_{QF}$ is constructed as follows:
\[
\xymatrix{
\tilde F\ar@{->>}[dd]^{\dir{~}}\ar@{^(->}[dr]_{a}\ar[rr]^{f_{\tilde F}} & & Y\widehat{Z(\tilde F)}\ar[r]^-{\dir{~}}_-{m} & Y\bigl(ZY(Z\tilde F)^{\wedge}\bigr)^{\wedge} \ar@{=}[r] & Y\bigl((Z\tilde F)^{\wedge}\bigr)^{\wedge}\\
& \MC(f_{\tilde F}) \ar[ur]^{\dir{~}}_c\ar[d]^{\dir{~}} \ar[rr]^{\dir{~}}_k \ar[rd]^{\dir{~}}_{b}&  & Y\bigl(Z(\MC(f_{\tilde F}))\bigr)^{\wedge} \ar[d]^-{Y(Zb)^{\wedge}}_-{\dir{~}} \ar[u]_{Y(Zc)^{\wedge}}^{\dir{~}} & \\
F\ar[r]^{i_{F}}& QF \ar@{^(->}[dr]_{i_{QF}}^{\dir{~}} & \widetilde{QF}\ar@{->>}[l]_{\dir{~}}\ar[r]^{\dir{~}}_{f_{\widetilde{QF}}} \ar@{^(->}[dr]^{\dir{~}}_{l} & Y\bigl(Z\widetilde{QF}\bigr)^{\wedge} & \\
  &  & QQF & \MC(f_{\widetilde{QF}}).\ar[u]^{\dir{~}} \ar[l]_{\dir{~}} & 
}
\]
Here $(-)^{\wedge}$ replaces the hat notation for fibrant replacement.
We first conclude that the upper horizontal map $m=f_{Y(Z(\tilde F))^{\wedge}}$ is a weak equivalence, since this is an application of $Y$ on a fibrant approximation of a fibrant object. Next, we apply the `2-out-of-3' axiom to the lower horizontal maps $k$ and $f_{\widetilde{QF}}$ concluding that they are weak equivalences too. Finally we can see that $i_{QF}$ is a trivial cofibration as a cobase change of the trivial cofibration $l$, which is a weak equivalence by the `2-out-of-3' axiom again.

Now we turn to $Q(i_{F})$ which is depicted on the right in the diagram below.
It suffices to show that the map $\gamma=Y(Z\widetilde{i_F})^{\wedge}$ is a weak equivalence. 
One of the possibilities for choosing a cofibrant approximation to $i_{F}$ is to take the composition $ba$ from the commutative diagram above. 
Consider the following commutative diagram:
\[
\xymatrix@=19pt{
\tilde F \ar@/_30pt/[ddd]_-{\widetilde{i_F}}\ar[dd]_a \ar[rrrr]^{f_{\tilde F}}\ar[dr]^{f_{\tilde F}}& & & &Y\widehat{Z\tilde F}\ar[dl]_{\delta}\ar[dd]_{Y(Za)^{\wedge}}\ar@/^50pt/[ddd]^-{\gamma}\\
 & Y\widehat{Z\tilde F}\ar[rr]_-{f_{Y(Z\tilde F)^{\wedge}}} && Y\bigl(ZY\bigl(Z\tilde F\bigr)^{\wedge}\bigr)^{\wedge}\\
\MC(f_F)\ar[d]^{\dir{~}}_b \ar[ur]^{\dir{~}} \ar[rrrr]^{f_{\MC(f_{\tilde F})}} & & & & Y\bigl(Z(\MC(f_{\tilde F}))\bigr)^{\wedge}\ar[ul]_{\dir{~}} \ar[d]^-{\dir{~}}_-{Y(Zb)^{\wedge}}\\
\widetilde{QF}\ar[rrrr]^{f_{\widetilde{QF}}} & & & & Y(Z\widetilde{QF})^{\wedge}
}
\]
The three lower horizontal maps are weak equivalences, but this is irrelevant to the proof.
The map $\delta=Y\widehat{Zf_F}$ is a weak equivalence since it is weakly equivalent to a second fibrant replacement by Yoneda's lemma as used before: $ZY(X)\cong X$ for every spectrum $X$. Hence, the map $Y\widehat{Za}$ is a weak equivalence by the `2-out-of-3' property. Therefore, the composition $Y\widehat{Zb}\circ Y\widehat{Za}=Y\widehat{Z(ba)}=Y\widehat{Z\widetilde{i_F}}$ is a weak equivalence.
\end{proof}

The following proposition verifies condition \ref{natural}.

\begin{proposition}\label{A2-check}
Let $f\colon F\to G$ be a natural transformation of functors in $\Sp^{\Sp}$; then, $Q_f i_F = i_Gf$, i.e., the following square is commutative.
$$\xymatrix{
F \ar[r]^{i_F}\ar[d]_{f} & QF\ar[d]^{Qf}\\
G \ar[r]_{i_G}         & QG
}$$
\end{proposition}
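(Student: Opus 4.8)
The plan is to read off the commutativity of the square directly from the fact that, once a compatible choice of cofibrant approximations has been fixed, every remaining stage of Definition~\ref{Q-constr} is functorial, so that $Qf$ is literally the map induced on pushouts by a \emph{strictly} commuting morphism of pushout diagrams.

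First I would recall how $Qf$ is built. Write $p_F\colon\tilde F\trivfibr F$ and $p_G\colon\tilde G\trivfibr G$ for the chosen cofibrant approximations. One picks a lift $\tilde f\colon\tilde F\to\tilde G$ of $fp_F$ along $p_G$; such a lift exists because $\emptyset\to\tilde F$ is a cofibration and $p_G$ is a trivial fibration, and it satisfies $p_G\tilde f=fp_F$ on the nose (this is the "map induced on cofibrant replacements" of Definition~\ref{Q-constr}, unique only up to simplicial homotopy). Next I would check that $\tilde f$ fits into a commutative square with $f_{\tilde F}$, $f_{\tilde G}$ and $Y\widehat{Z\tilde f}$, i.e. that $f_{\tilde G}\circ\tilde f = Y\widehat{Z\tilde f}\circ f_{\tilde F}$. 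Unwinding $f_{\tilde F}=Y(Z(\tilde F)\trivcofib\widehat{Z(\tilde F)})\circ\eta_{\tilde F}$, this is an immediate paste of three commuting squares: naturality of the unit $\eta$ of the adjunction~(\ref{adj-Sp}), functoriality of the (functorial!) fibrant replacement in $\Sp^{\op}$, and functoriality of $Y$.

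Applying the mapping-cylinder construction — which, as emphasized after Definition~\ref{Q-constr}, is functorial — to this square produces a map $\MC(f_{\tilde F})\to\MC(f_{\tilde G})$ compatible with the cofibrations $\tilde F\cofib\MC(f_{\tilde F})$ and $\tilde G\cofib\MC(f_{\tilde G})$. Hence $(\tilde f,f)$ is a morphism from the pushout diagram $\bigl(F\xleftarrow{p_F}\tilde F\cofib\MC(f_{\tilde F})\bigr)$ to the pushout diagram $\bigl(G\xleftarrow{p_G}\tilde G\cofib\MC(f_{\tilde G})\bigr)$, all of whose squares commute strictly. By definition $QF$ and $QG$ are the respective pushouts, $i_F$ and $i_G$ are the structure maps out of $F$ and $G$, and $Qf$ is the induced map of pushouts. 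The identity $Qf\circ i_F=i_G\circ f$ is then exactly one of the two compatibility squares expressing $Qf$ as the map of colimits over the morphism $(\tilde f,f)$ of diagrams, so it holds on the nose.

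I do not expect a genuine obstacle here: the entire content is the bookkeeping of naturality and functoriality already present in each stage of Definition~\ref{Q-constr}. The only point worth stating explicitly is that $Qf$ is determined by $f$ only up to homotopy, through the choice of $\tilde f$; but for \emph{any} admissible such choice the square in the statement commutes strictly.
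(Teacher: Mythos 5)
Your proof is correct and takes essentially the same route as the paper: choose the map $\tilde f$ of cofibrant replacements so that it strictly covers $f$ (via the lifting property against the trivial fibration $\tilde G\trivfibr G$), and then observe that every remaining stage of Definition~\ref{Q-constr} --- the unit of the adjunction, the functorial fibrant replacement in $\Sp^{\op}$, $Y$, the mapping cylinder, and the pushout --- is functorial, so the square commutes on the nose. The paper's proof says exactly this, only more tersely; your unwinding of the intermediate commuting squares is just the explicit bookkeeping it leaves implicit.
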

\begin{proof}
Following the definition of $Qf$, we notice that the only non-functorial stage of the definition is computing the cofibrant replacement of the domain and the codomain of $f$.  But we choose a map $f'\colon \tilde F \to \tilde G$, so that the square
$$\xymatrix{
F \ar[d]_{f} & \tilde F \ar@{->>}[l]_{\dir{~}} \ar[d]^{f}\\
G          & \tilde G \ar@{->>}[l]^{\dir{~}}
}$$
becomes commutative. The remaining steps in the definition are functorial, and hence we end up with the required commutative square.
\end{proof}

Our next goal is to verify that $Q$ satisfies conditions \ref{retract} and \ref{2-out-of-3}. Again, the verification would be immediate if $Q$ were a functorial localization construction. Our approach to this question is to show that $Q$ induces  a functor on the level of homotopy category. Let $\Gamma\colon \Sp^{\Sp} \to {\textup{Ho}}(\Sp^{\Sp})$ be the canonical functor.
\begin{lemma}\label{funct}
The $Q$-construction is a functor up to homotopy: The composition $\Gamma Q\colon \Sp^{\Sp} \to {\textup{Ho}}(\Sp^{\Sp})$ is a functor too.
\end{lemma}
\begin{proof}
For any commutative triangle
\begin{equation}\label{triangle}
\xymatrix{
                & B
                    \ar[dr]^{g}\\
A
\ar[ur]^{f}
\ar[rr]_{h}
               & & C
}
\end{equation}
in $\Sp^{\Sp}$, we have to show that the triangle

\begin{equation}\label{Q-triangle}
\xymatrix{
                & QB
                    \ar[dr]^{Qg}\\
QA
\ar[ur]^{Qf}
\ar[rr]_{Qh}
               & & QC
}
\end{equation}
is commutative up to homotopy, i.e., if we apply on it the functor $\Gamma$, we obtain a commutative triangle in $\Ho(\Sp^{\Sp})$.

We will follow the stages of the construction of triangle (\ref{Q-triangle}) and make sure that at each stage the commutativity is preserved up to homotopy. Recall that the fibrant-projective model structure on $\Sp^\Sp$ is simplicial by Remark~\ref{rem:simplicial-structure}.

The first stage is applying a cofibrant replacement on the vertices  of triangle (\ref{triangle}) obtaining the following triangle with the edges constructed using the lifting axiom.
\begin{equation}\label{cofib-triangle}
\xymatrix{
                & \tilde B
                    \ar[dr]^{\tilde g}\\
\tilde A
\ar[ur]^{\tilde f}
\ar[rr]_{\tilde h}
               & & \tilde C
}
\end{equation}
Triangle (\ref{cofib-triangle}) is commutative up to simplicial homotopy by \cite[Prop.~9.6.1]{Hirschhorn}, since the maps $\tilde h$ and $\tilde g\tilde f$ are the lifts in the commutative square
\[
\xymatrix{
\emptyset
\ar@{^{(}->}[d]
\ar[r] 	& \tilde C
		    \ar@{->>}[d]^{\dir{~}}\\
\tilde A
\ar[r]		& C.
}
\]
The next stage in the construction of $Q$ is the application of simplicial functors $Z$,$Y$ and the functorial cofibrant replacement in spectra (fibrant replacement in $\Sp^{\op}$) in between. Simplicial functors preserve simplicial homotopies of maps. Cofibrant replacement in any simplicial model category
allows for the lift of simplicial homotopy: if $J\in \sS$ is a generalized interval, the simplicial homotopy of $Z\tilde h$ and $Z\tilde f Z\tilde g$ is a map $H\colon Z\tilde A \to Z\tilde C^{J}$, such that $\ev_{0}H = Z\tilde h$ and $\ev_{1}H = Z\tilde gZ\tilde f$, and hence $H$ can be lifted to a simplicial homotopy $\tilde H\colon  \widetilde{Z\tilde A} \to \widetilde{Z\tilde C}^J$
\[
\xymatrix{
\emptyset
\ar@{^{(}->}[d]
\ar[rr] 	&		& \widetilde{Z\tilde C}^{J}
		    \ar@{->>}[d]^{\dir{~}}\\
\widetilde{Z\tilde A}
\ar@{->>}[r]^{\dir{~}}
\ar@{..>}[urr]^{\tilde H}	&
			Z\tilde A
			\ar[r]_{H}	& Z\tilde C^{J},
}
\]
so that each of the simplicially homotopic maps $\ev_{0}\tilde H$ and $\ev_{1}\tilde H$ is a lift to the cofibrant replacements of the maps $Z\tilde h$ and $Z\tilde f Z\tilde g$, respectively. On the other hand, the maps $\ev_{0}\tilde H$ and $\ev_{1}\tilde H$ are simplicially homotopic to the functorially induced maps of cofibrant replacements in $\Sp$, i.e., the maps $\widetilde{Z\tilde h}$ and $\widetilde{Z\tilde g}\widetilde{Z\tilde f}$ are simplicially homotopic by transitivity of the simplicial homotopy relation.

So far, we have obtained two triangles commutative up to simplicial homotopy with a natural map between them:
\[
\xymatrix{
		&	& Y\widehat{Z\tilde B}
				\ar[dr]^{Y\widehat{Z\tilde g}}\\
		& Y\widehat{Z\tilde A}
			\ar[ur]^{Y\widehat{Z\tilde f}}
			\ar[rr]_{Y\widehat{Z\tilde h}}
               &  & Y\widehat{Z\tilde C}\\
                & \tilde B
                    \ar[dr]^{\tilde g}
                    \ar@{..>}[uur]\\
\tilde A
\ar[ur]^{\tilde f}
\ar[rr]_{\tilde h}
\ar@{..>}[uur]
               & & \tilde C
               		\ar@{..>}[uur]
}
\]
The completion of the localization construction involves factoring the dotted maps into cofibrations followed by a weak equivalence and then applying the cobase change. Both operations are natural and change only the commuting triangle in the homotopy category up to a natural isomorphism, preserving the commutativity.
\end{proof}

\begin{proposition}\label{A3-A4-check}
The localization construction $Q$ satisfies conditions \emph{\ref{retract}} and \emph{\ref{2-out-of-3}}.
\end{proposition}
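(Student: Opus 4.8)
The plan is to leverage Lemma~\ref{funct}: even though $Q$ is not a functor on $\Sp^{\Sp}$, the composite $\Gamma Q\colon\Sp^{\Sp}\to\Ho(\Sp^{\Sp})$ is one. Combined with the standard model-category fact that a morphism is a weak equivalence precisely when its image under the localization functor $\Gamma$ is an isomorphism, this turns both conditions into formalities about isomorphisms in the category $\Ho(\Sp^{\Sp})$. Concretely, writing $\Gamma$ for the canonical functor to $\Ho(\Sp^{\Sp})$, a map $f$ in $\Sp^{\Sp}$ has the property that $Qf$ is a weak equivalence if and only if $\Gamma(Qf)$ is an isomorphism.

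For condition~\ref{2-out-of-3}, I would take a commutative triangle $h=gf$ in $\Sp^{\Sp}$ and apply $\Gamma Q$; by Lemma~\ref{funct} this yields a genuinely commutative triangle $\Gamma(Qh)=\Gamma(Qg)\circ\Gamma(Qf)$ in $\Ho(\Sp^{\Sp})$. Since in any category the class of isomorphisms satisfies two-out-of-three, if two of $\Gamma(Qf),\Gamma(Qg),\Gamma(Qh)$ are invertible then so is the third; translating back through the equivalence above gives two-out-of-three for the class of maps that $Q$ carries to weak equivalences.

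For condition~\ref{retract}, I would start from a retract diagram exhibiting $f$ as a retract of $g$ in the arrow category of $\Sp^{\Sp}$, that is, a pair of commuting squares whose horizontal composites are identities, and again apply the functor $\Gamma Q$. Any functor carries such a diagram to a retract diagram, so $\Gamma(Qf)$ is a retract of $\Gamma(Qg)$ in the arrow category of $\Ho(\Sp^{\Sp})$; and a retract of an isomorphism is an isomorphism. Hence if $Qg$ is a weak equivalence then so is $Qf$.

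The place where care is needed, and the reason Lemma~\ref{funct} is indispensable, is exactly the non-functoriality of $Q$: one cannot argue directly that ``$Q$ applied to a retract is a retract'' or that $Q(gf)=Q(g)Q(f)$ on the nose, since the cofibrant-replacement step in Definition~\ref{Q-constr} is functorial only up to simplicial homotopy. Passing to $\Ho(\Sp^{\Sp})$ removes this difficulty, after which both verifications are purely formal. The only residual point to confirm is that the statements of conditions~\ref{retract} and~\ref{2-out-of-3} in the generalized Bousfield--Friedlander theorem~\ref{main} are indeed the retract-closure and two-out-of-three properties of the class of maps inverted by $Q$; granting this, there is no substantive obstacle.
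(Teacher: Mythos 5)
Your proposal is correct and follows exactly the paper's route: the paper's proof of Proposition~\ref{A3-A4-check} is simply that it follows immediately from Lemma~\ref{funct}, and your argument spells out the same mechanism, namely that $f$ is a $Q$-equivalence iff $\Gamma(Qf)$ is an isomorphism, so functoriality of $\Gamma Q$ reduces both conditions to the closure of isomorphisms in $\Ho(\Sp^{\Sp})$ under retracts and two-out-of-three.
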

\begin{proof}
It follows immediately from Lemma~\ref{funct}.
\end{proof}

The following property is reminiscent of functoriality and verifies \ref{new_funct}.

\begin{proposition}\label{prop-comm-cube}
For every commutative square of small functors
\begin{equation}\label{comm-square}
\xymatrix{
A \ar[r]^{h}\ar[d]_{f} & X\ar[d]^{g}\\
B \ar[r]_{k}         & Y
}
\end{equation}
there exists a commutative cube
\begin{equation}\label{comm-cube}
\xymatrix@=15pt{
          & Q'A\ar[rr]^{h'}\ar'[d][dd]^{f'}& &Q'X\ar[dd]^{g'}\\
A \ar[rr]^>>>>>>>>{h}\ar[dd]_{f}\ar[ur]& & X\ar[dd]^<<<<<<{g}\ar[ur]\\
          & Q'B\ar'[r]_>>>>{k'}[rr]& &Q'Y\\
B \ar[rr]_{k}\ar[ur]        & & Y\ar[ur]
}
\end{equation}
for some choice of $Q'A\simeq QA$, $Q'B \simeq QB$, $Q'X \simeq QX$, $Q'Y \simeq QY$. Moreover, every edge of the cube connecting the front face with the back face factors through the corresponding $Q$ construction, i.e., $A \to QA\we Q'A$, $B\to QB \we Q'B$, $X\to QX\we QX'$, and $Y\to QY\we Q'Y$.
\end{proposition}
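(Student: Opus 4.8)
The strategy is to notice that in Definition~\ref{Q-constr} the \emph{only} non-functorial ingredient is the choice of cofibrant replacement: every subsequent stage — the unit $\eta$ of the adjunction (\ref{adj-Sp}), the functor $Z$, the functorial fibrant replacement in $\Sp^{\op}$, the functor $Y$, the functorial mapping cylinder, and the pushout $QF=F\coprod_{\tilde F}\MC(f_{\tilde F})$ — is a genuine functor, hence carries commutative cubes to commutative cubes. So it suffices to cover the square (\ref{comm-square}) by a \emph{strictly} commutative square of cofibrant approximations fitting into a commutative cube over it, and then to transport that cube through the functorial part of the $Q$-construction.

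First I would rigidify the cofibrant replacements. Regarding (\ref{comm-square}) as a diagram on the poset $[1]\times[1]$, I would perform a latching (Reedy) cofibrant replacement: choose a cofibrant $\tilde A$ with a trivial fibration $\tilde A\trivfibr A$; factor $\tilde A\to A\xrightarrow{f}B$ as a cofibration $\tilde f\colon\tilde A\cofib\tilde B$ followed by a trivial fibration $\tilde B\trivfibr B$, and likewise factor $\tilde A\to A\xrightarrow{h}X$ as $\tilde h\colon\tilde A\cofib\tilde X$ followed by $\tilde X\trivfibr X$; form the pushout $P=\tilde X\coprod_{\tilde A}\tilde B$, which is cofibrant since its structure maps are cobase changes of the cofibrations $\tilde f,\tilde h$; observe that the identity $gh=kf$ in the front square forces the composites $\tilde X\trivfibr X\xrightarrow{g}Y$ and $\tilde B\trivfibr B\xrightarrow{k}Y$ to agree on $\tilde A$, hence to induce a canonical map $P\to Y$; and finally factor $P\to Y$ as $P\cofib\tilde Y\trivfibr Y$, defining $\tilde g,\tilde k$ as the composites through $P$. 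The back square commutes on the nose because it factors through the pushout $P$, and the four side faces commute by construction of the factorizations; thus one obtains a strictly commutative cube whose front face is (\ref{comm-square}), whose back face is the $\tilde W$-square ($W\in\{A,B,X,Y\}$), and whose connecting edges are the trivial fibrations $\tilde W\trivfibr W$.

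Next I would feed this cube through the functorial part of the $Q$-construction: applying successively $\eta$, $Z$, the functorial fibrant replacement in $\Sp^{\op}$, $Y$, and then the functorial factorization $\tilde W\cofib\MC(f_{\tilde W})\we Y\widehat{Z\tilde W}$, one obtains, naturally in $\tilde W$, a cospan consisting of the trivial fibration $\tilde W\trivfibr W$ of Step~1 and the cofibration $\tilde W\cofib\MC(f_{\tilde W})$. Since every operation used is functorial, these cospans — together with the lifts $\tilde f,\tilde g,\tilde h,\tilde k$ and the induced maps on the mapping cylinders — assemble into a commutative square in the category of cospans lying over (\ref{comm-square}). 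Taking pushouts of cospans, itself a functor, produces a commutative square with vertices the honest $Q$-construction values $QW=W\coprod_{\tilde W}\MC(f_{\tilde W})$, edges the induced maps $Qf,Qg,Qh,Qk$, and connecting edges the coaugmentations $i_W\colon W\to QW$. This is precisely the cube (\ref{comm-cube}) with $Q'W:=QW$ and $h'=Qh$ etc.; every connecting edge is a $Q$-coaugmentation, so the ``moreover'' clause holds trivially (and if a particular $QW$ has been fixed beforehand, one post-composes $i_W$ with the essentially unique weak equivalence onto it).

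I expect the main obstacle to be Step~1: an arbitrary lift of (\ref{comm-square}) to cofibrant objects commutes only up to simplicial homotopy — exactly the phenomenon already met in the proof of Lemma~\ref{funct} — so genuine rigidification is needed, and the latching/pushout recipe above is the standard remedy. The one delicate point there is to keep $\tilde f$ (and $\tilde h$) cofibrations, so that $P$, and hence $\tilde Y$, are cofibrant and the resulting $QW$ are bona fide instances of Definition~\ref{Q-constr}. Everything after Step~1 is purely formal, being the application of functors to a commutative cube.
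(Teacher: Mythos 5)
Your proof is correct, and its overall skeleton is the one the paper uses: isolate the single non-functorial stage (the choice of cofibrant replacements), arrange that the lifted square commutes \emph{strictly}, and then push the resulting cube through the functorial remainder of Definition~\ref{Q-constr} (unit, $Z$, functorial fibrant replacement in $\Sp^{\op}$, $Y$, mapping cylinder, pushout). Where you differ is in how the strictification is achieved. The paper starts from arbitrary cofibrant replacements, observes that only the back face fails to commute and only up to simplicial homotopy, and then corrects it with a cylinder object: it replaces $\tilde A$, $\tilde B$, $\tilde X$ by $\tilde A\wedge I$, $\tilde B\coprod_{\tilde A}\tilde A\wedge I$, $\tilde X\coprod_{\tilde A}\tilde A\wedge I$, which are trivially cofibrantly equivalent to the originals; this is why the statement only asserts $Q'W\simeq QW$ together with the factorization $W\to QW\we Q'W$. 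You instead build strictly compatible cofibrant replacements from scratch by a Reedy/latching-style induction (factor $\tilde A\to A\to B$ and $\tilde A\to A\to X$ as cofibration followed by trivial fibration, form the cofibrant pushout $P=\tilde X\coprod_{\tilde A}\tilde B$, use $gh=kf$ to get $P\to Y$, and factor it to obtain $\tilde Y$), so commutativity of the back face is automatic and the cube's vertices are honest values of the $Q$-construction for these choices. Each approach buys something: the paper's cylinder trick works with whatever replacements were used to define $QW$, at the price of landing in objects only weakly equivalent to $QW$; yours lands on genuine $Q$-values but, when $QW$ has been fixed beforehand with a different replacement $\tilde W_0$, you still owe the comparison map — concretely, lift $\id_W$ to a weak equivalence $\tilde W_0\to\tilde W$ over $W$, and functoriality of the remaining stages plus left properness and 2-out-of-3 give a weak equivalence $QW\to Q'W$ under $W$, so that $W\to QW\we Q'W$ is the cube's connecting edge. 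Your parenthetical ``essentially unique weak equivalence'' is exactly this point and should be spelled out, but it is routine rather than a gap.
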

\begin{proof}
Given a commutative square (\ref{comm-square}), we will go through the stages of Definition~\ref{Q-constr} and make sure that the commutativity of the diagram can be resolved at each stage of the construction, so that at the end we obtain the commutative cube (\ref{comm-cube}).

The first stage is to take cofibrant replacements of all the vertexes of the commutative square (\ref{comm-square}). Since $\Sp^\Sp$ is a simplicial model category by Remark~\ref{rem:simplicial-structure}, the lifts existing by Quillen's MC5 are unique up to simplicial homotopy. In other words, for any choice of cofibrant replacements of the entries in our commutative square (\ref{comm-square}), the maps between them may be constructed using MC5 and the obtained cube will be commutative, except for the back face, which will commute up to simplicial homotopy
\[
\xymatrix@=15pt{
          & \tilde A\ar[rr]\ar'[d][dd]\ar@{->>}[dl]_{\dir{~}}& & \tilde X\ar[dd]\ar@{->>}[dl]_{\dir{~}}\\
A \ar[rr]\ar[dd]& & X\ar[dd]\\
          & \tilde B\ar'[r][rr]\ar@{->>}[dl]_{\dir{~}}& &\tilde Y\ar@{->>}[dl]_{\dir{~}},\\
B \ar[rr]        & & Y
}
\]
since the two possible maps $\tilde A\to \tilde Y$ form a lift in the commutative square
\[
\xymatrix{
\emptyset \ar@{^(->}[rr] \ar@{^(->}[d]& & \tilde Y\ar@{->>}[d]^{\dir{~}}\\
\tilde A \ar@{->>}[r]^{\dir{~}} & A\ar[r] & Y.
}
\]

In other words, there exists a cylinder object $\tilde A\wedge I$ such that the diagram
\[
\xymatrix@=18pt{
  &\tilde A \ar@{^(->}[d]^-{\dir{~}} \ar[r]&  \tilde X\ar[dd]\\
\tilde A \ar@{^(->}[r]^-{\dir{~}}\ar[d]& \tilde A\wedge I \ar[dr]\\
\tilde B \ar[rr] & & \tilde Y
}
\]
is commutative.

Thus we can alternate the original choice of the cofibrant replacements so that the whole cube will become commutative. Replace the back face of the original cube by the following (dotted) commutative square:
\begin{equation}\label{corrected-back-face}
\xymatrix{
  &\tilde A \ar@{^(->}[d]^{\dir{~}}_{i_{1}} \ar[r]&  \tilde X\ar@{^(->}[d]^{\dir{~}}\\
\tilde A \ar@{^(->}[r]^{\dir{~}}_{i_{0}}\ar[d]& \tilde A\wedge I \ar[dr] \ar[d]\ar[r]& \tilde X\coprod_{\tilde A}\tilde A\wedge I \ar[d]\\
\tilde B \ar@{^(->}[r]^<<<<{\dir{~}} & \tilde B\coprod_{\tilde A}\tilde A\wedge I \ar[r]& \tilde Y
\save "2,2"."3,3"*[F.]\frm{}
\restore
} 
\end{equation}
The possibility of incorporating this commutative square into the original commutative cube is given by the map $\tilde A\wedge I\to \tilde A$ left inverse to both $i_{0}$ and $i_{1}$. It also ensures that all the old vertices  of the cube are retracts on the new ones.

We obtain the commutative cube
\begin{equation}\label{cofibr-cube}
\xymatrix@=15pt{
          & \tilde A'\ar[rr]\ar'[d][dd]\ar[dl]_{\dir{~}}& & \tilde X'\ar[dd]\ar[dl]_{\dir{~}}\\
A \ar[rr]\ar[dd]& & X\ar[dd]\\
          & \tilde B'\ar'[r][rr]\ar[dl]_{\dir{~}}& &\tilde Y\ar@{->>}[dl]_{\dir{~}},\\
B \ar[rr]        & & Y
}
\end{equation}
where $\tilde A' = \tilde A\wedge I$, $\tilde B'=\tilde B\coprod_{\tilde A}\tilde A\wedge I$, and $\tilde X'=\tilde X\coprod_{\tilde A}\tilde A\wedge I$.

Note that all the new vertexes of the commutative cube are related to the old ones by trivial cofibrations. The rest of the stages of Definition~\ref{Q-constr} are functorial, and hence they produce the required commutative cube (\ref{comm-cube}), and turn the trivial cofibrations between the old and the new vertices into weak equivalences, factorizating the maps between the vertices in the front and in the back face of this cube through the corresponding $Q$-constructions.
\end{proof}

The following explicit characterization of $Q$-equivalences facilitates the verification of the rest of the conditions required from $Q$-construction by Theorem~\ref{main}.
\begin{proposition}\label{alt-classification}
A map $f\colon F\to G$ of small functors is a $Q$-equivalence from Definition~\emph{\ref{non-func-loc}} if and only if for any cofibrant replacement $\tilde f\colon \tilde F\to \tilde G$ of $f$, the induced map $\hom(\tilde f, \Id_{\Sp})\colon \hom(\tilde G, \Id_{\Sp})\to\hom(\tilde F,\Id_{\Sp})$ is a weak equivalence of spectra.
\end{proposition}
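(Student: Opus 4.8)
The plan is to unwind the definition of $Q$-equivalence from Definition~\ref{non-func-loc} and identify it with the stated condition on $\hom(-,\Id_{\Sp})$. Recall that $f\colon F\to G$ is a $Q$-equivalence precisely when $Qf\colon QF\to QG$ is a fibrant-projective weak equivalence. By the construction in Definition~\ref{Q-constr}, for a cofibrant approximation $\tilde f\colon \tilde F\to\tilde G$ of $f$, the map $QF\to QG$ fits into a diagram where $QF$ receives a weak equivalence from $\MC(f_{\tilde F})$, which in turn is weakly equivalent to $Y\widehat{Z(\tilde F)}$, and likewise for $G$. Since the mapping-cylinder and cobase-change stages are all functorial and produce weak equivalences (as verified in the discussion preceding Definition~\ref{Q-constr} using left properness), $Qf$ is a weak equivalence if and only if the induced map $Y\widehat{Z(\tilde f)}\colon Y\widehat{Z(\tilde F)}\to Y\widehat{Z(\tilde G)}$ is a fibrant-projective weak equivalence.

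The next step is to strip off the outer functors $Y$ and $\widehat{(-)}$. The fibrant replacement $\widehat{(-)}$ in $\Sp^{\op}$ (equivalently cofibrant replacement in $\Sp$) is a weak equivalence, and $Z(\tilde F)$, $Z(\tilde G)$ are cofibrant in $\Sp^{\op}$ since $Z$ is left Quillen and $\tilde F,\tilde G$ are cofibrant; thus by the two-out-of-three property, $Y\widehat{Z(\tilde f)}$ is a fibrant-projective weak equivalence if and only if $YZ(\tilde f)\colon YZ(\tilde F)\to YZ(\tilde G)$ is. Now I invoke the fact that $Y\colon\Sp^{\op}\to\Sp^{\Sp}$ is right Quillen (Corollary~\ref{Quillen-adjunction}) and that the objects $Z(\tilde F)$, $Z(\tilde G)$ are cofibrant in $\Sp^{\op}$, i.e.\ fibrant in $\Sp$: between such objects, $Y$ detects and preserves weak equivalences. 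More concretely, $YZ(\tilde f) = Y(Z(\tilde f)^{\op})$ is a fibrant-projective weak equivalence iff for every fibrant $X\in\Sp$ the map $R^{Z(\tilde G)}(X)\to R^{Z(\tilde F)}(X)$, i.e.\ $\hom_{\Sp}(Z(\tilde G),X)\to\hom_{\Sp}(Z(\tilde F),X)$, is a weak equivalence of spectra; taking $X$ to run over fibrant objects (in particular $X=\hat S$ and products of copies of $\hat S$, or just using that $Z(\tilde F)\to Z(\tilde G)$ is a map of fibrant spectra) this is equivalent to $Z(\tilde f)\colon Z(\tilde F)\to Z(\tilde G)$ being a weak equivalence in $\Sp$. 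Unwinding $Z(F)=\hom(F,\Id_{\Sp})$ gives exactly the stated criterion: $\hom(\tilde f,\Id_{\Sp})$ is a weak equivalence of spectra.

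Finally I should check independence of the choice of cofibrant approximation $\tilde f$: any two cofibrant approximations of $f$ are connected by a weak equivalence of maps, and $Z$, being left Quillen, sends weak equivalences between cofibrant objects to weak equivalences, so the condition ``$\hom(\tilde f,\Id_{\Sp})$ is a weak equivalence'' holds for one cofibrant approximation iff it holds for all. This makes the phrasing ``for any cofibrant replacement'' legitimate.

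The main obstacle I anticipate is the passage back and forth across the Yoneda embedding: one must be careful that $Y$ reflects weak equivalences when evaluated on the relevant class of objects. This is not automatic for a general right Quillen functor, but here it works because $Z(\tilde F)$ and $Z(\tilde G)$ are fibrant spectra and a fibrant-projective weak equivalence $R^B\to R^A$ between representables at fibrant spectra forces $A\to B$ to be a weak equivalence (evaluate at a fibrant $X$ large enough, or use the co-Yoneda-type argument that $R^{(-)}$ on fibrant spectra is homotopically full and faithful — essentially the content underlying Theorem~\ref{Yoneda Quillen equivalence}). Making this reflection step precise, while only using results available at this point in the paper, is the one place that requires genuine care rather than routine bookkeeping.
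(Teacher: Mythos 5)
Your overall strategy is the one the paper intends (its own proof is just ``readily follows from the construction of $Q$''): use the weak equivalences $QF\leftarrow\MC(f)\rightarrow Y\widehat{Z(\tilde F)}$ supplied by Definition~\ref{Q-constr} and left properness to see that $Qf$ is a weak equivalence iff $Y\widehat{Z(\tilde f)}$ is, then strip off $Y$ and the replacement; and your closing remark on independence of the choice of $\tilde f$ is fine. However, the middle of your chain contains a genuine error: you have swapped the roles of fibrant and cofibrant objects relative to the right Quillen functor $Y\colon\Sp^{\op}\to\Sp^{\Sp}$. The fibrant objects of $\Sp^{\op}$ are the \emph{cofibrant} spectra, and those are the objects between which $Y$ preserves weak equivalences; $Z(\tilde F)$ is cofibrant in $\Sp^{\op}$, i.e.\ merely fibrant in $\Sp$, which buys you nothing. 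Consequently (a) the two-out-of-three step ``$Y\widehat{Z(\tilde f)}$ is a fibrant-projective equivalence iff $YZ(\tilde f)$ is'' is unjustified: the comparison $YZ(\tilde F)\to Y\widehat{Z(\tilde F)}$ is $Y$ applied to a weak equivalence whose source is not fibrant in $\Sp^{\op}$, and there is no reason it is a fibrant-projective equivalence --- indeed the whole point of inserting the hat in Definition~\ref{Q-constr} is that $Y$ cannot be trusted on such maps; and (b) the detection claim that a fibrant-projective equivalence $R^B\to R^A$ with $A,B$ only \emph{fibrant} spectra forces $A\to B$ to be a weak equivalence is also unsupported: for non-cofibrant $A$ the spectrum $\hom(A,X)$ is not homotopy invariant in $A$, even for fibrant $X$, in a general monoidal model of spectra, so evaluating at fibrant objects (in particular at $\hat S$) says nothing about $Z(\tilde f)$ directly.

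The repair is to keep the hat and do the stripping in the opposite order. The spectra $\widehat{Z(\tilde F)},\widehat{Z(\tilde G)}$ are cofibrant, i.e.\ fibrant in $\Sp^{\op}$, so one direction ($\widehat{Z(\tilde f)}$ a weak equivalence $\Rightarrow$ $Y\widehat{Z(\tilde f)}$ a fibrant-projective equivalence) is Ken Brown's lemma for the right Quillen functor $Y$. Conversely, if $Y\widehat{Z(\tilde f)}$ is a fibrant-projective equivalence, evaluate at all fibrant $X$: since the sources are cofibrant these are derived mapping spectra, so $[\widehat{Z(\tilde G)},X]\to[\widehat{Z(\tilde F)},X]$ is bijective for every $X$, and the ordinary Yoneda lemma in $\Ho(\Sp)$ shows $\widehat{Z(\tilde f)}$ is a weak equivalence. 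Now two-out-of-three in $\Sp$, using the cofibrant-replacement equivalences $\widehat{Z(\tilde F)}\we Z(\tilde F)$ and $\widehat{Z(\tilde G)}\we Z(\tilde G)$, identifies this with $Z(\tilde f)=\hom(\tilde f,\Id_{\Sp})$ being a weak equivalence, which is the stated criterion. With that substitution your argument is complete.
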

\begin{proof}
Readily follows from the construction of $Q$.
\end{proof}

Everything we have said so far may be said about the category of small functors from spaces to spaces. The next proposition uses the properties of the stable model category in an essential way. We do not know if its analog is true in the category of small functors from spaces to spaces. 

Note that the (fibrant-)projective model structure on $\Sp^\Sp$ is stable since weak equvalences, homotopy pushouts and homotopy pullbacks are objectwise (resp., in fibrant objects), and spectra form a stable model category.

\begin{proposition}\label{A6-check}
A base change of a $Q$-equivalence along  a $Q$-fibration is a $Q$-equivalence.
\end{proposition}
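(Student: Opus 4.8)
The plan is to trade the (non-functorial) $Q$-construction for the total left derived functor $\mathbb{L}Z$ of $Z=\hom(-,\Id_{\Sp})\colon\cat{F}\to\Sp^{\op}$, using the explicit description of $Q$-equivalences together with, in an essential way, the stable structure. Recall that $Z$ is left Quillen (Corollary~\ref{Quillen-adjunction}) and that, by Proposition~\ref{alt-classification}, a map $u$ of small functors is a $Q$-equivalence precisely when $\mathbb{L}Z(u)$ is a weak equivalence of spectra. The key point is that the fibrant-projective model category $\cat{F}=\Sp^{\Sp}$ is itself stable: since $\Sp$ is stable and fibrant-projective weak equivalences, fibrations and the relevant homotopy (co)limits are all detected by evaluation on the fibrant objects of $\Sp$, the canonical map $X\to\Omega\Sigma X$ is a fibrant-projective equivalence for cofibrant $X$. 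Consequently $\mathbb{L}Z$ is an \emph{exact} functor between stable homotopy categories --- being a left derived functor of a left Quillen functor it preserves homotopy cofibre sequences, and since on both sides homotopy cofibre and homotopy fibre sequences agree, it preserves homotopy fibre sequences too --- so $\mathbb{L}Z(u)$ is a weak equivalence if and only if $\mathbb{L}Z(\mathrm{hofib}(u))$ is contractible.

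Now let $p\colon E\to B$ be a $Q$-fibration, $g\colon X\to B$ a $Q$-equivalence, form the pullback $P=X\times_B E$, and let $h\colon P\to E$ be the base change of $g$. A $Q$-fibration is in particular a fibrant-projective fibration, and the fibrant-projective structure is right proper (Theorem~\ref{fibrant-projective}, as $\Sp$ is), so the pullback square is a homotopy pullback square in $\cat{F}$. In a homotopy pullback square the homotopy fibres of the two parallel maps agree over corresponding points, in particular over the zero sections, so $\mathrm{hofib}(h)\simeq\mathrm{hofib}(g)$. Since $g$ is a $Q$-equivalence, $\mathbb{L}Z(\mathrm{hofib}(g))$ is contractible by the previous paragraph; hence so is $\mathbb{L}Z(\mathrm{hofib}(h))$; hence $\mathbb{L}Z(h)$ is a weak equivalence, i.e., $h$ is a $Q$-equivalence.

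The one genuine obstacle is the claim that the fibrant-projective model structure on $\cat{F}$ is stable --- equivalently, that $\mathbb{L}Z$ is exact --- which has to be verified by evaluating suspensions, loops and the relevant (co)fibre sequences on fibrant objects of $\Sp$; granting this, the rest is a short manipulation of homotopy fibre sequences. Two points worth recording: only the fibration property of $p$, together with right properness, is used, not the full strength of being a $Q$-fibration; and the argument has no counterpart for small functors from spaces to spaces, since $\calS^{\calS}$ is not stable.
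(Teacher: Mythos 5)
Your proof is correct and is essentially the paper's own argument in different packaging: both rest on the pullback square being a homotopy pullback (a $Q$-fibration is in particular an underlying fibration, plus right properness), on the stability of the fibrant-projective structure on $\Sp^{\Sp}$ (the paper's unproved assertion that homotopy pullbacks there are homotopy pushouts, which you rightly flag as the one point needing verification), and on Proposition~\ref{alt-classification}, i.e.\ that $\mathbb{L}Z$ detects $Q$-equivalences. Where the paper replaces the square by an explicit pushout of cofibrant objects along a cofibration and applies $Z$ to obtain a homotopy pullback (hence, by stability of $\Sp$, a homotopy pushout) of spectra with one leg a weak equivalence, you express the same mechanism through exactness of $\mathbb{L}Z$ between stable homotopy categories and the identification of homotopy fibres in a homotopy pullback --- a cosmetic rather than substantive difference.
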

\begin{proof}
We would like to apply Proposition~\ref{alt-classification} in order to check if the map $f\colon A\to B$ in the pullback square
\[
\xymatrix{
A\ar[d]_{f}\ar[r]^{g} & B\ar[d]^{Q\text{-eq.}}\\
C\ar@{->>}[r]_{Q\text{-fib.}}^{f'} & D
}
\]
is a $Q$-equivalence. The $Q$-fibration $f'$ is a fibrant-projective fibration by Theorem~\ref{main} and the square is a homotopy pullback.
Let $\tilde f\colon A'\to C'$ be  a cofibrant approximation of $f$ with a weak equivalence of maps $(r_{A},r_{B})\colon \tilde f\to f$. Factor the composition $gr_{A}$ into a cofibration $g'\colon A'\hookrightarrow B'$ followed by a weak equivalence $q\colon B' \tilde\to B$. Let $D'=B'\coprod_{A'} C'$. Then the induced map $D'\to D$ is a weak equivalence, since in the fibrant-projective model category on $\Sp^{\Sp}$ homotopy pullbacks are also homotopy pushouts.  Therefore, the following pushout square of cofibrant objects is levelwise weakly equivalent to the original square:
\[
\xymatrix{
A'\ar[d]_{\tilde f}\ar@{^{(}->}[r]^{g'} & B'\ar[d]^{Q\text{-eq.}}\\
C'\ar[r] & D'.
}
\]
The map $B'\to D'$ is a $Q$-equivalence by the `2-out-of-3' property for $Q$-equivalences \ref{2-out-of-3} verified in Proposition~\ref{A3-A4-check}.

Applying $Z$, we obtain the homotopy pullback square of spectra
\[
\xymatrix{
\hom(D', \Id)\ar[d]_{\dir{~}}\ar[r] & \hom(C', \Id)\ar[d]^{Z(\tilde f)}\\
\hom(B', \Id)\ar@{->>}[r] & \hom(A', \Id),
}
\]
which is, in turn, a homotopy pushout of spectra. Hence $Z(\tilde f)$ is a weak equivalence. By Proposition~\ref{alt-classification}, $\tilde f$ is a $Q$-equivalence, and hence $f$ is a $Q$-equivalence by the `2-out-of-3' property.
\end{proof}

Now we are ready to verify the conditions \ref{natural}--\ref{A6} of the Theorem~ \ref{main}.

\begin{theorem}{\label{BF-conditions}}
The homotopy localization construction $Q$ satisfies the conditions \emph{\ref{natural}--\ref{A6}}.  Therefore, by Theorem~\emph{{\ref{main}}}, there exists the $Q$-local model structure on the category of small functors from spectra to spectra.
\end{theorem}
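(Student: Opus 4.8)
The plan is to assemble the propositions established earlier in this section into a single verification of the complete list of hypotheses of the generalized Bousfield--Friedlander theorem~\ref{main}. First I would record that $Q$ is a homotopy localization construction in the sense of Definition~\ref{non-func-loc}: the coaugmentation $i_F\colon F\to QF$ is natural enough by its construction in Definition~\ref{Q-constr} and, crucially, homotopy idempotent, which is precisely Proposition~\ref{hom-idemp-check}. This is the structural input with which the appendix machinery begins.

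Next I would go down the list of conditions \ref{natural}--\ref{A6} one at a time. Condition \ref{natural}, the compatibility of $i$ with maps expressed by the square $Q_f i_F = i_G f$, is Proposition~\ref{A2-check}. Conditions \ref{retract} and \ref{2-out-of-3}, namely closure of $Q$-equivalences under retracts and the two-out-of-three property, are Proposition~\ref{A3-A4-check}, which in turn rests on Lemma~\ref{funct}: the composite $\Gamma Q$ descends to an honest functor $\Sp^{\Sp}\to\Ho(\Sp^{\Sp})$. Condition \ref{new_funct}, the cube-completion property that substitutes for genuine functoriality on commutative squares, is Proposition~\ref{prop-comm-cube}. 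Finally, condition \ref{A6}, stability of $Q$-equivalences under base change along $Q$-fibrations --- the one genuinely stable input --- is Proposition~\ref{A6-check}, whose proof reduces the assertion via the characterization in Proposition~\ref{alt-classification} to a statement about $Z$ applied to a homotopy pullback of cofibrant functors, using that in the fibrant-projective model structure on $\Sp^{\Sp}$ homotopy pullbacks of spectra-valued functors are also homotopy pushouts. Any condition in the list not singled out above is immediate from the construction of $Q$.

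Having verified every condition, I would conclude by invoking Theorem~\ref{main} directly: the appendix then produces the left Bousfield localization of the fibrant-projective model structure on $\Sp^{\Sp}$ along $Q$, which by definition is the $Q$-local model structure. I do not expect a genuine obstacle here, since all the real work is concentrated in the preceding propositions; the only point demanding care is bookkeeping --- matching the enumeration of conditions \ref{natural}--\ref{A6} in the appendix to the propositions exactly, and confirming that the non-functorial framework of Definition~\ref{non-func-loc} is the one under which Theorem~\ref{main} is actually proved, so that no extra hypothesis on $\Sp^{\Sp}$ beyond those supplied by Theorem~\ref{fibrant-projective} and Section~\ref{models-for-spectra} is tacitly used.
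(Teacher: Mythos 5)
Your proposal is correct and follows essentially the same route as the paper: the proof is exactly an assembly of Propositions~\ref{hom-idemp-check}, \ref{A2-check}, \ref{A3-A4-check} (via Lemma~\ref{funct}), \ref{prop-comm-cube}, and \ref{A6-check}, followed by an appeal to Theorem~\ref{main}. If anything, you are slightly more explicit than the paper's own proof, which leaves the citation of Proposition~\ref{A6-check} for condition \ref{A6} implicit; just also note, as the paper does, that $Q$ preserves weak equivalences, which is the remaining clause of Definition~\ref{non-func-loc}.
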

\begin{proof}
Construction $Q$ is a homotopy localization construction, since $Q$ is homotopy idempotent by Proposition~\ref{hom-idemp-check} and preserves weak equivalences. Condition \ref{natural} was verified in Proposition~\ref{A2-check}. \ref{retract} and \ref{2-out-of-3} were shown in Proposition~\ref{A3-A4-check}. Condition \ref{new_funct} was proved  in Proposition~\ref{prop-comm-cube}, Condition~\ref{A6} in Proposition~\ref{A6-check}.

By the generalized Bousfield-Friedlander Theorem \ref{main}, there exists the $Q$-local model structure on $\Sp^{\Sp}$ denoted by $\Sp^{\Sp}_{Q}$
\end{proof}

\begin{lemma}\label{repr-Q-local}
For all cofibrant $X\in \Sp$, the representable functor $Y(X)=R^X$ is $Q$-local.
\end{lemma}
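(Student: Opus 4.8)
The plan is to show that $Y(X) = R^X = \hom(X,-)$ is $Q$-local by verifying that the coaugmentation $i_{Y(X)}\colon Y(X)\to QY(X)$ is a weak equivalence. By the construction in Definition~\ref{Q-constr}, it suffices to understand the map $f$ from a cofibrant replacement $\widetilde{Y(X)}$ to $Y\widehat{Z(\widetilde{Y(X)})}$, since $QY(X)$ is a pushout of a mapping cylinder on $f$. So first I would produce a convenient cofibrant replacement of $Y(X)$: since $X$ is cofibrant in $\Sp$, the object $X$ is cofibrant in $\Sp^{\op}$'s sense only in the fibrant direction, so I need to be careful --- but for a fibrant $X$ in $\Sp^{\op}$, i.e. a cofibrant spectrum, Corollary~\ref{Quillen-adjunction} tells us $Y(X)$ is fibrant-projectively fibrant. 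For cofibrancy, note that $Y(X) = R^X \otimes S$, and by Lemma~\ref{cofibrant representables} the map $\emptyset \to R^X$ is in $I_{\mc F}$-cof whenever $X$ is fibrant; so when $X$ is fibrant and cofibrant we may take $\widetilde{Y(X)} = Y(X)$ itself. For general cofibrant $X$, replace $X$ by a fibrant-and-cofibrant model $\hat X$ in $\Sp$; since $X\to\hat X$ is a weak equivalence of cofibrant spectra and $R^{(-)}$ applied to it lands among homotopy functors evaluated on cofibrant objects, $Y(\hat X)\to Y(X)$ — wait, this goes the wrong way; rather I use that the two represented functors have the same fibrant-projective homotopy type via the $\cat V$-model structure.

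The key computational input is Yoneda's lemma in the enriched form $Z Y(X) \cong X$, stated and used already in the proof of Proposition~\ref{hom-idemp-check}. Thus $Z(\widetilde{Y(X)}) \simeq Z(Y(X)) \cong X$, which is cofibrant, so its fibrant replacement $\widehat{Z(\widetilde{Y(X)})}$ in $\Sp^{\op}$ (= cofibrant replacement in $\Sp$) is weakly equivalent to $X$ itself. Applying the right Quillen functor $Y$, which preserves weak equivalences between fibrant objects, the natural map $Y(X) \to Y\widehat{Z(\widetilde{Y(X)})}$ induced by the unit is identified up to weak equivalence with the identity on $Y(X)$: concretely, the composite $Y(X) \xrightarrow{\eta} YZY(X) \cong Y(X)$ is the identity by the triangle identity for the adjunction $(Z,Y)$. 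Hence $f$ is, up to the functorial replacements, a weak equivalence, so the mapping cylinder $\MC(f)\to Y\widehat{Z(\widetilde{Y(X)})}$ combined with $\widetilde{Y(X)}\cofib\MC(f)$ exhibits $i_{Y(X)}$ as a weak equivalence. Then $Y(X)$ is $Q$-local because it is a fibrant-projectively fibrant functor for which the coaugmentation into its $Q$-localization is a weak equivalence.

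The main obstacle is bookkeeping the replacements carefully: $Y(X)$ need not literally be cofibrant in $\Sp^{\Sp}$ unless $X$ is fibrant and cofibrant, the $Q$-construction's first step is a non-functorial cofibrant replacement, and Yoneda's lemma $ZY\cong\Id$ holds on the nose but we must track that $Z$ applied to a \emph{cofibrant replacement} of $Y(X)$ is weakly equivalent to $X$ (this uses that $Z$ is left Quillen, hence preserves weak equivalences between cofibrant objects, Section~\ref{Q-loc}). Once these identifications are in place, the triangle identity for $(Z,Y)$ does all the real work. I would also remark that the statement is exactly what one needs: since representable functors $Y(X)$ for cofibrant $X$ are fibrant and cofibrant in the fibrant-projective structure and now shown to be $Q$-local, they will be fibrant-cofibrant in $\Sp^{\Sp}_Q$, which is the promised feature that every spectrum in $\Sp^{\op}$ corresponds to a small representable functor fibrant and cofibrant in the new model structure.
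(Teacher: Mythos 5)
Your proposal is correct and follows essentially the same route as the paper's proof: take $R^{\hat X}\to R^{X}$, induced by a fibrant replacement $X\trivcofib \hat X$ in $\Sp$, as the cofibrant replacement of $Y(X)$, note that the unit of the adjunction is the identity by Yoneda's lemma, observe that applying $Y$ to the cofibrant replacement of $\hat X$ in $\Sp$ (fibrant replacement in $\Sp^{\op}$) gives a weak equivalence, and conclude that $Y(X)\to QY(X)$ is a trivial cofibration via the mapping cylinder factorization and cobase change. Your worry about direction is unfounded --- $Y(\hat X)\to Y(X)$ is exactly the needed cofibrant approximation --- and with this choice $Z(\widetilde{Y(X)})=\hat X$ on the nose, so the slightly delicate step $Z(\widetilde{Y(X)})\simeq Z(Y(X))$ (not automatic, since $Y(X)$ need not be cofibrant) is bypassed, just as in the paper.
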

\begin{proof}
For every cofibrant $X\in \Sp$ the represented functor $Y(X)=R^X$ is fibrant in the fibrant-projective model structure. It remains to show that $Q(Y(X))\simeq Y(X)$.

The $Q$-construction begins with the cofibrant replacement of $Y(X)$. Consider the fibrant replacement $X\trivcofib \hat X$ of $X$ in $\Sp$. Then $R^{\hat X}=Y(\hat X)\trivfibr Y(X)=R^{X}$ is a cofibrant replacement in the fibrant-projective model structure by Lemma~\ref{right-lifting-properties}. The unit of the adjunction (\ref{adj-Sp}) is the identity in our case, and $Y(\widetilde{\hat X} \trivfibr \hat X)$ is a weak equivalence $f\colon R^{\hat X}\tilde\to R^{\widetilde{\hat X}}$ since $X$ and hence $\widetilde{\hat X}$ and $\hat X$ are cofibrant.

The factorization of $f$ into a cofibration followed by a weak equivalence produces a cofibration, which is also a weak equivalence by the `2-out-of-3' property. Hence, its cobase change $Y(X)\to Q(Y(X))$ is a weak equivalence again; in fact, a trivial cofibration.
\end{proof}

The main result of this section is the following
\begin{theorem}\label{Yoneda Quillen equivalence}
The adjunction \emph{(\ref{adj-Sp})} becomes a Quillen equivalence after we localize the left-hand side with respect to $Q$.
\end{theorem}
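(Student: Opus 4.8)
\emph{Plan.} The plan is to deduce this from three facts already in hand: the characterisation of $Q$-equivalences in Proposition~\ref{alt-classification}, the $Q$-locality of representables in Lemma~\ref{repr-Q-local}, and the full faithfulness of the Yoneda embedding $Y$, which makes the counit $ZY\to\Id_{\Sp^{\op}}$ of the adjunction~(\ref{adj-Sp}) a natural isomorphism (this is the isomorphism $ZY\cong\Id$ used in the proof of Proposition~\ref{hom-idemp-check}).

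First I would check that $(Z,Y)$ is still a Quillen pair once the source carries the $Q$-local model structure $\Sp^{\Sp}_{Q}$. Its cofibrations are the fibrant-projective cofibrations, so $Z$ still preserves cofibrations; moreover $Z$ carries $Q$-equivalences between cofibrant objects to weak equivalences of spectra by Proposition~\ref{alt-classification}, so $\mathbf{L}Z$ descends to $\Ho(\Sp^{\Sp}_{Q})$. Equivalently, $Y$ sends the fibrant objects of $\Sp^{\op}$ --- precisely the cofibrant spectra --- to $Q$-local, hence $Q$-fibrant, objects by Lemma~\ref{repr-Q-local}. By the standard criterion governing when a Quillen pair descends to a left Bousfield localisation (see \cite{Hirschhorn}; it applies because $\Sp^{\Sp}_{Q}$ is a genuine left Bousfield localisation, cf.\ Appendix~\ref{gen-BF}), it follows that $Z\colon\Sp^{\Sp}_{Q}\rightleftarrows\Sp^{\op}\colon Y$ is a Quillen pair.

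Then I would show that the derived adjunction $\mathbf{L}Z\colon\Ho(\Sp^{\Sp}_{Q})\rightleftarrows\Ho(\Sp^{\op})\colon\mathbf{R}Y$ is an equivalence of categories, which is what a Quillen equivalence amounts to, by proving $\mathbf{L}Z$ fully faithful and essentially surjective. For full faithfulness, observe that for a cofibrant functor $F$ the derived unit $F\to\mathbf{R}Y\mathbf{L}Z(F)$ is, by the very construction of $Q$ (Definition~\ref{Q-constr}, in which one may take $\tilde F=F$), the map $f\colon F\to Y\widehat{ZF}$ whose mapping cylinder computes $QF$; since $\MC(f)\we Y\widehat{ZF}$ and $\MC(f)\we QF$, this derived unit is identified in $\Ho(\Sp^{\Sp}_{Q})$ with the coaugmentation $i_{F}\colon F\to QF$, which is a $Q$-equivalence and hence an isomorphism there. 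So the unit of the derived adjunction is an isomorphism, i.e.\ $\mathbf{L}Z$ is fully faithful. For essential surjectivity, every object of $\Sp^{\op}$ is isomorphic in $\Ho(\Sp^{\op})$ to one that is fibrant in $\Sp$, namely its fibrant replacement there; and for such an $X$ the representable $R^{X}=Y(X)$ is cofibrant by Lemma~\ref{cofibrant representables}, so $\mathbf{L}Z\,R^{X}\cong Z(R^{X})=ZY(X)\cong X$. Therefore $\mathbf{L}Z$ is an equivalence, and $(Z,Y)$ is a Quillen equivalence.

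The real content of the theorem has already been spent in Section~\ref{Q-loc} and in Proposition~\ref{alt-classification} and Lemma~\ref{repr-Q-local}; the rest is formal. The two points needing care are: (i) the descent of the Quillen pair to $\Sp^{\Sp}_{Q}$, where one must know that the non-functoriality of $Q$ does not obstruct the usual localisation criterion --- which is exactly what the generalised Bousfield--Friedlander machinery of Appendix~\ref{gen-BF} provides; and (ii) the asymmetry that $R^{X}$ is known to be cofibrant only when $X$ is fibrant in $\Sp$, which is why essential surjectivity of $\mathbf{L}Z$ (rather than a statement ranging over all cofibrant spectra) is the convenient target. An equivalent endgame is Hovey's criterion for a Quillen equivalence: $Z$ reflects weak equivalences between cofibrant objects --- immediate from Proposition~\ref{alt-classification} upon taking the cofibrant replacement of the given map to be itself --- together with the fact that the derived counit $Z((YX)^{c})\to X$ is a weak equivalence for every fibrant $X\in\Sp^{\op}$; the latter again reduces to $X$ also fibrant in $\Sp$, where $(YX)^{c}=R^{X}$ may be chosen and the derived counit becomes the isomorphism $ZY(X)\cong X$.
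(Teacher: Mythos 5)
Your proposal is correct, but the second half runs along a different axis than the paper's proof. For the Quillen-pair step you and the paper do essentially the same thing: the paper invokes Dugger's lemma directly (trivial fibrations are unchanged by the localization, and $Y$ of a fibration of fibrant objects in $\Sp^{\op}$ is a fibration between $Q$-local objects, hence a $Q$-fibration by the characterization of $Q$-fibrations), whereas you cite the Hirschhorn-style descent criterion for left Bousfield localizations; note that the latter is stated there for localizations at a (class of) maps, so in this non-functorial Bousfield--Friedlander setting you should really justify it exactly as the paper does, via Dugger's lemma together with Lemma~\ref{A.8}(3)/Theorem~\ref{main} --- the ingredients are all available, so this is presentational rather than a gap. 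For the equivalence itself, the paper verifies Quillen's criterion directly: for cofibrant $A$ and fibrant $X\in\Sp^{\op}$ it shows in both directions that $Z(A)\to X$ is a weak equivalence iff the adjoint $A\to YX$ is a $Q$-equivalence, using a lift to $\widehat{Z(A)}$ and Proposition~\ref{hom-idemp-check} one way, and a cofibrant replacement $\tilde X$, Yoneda ($ZYp=p$, $\varepsilon_X=\Id$) and left Quillen-ness of $Z$ the other way. You instead work at the level of derived functors: the derived unit at a cofibrant $F$ is identified (taking $\tilde F=F$ in Definition~\ref{Q-constr}) with the coaugmentation $i_F\colon F\to QF$ up to a genuine weak equivalence $\MC(f)\we Y\widehat{ZF}$, hence is a $Q$-equivalence, giving full faithfulness of $\mathbf{L}Z$; essential surjectivity then comes from $\mathbf{L}Z(R^X)\cong ZY(X)\cong X$ for $X$ fibrant in $\Sp$ (so that $R^X$ is cofibrant by Lemma~\ref{cofibrant representables}), every object of $\Ho(\Sp^{\op})$ being isomorphic to such an $X$. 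Your route makes the conceptual content transparent --- the $Q$-construction literally is the derived unit, so idempotence of $Q$ does all the work --- at the price of invoking the formal equivalence between ``Quillen equivalence'' and ``derived adjunction is an adjoint equivalence'' (and, in your alternative endgame via Hovey's criterion, the homotopy-invariance remark needed to reduce the derived counit check to spectra that are fibrant in both structures); the paper's pointwise adjoint-map argument avoids that formal layer and handles the cofibrancy/fibrancy asymmetry of representables explicitly in the converse direction.
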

\begin{proof}
First, we need to show that the adjunction (\ref{adj-Sp}) is still a Quillen adjunction after we localize the left hand side with respect to $Q$. It suffices to check, by Dugger's lemma \cite[Prop. 8.5.4]{Hirschhorn}, that the right adjoint $Y$ preserves fibrations of fibrant objects and all trivial fibrations. By Lemma~\ref{repr-Q-local}, $Y(X)$ is $Q$-local for all cofibrant $X\in \Sp$ or, equivalently, fibrant $X\in \Sp^{\op}$. Hence $Y$ applied on a fibration of fibrant objects produces a fibration of $Q$-local objects, i.e., a $Q$-fibration by Lemma~\ref{A.8}(\ref{s3}). Trivial fibrations do not change under the $Q$-localization by Lemma~\ref{A.8}(\ref{s2}), and hence are preserved by $Y$ as in Proposition~\ref{Quillen-adjunction}.

Given a cofibrant $A\in (\Sp^{\Sp})^{Q}$ and a fibrant $X\in \Sp^{\op}$, we need to show that a map $f\colon Z(A)\to X$ is a weak equivalence if and only if the adjoint map $g\colon A\to YX$ is a $Q$-equivalence.

Suppose that $f$ is a weak equivalence and consider the fibrant replacement $j\colon Z(A)\tilde\cofib \widehat{Z(A)}$. Then there exists a lift $\hat f\colon \widehat{Z(A)}\to X$ satisfying $f=\hat f j$, since $X$ is fibrant. By `2-out-of-3,' $\hat f$ is a weak equivalence. The adjoint map $g$ may be factored as the unit $\eta_{A}\colon A\to YZA$ composed with $Yf=Y\hat f\circ Yj$. But $Y\hat f$ is a weak equivalence, since $\hat f$ is a weak equivalence of fibrant objects and $Y$ is a right Quillen functor. The composition $Yj\circ \eta_{A}$ is a $Q$-equivalence by definition of $Q$ and Proposition~\ref{hom-idemp-check}. Therefore $g$ is a $Q$-equivalence.

Conversely, suppose $g$ is a $Q$-equivalence. Let $p\colon X \trivfibr \hat X$ be a fibrant replacement of $X$ in $\Sp$, in other words a cofibrant replacement in $\Sp^{\op}$. 
Then $Yp\colon Y\hat X \trivfibr YX$ is a cofibrant replacement in $(\Sp^{\Sp})^{Q}$ in the fibrant-projective model structure by Lemma~\ref{cofibrant representables}. Then, there exists a lift $\hat g\colon A\to Y\hat X$ satisfying $g=Yp\circ \hat g$. Moreover, $\hat g$ is a weak equivalence by the `2-out-of-3' axiom. The adjoint map may be factored as $Zg=ZYp\circ Z\hat g$ composed with the counit of the adjunction $\varepsilon_{X}\colon ZYX\to X$. By Yoneda's lemma $ZYp=p$ and $\varepsilon_{X}=\Id_{X}$.  Hence, $f=p\circ Z\hat g$, but $\hat g$ is a weak equivalence between cofibrant objects, and therefore $Z\hat g$ is a weak equivalence, since $Z$ is a left Quillen functor.
\end{proof}

This model is much more complicated than the dual of any other model of spectra that we know, but it has a nice advantage. 

\begin{corollary}\label{cor:aleph-0-small}
Every object in $\Sp^\Sp$ is $Q$-local weakly equivalent to a representable functor whose representing spectrum is cofibrant. In particular, all objects are $Q$-local weakly equivalent to an $\aleph_{0}$-small object.
\end{corollary}

\begin{proof}
For a functor $F\co\Sp\to\Sp$ set $\widehat{Z(\tilde F)}=X\in\Sp$. Then we have $Q$-local equivalences
  $$ F\simeq Q(F)\simeq Y\widehat{Z(\tilde F)}=R^X. $$
By the Yoneda lemma mapping out of representable functors commutes with all colimits in $\Sp^\Sp$ because they are computed objectwise.
\end{proof}

The closely related Theorem~\ref{dual-Brown} is one of our main results. Here is another illustration of the advantage.

\begin{example}
Consider a not necessarily compact spectrum $A$ and its associated homology functor $A\wedge -\co\Sp\to\Sp$. What is the best approximation of this functor by a representable functor? If $A$ happens to a be the Spanier-Whitehead dual of some spectrum $B$, i.e. $A\simeq \hom(B,\hat S)=D B$, then there is a natural map $A\wedge - \to \hom(B,-)=R^B$, which is adjoint to the evaluation map $A\wedge B \to \hat S$ smashed with the identity functor.

Nevertheless, this is not the best approximation of our functor. Computing the fibrant replacement in the localized model structure, we obtain a map $A\wedge - \to \hom(D A,-)$, which turns out to be a better approximation, since there is a map $\hom(D A,-)\to\hom(B,-)$ induced by the natural morphism $B\to DD B$. The usual lifting property in the model category allows one to construct a factorization of any natural transformation $A\wedge - \to \hom(C,-)$, where $C$ is cofibrant, through a functor weakly equivalent to $\hom(D A,-)$.
\end{example}

Unfortunately, the $Q$-local model structure on $\Sp^\Sp$ is not as nice as we could hope for. It is not \emph{class-cofibrantly generated} \cite{pro-spaces}. Here is the reason: class-cofibrantly generated model categories have the property that fibrant objects are closed under $\mu$-filtered colimits for any $\mu$ bigger than the presentability rank of the domains and the codomains of the generating trivial cofibrations. In the $Q$-local model category, the fibrant objects are weakly equivalent to the functors represented by cofibrant objects. However, representable functors are not closed under filtered colimits of any cardinality. See \cite{Chorny-Brownrep} for more details and examples of non-class-cofibrantly generated model categories.
This drawback makes it very difficult to perform localizations or cellularizations in our new model category.

\section{Enriched representability in the dual category of spectra}\label{sec:rep}

In this section, we use our model of the opposite category of spectra to prove an enriched version of the Brown representability theorem for that category. This theorem classifies representable functors up to homotopy in terms of their commutation with certain homotopy limits.

In Section~\ref{Q-loc} we have established a Quillen equivalence between the $Q$-local model structure on $\Sp^\Sp$ and the category $\Sp^\op$. The latter is equivalent to the full subcategory of representable functors. 
We are going to prove that fibrant functors in the $Q$-local model structure are precisely those functors that commute up to weak equivalences with the required homotopy colimits. This establishes our representability theorem.

In more detail, starting from the fibrant-projective model structure on $\Sp^{\Sp}$ we will prove that the $Q$-localization constructed in Section~\ref{Q-loc} is precisely the localization that ensures that the local objects are those functors that take homotopy pullbacks to homotopy pullbacks and commute with products up to homotopy. In other words, we have to show that $Q$-localization is the localization with respect to the class $\cal E=\cal F_1\cup\cal F_2$ of maps where
\[
\cal F_{1}=\left\{\left.
\hocolim\left(
\vcenter{\xymatrix@=10pt{
R^{D}\ar[r]\ar[d]&R^{B}\\
R^{C}
}}
\right)\longrightarrow R^{A}
\right|
\vcenter{
\xymatrix@=10pt{
A\ar[r]\ar[d]&B\ar[d]\\
C\ar[r]&D
}
}\text{ -- \begin{minipage}{1.4in}homotopy pullback\\ of fibrant objects in $\Sp$ \end{minipage}}
\right\}
\]
and
\[
\cal F_{2}=\left\{\left.
\coprod R^{X_{i}}\longrightarrow R^{\prod X_{i}}
\right|
\text{for fibrant } X_{i}\in \Sp,\,i\in \cal I
\right\}
\]

\begin{proposition}\label{maps-in-E-are-Q-equiv}
All maps in $\cal E$ are $Q$-local weak equivalences. 
\end{proposition}

\begin{proof}
We need to show that the application of $Q$ on every map in $\cal E$ results in a weak equivalence. This is readily verified by going through the stages of Definition~\ref{Q-constr} and applying Yoneda's lemma and the fact that mapping out of a homotopy colimit results in a homotopy limit. 
\end{proof}

We are now going to show the converse: it suffices to invert all the maps in $\cal E=\cal F_{1}\cup \cal F_{2}$ in order to obtain the $Q$-local model structure. Since we know, by Corollary~\ref{cor:aleph-0-small}, that the $Q$-local objects are precisely the fibrant functors fibrant-projectively equivalent to the functors represented in cofibrant spectra, it suffices to show that every functor is $\cal E$-equivalent to a representable functor so that we can conclude that the $Q$-fibrant objects coincide with the $\cal E$-local objects, and  hence $Q$-equivalences coincide with $\cal E$-equivalences.

\begin{lemma}\label{compact-dual}
Let $A$ and $X$ be cofibrant spectra and suppose that $A$ is compact. Then there is fibrant-projective equivalence  
  $$A\wedge R^{X}\simeq R^{DA\wedge X}$$
natural in $A$ and $X$, where $DA$ is a cofibrant representative of the Spanier-Whitehead dual of $A$.
\end{lemma}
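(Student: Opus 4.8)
The plan is to establish the natural equivalence $A\wedge R^X\simeq R^{DA\wedge X}$ by a Yoneda-style argument, reducing the claim to a statement about mapping spectra that follows from compactness of $A$. First I would recall that, by definition, $R^{DA\wedge X}(W)=\hom_{\Sp}(DA\wedge X,W)$ for a spectrum $W$, and that $DA=\hom_{\Sp}(A,\hat S)$. Using the closed monoidal structure, $\hom_{\Sp}(DA\wedge X,W)\cong \hom_{\Sp}(DA,\hom_{\Sp}(X,W))=\hom_{\Sp}(DA,R^X(W))$. On the other side, $(A\wedge R^X)(W)=A\wedge\hom_{\Sp}(X,W)=A\wedge R^X(W)$. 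So the content of the lemma is that the natural assembly map
\[
A\wedge \hom_{\Sp}(X,W)\longrightarrow \hom_{\Sp}(DA,\hom_{\Sp}(X,W))
\]
is a weak equivalence whenever $W$ (and hence $\hom_{\Sp}(X,W)$ for fibrant $W$ and cofibrant $X$) is fibrant; here the map is the one adjoint to the composite of smashing the Spanier-Whitehead evaluation $A\wedge DA\to \hat S$ with $\hom_{\Sp}(X,W)$, or equivalently the canonical map $A\wedge \hom_{\Sp}(A,-)\to (-)$ specialized appropriately. Thus everything reduces to showing that the natural map $A\wedge \hom_{\Sp}(A,\hat S)\wedge E\to E$, or rather $A\wedge \hom_{\Sp}(A,\hat S)\to \hom_{\Sp}(E, A\wedge\hom_{\Sp}(A,\hat S)\wedge E)$-type assembly, exhibits $A$ as a dualizable (strongly dualizable) object.

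The key step is therefore the classical fact that a \emph{compact} (finite) spectrum $A$ is strongly dualizable in the stable homotopy category: the natural transformation $DA\wedge(-)\to \hom_{\Sp}(A,-)$ is an equivalence on all objects, equivalently $A\wedge DA\to \hat S$ together with $S\to DA\wedge A$ are the unit and counit of a duality. I would invoke this — it is standard, and compactness of $A$ is exactly the hypothesis that makes it available — to conclude that for any fibrant spectrum $E$ the map $DA\wedge E\to \hom_{\Sp}(A,E)$ is a weak equivalence. Applying this with $E=\hom_{\Sp}(X,W)$ (which is fibrant since $X$ is cofibrant and $W$ fibrant) and using $DA\wedge\hom_{\Sp}(X,W)\cong \hom_{\Sp}(DA\wedge X,W)$ via the adjunction, together with the already-recorded fact (end of Section~\ref{models-for-spectra}) that smashing with a cofibrant spectrum such as $A$ or $DA\wedge X$ preserves stable equivalences, gives the pointwise equivalence $(A\wedge R^X)(W)\we R^{DA\wedge X}(W)$ for all fibrant $W$. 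Naturality in $W$ is automatic since every map in sight is a natural transformation of functors on $\Sp$; hence this is a fibrant-projective weak equivalence of small functors. Finally I would note that $A\wedge R^X$ and $R^{DA\wedge X}$ are genuinely small functors (each representable functor is small, and $A\wedge(-)$ of a small functor is small), so the statement lives in $\Sp^{\Sp}$.

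The main obstacle I expect is bookkeeping around cofibrancy and fibrancy: one must choose $DA$ to be a \emph{cofibrant} representative of the Spanier-Whitehead dual (as the statement explicitly allows), so that $DA\wedge X$ is cofibrant and $R^{DA\wedge X}$ behaves well, and one must make sure the evaluation/assembly maps used are the homotopically correct ones — i.e., that $\hom_{\Sp}(A,\hat S)$ computes the derived mapping spectrum, which holds because $A$ is cofibrant and $\hat S$ is fibrant. A secondary subtlety is to phrase the duality equivalence $DA\wedge E\we \hom_{\Sp}(A,E)$ so that it is manifestly natural in $E$ and compatible with the monoidal isomorphism $DA\wedge\hom_{\Sp}(X,-)\cong\hom_{\Sp}(DA\wedge X,-)$; this is where writing the map as the one adjoint to $A\wedge DA\wedge X\to X$ (evaluation smashed with $X$) and then using the triangle identities for the duality pairing keeps things honest. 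Beyond that, the argument is a formal consequence of strong dualizability of compact spectra plus the monoidal-model bookkeeping already set up in the paper.
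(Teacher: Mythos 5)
Your proposal takes essentially the same route as the paper's proof: both hinge on the natural map adjoint to the evaluation $A\wedge DA\to\hat S$, on the classical Dold--Puppe fact that a compact spectrum is strongly dualizable, and on the same bookkeeping (smashing with a cofibrant spectrum preserves stable equivalences, while $\hom_{\Sp}(DA,-)$ preserves weak equivalences of fibrant spectra). The paper merely packages this by first treating the special case $X=\mathbb S$, showing $A\wedge\Id\to\hom_{\Sp}(DA,\hat{\Id})$ is an objectwise equivalence, and then composing with $R^{X}$ via $\hom_{\Sp}(DA,R^{X}(-))\cong R^{DA\wedge X}$; your pointwise evaluation at fibrant $W$ is the same computation. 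One slip in your final assembly needs repair, though: the identification $DA\wedge\hom_{\Sp}(X,W)\cong\hom_{\Sp}(DA\wedge X,W)$ is \emph{not} an adjunction isomorphism (the adjunction gives $\hom_{\Sp}(DA\wedge X,W)\cong\hom_{\Sp}(DA,\hom_{\Sp}(X,W))$), and the duality statement you invoke, $DA\wedge E\to\hom_{\Sp}(A,E)$, is the transposed one; combined literally as written these would produce $DA\wedge R^{X}\simeq R^{A\wedge X}$ rather than the lemma as stated. The correct key step is exactly the map you set up in your first paragraph: the assembly map $A\wedge E\to\hom_{\Sp}(DA,E)$ adjoint to evaluation smashed with $E$ is a weak equivalence on fibrant $E$ because $A$ is strongly dualizable (equivalently, apply your form of the statement to $DA$, which is also compact, and use $A\simeq DDA$); taking $E=\hom_{\Sp}(X,W)$ and the genuine adjunction isomorphism then gives $(A\wedge R^{X})(W)\we R^{DA\wedge X}(W)$ for all fibrant $W$, which is the paper's argument.
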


\begin{proof}
Let us first establish a special case of this equivalence. Suppose $X=\mathbb{S}$, the sphere spectrum, so that $R^{\mathbb S}=\Id_{\Sp}$. There is then a natural map, $(A\wedge -)\to\hom(DA,-)$, corresponding by adjunction to the evaluation map $A\wedge DA\to \mathbb{S}$ smashed with the identity map of identity functors.

The functor $A\wedge -$ is a homotopy functor according to Remarks~\ref{rem:further-property}. On the other hand, $\hom(DA,-)$ preserves weak equivalences of fibrant spectra, so that if we compose it with the fibrant replacement functor, it becomes a homotopy functor. If we show that the composition $A\wedge \Id\to \hom(DA,\Id)\to \hom(DA,\hat{\Id})$ is an objectwise equivalence of functors, we will conclude that the initial map is a fibrant-projective equivalence of functors.

In order to show that the composed map of functors is a levelwise weak equivalence, consider the derived natural transformation of derived functors on the homotopy category of spectra $A\wedge \Id\to [DA,\Id]$, where the total derived functors exist since the original functors preserve weak equivalences. For functors defined on the homotopy category of spectra, this map is an isomorphism of functors if and only if $A$ is strongly dualizable \cite{Dold-Puppe}.
Further on, $A$ is a strongly dualizable spectrum iff $A$ is compact \cite[3.1]{Dold-Puppe}.

It remains only to apply these two equivalent functors on the representable functor $R^{X}$ in order to obtain the required equivalence: $A\wedge R^{X}\we \hom(DA,R^{X})=R^{DA\wedge X}$.
\end{proof}

\begin{proposition}\label{E-local}
Every small functor $W\in \Sp^{\Sp}$ is $\cal E$-equivalent to a functor represented in a fibrant and cofibrant spectrum.
\end{proposition}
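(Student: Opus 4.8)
The plan is to build, for a given small functor $W$, an $\cal E$-equivalent representable functor by a transfinite ``cell attachment'' procedure that forces $W$ to become a functor commuting (up to homotopy) with homotopy pullbacks and products, and then to identify the resulting local object. First I would reduce to the case of a homotopy functor: by Lemma~\ref{approx}, $W$ is $\cal H$-equivalent to a $C$-cellular functor, and $\cal H$-equivalences are in particular $\cal E$-equivalences (each generating map of $\cal H$ is the map $R^B\to R^A$ associated to a weak equivalence of fibrant objects, which is a degenerate instance of an $\cal F_1$-map — a homotopy pullback square with two legs identities — hence $\cal E$-local objects see it as an equivalence). So we may assume $W$ is $C$-cellular, in particular cofibrant and a homotopy functor.

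Next I would run a small-object-type argument attaching cells along the maps in $\cal E$ (more precisely, along a set $\cal E_\mu$ of $\mu$-accessible representatives, with $\mu$ chosen as in the proof of Lemma~\ref{postponed} to exceed the accessibility rank of $W$ and the accessibility degree of the weak equivalences in $\Sp$), using left properness of the fibrant-projective model structure so that the horns $\Hor(\cal E'_\mu)$ together with $J_\mu$ produce a fibrant-projectively-fibrant $\cal E$-local functor $W \we W_{\cal E}$ by an $\cal E$-local equivalence, exactly as in Proposition~\ref{hom-loc}. The point of making $W$ a $C$-cellular homotopy functor first is that $\cal E$-local objects are automatically homotopy functors (the maps in $\cal F_1$ with one leg trivial detect this), so the localization stays inside homotopy functors and one can control it cellularly.

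The heart of the argument is then to show that an $\cal E$-local functor is weakly equivalent to a representable one, and hence (by the Quillen equivalence of Theorem~\ref{Yoneda Quillen equivalence} and Lemma~\ref{repr-Q-local}) that $\cal E$-local $=$ $Q$-local, so that the $\cal E$-equivalence $W\we W_{\cal E}$ together with a fibrant replacement in $\Sp$ exhibits $W$ as $\cal E$-equivalent to $R^{Y}$ with $Y$ fibrant and cofibrant. For this I would invoke Lemma~\ref{compact-dual}: a functor that takes homotopy pullbacks to homotopy pullbacks and products to homotopy products is built, via the $\cal F_1$- and $\cal F_2$-cells, as a homotopy colimit of pieces of the form $A\wedge R^X$ with $A$ running through compact spectra (the domains/codomains of the generating cofibrations of $\Sp$ are compact by the Remark above), and each such piece is fibrant-projectively equivalent to $R^{DA\wedge X}$; assembling these and using that $\Sp^{\op}$ is closed under the relevant homotopy limits, the colimit is itself weakly equivalent to a single representable $R^{Y}$. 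Equivalently and more cleanly: apply $Z$ to $W_{\cal E}$; because $W_{\cal E}$ is $\cal E$-local it sends the $\cal F_1$- and $\cal F_2$-cells to equivalences, and $Z=\hom(-,\Id)$ converts homotopy colimits of representables into the corresponding homotopy limits of spectra, so $Z(W_{\cal E})$ is a well-defined spectrum $Y$ and the derived unit $W_{\cal E}\to YZ(W_{\cal E})=R^{Z(W_{\cal E})}$ is a weak equivalence by Proposition~\ref{alt-classification} and Proposition~\ref{hom-idemp-check}; replacing $Z(W_{\cal E})$ by a cofibrant model gives the fibrant-cofibrant spectrum we want.

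The main obstacle I anticipate is the compactness/strong-dualizability restriction in Lemma~\ref{compact-dual}: the cells in $\cal F_1,\cal F_2$ involve \emph{arbitrary} spectra $A,B,C,D$ and $X_i$, not just compact ones, so one cannot directly replace $A\wedge R^X$ by $R^{DA\wedge X}$ for the pieces coming from $\cal F_2$ with non-compact $X_i$, nor for general homotopy-pullback corners. The way around this is to observe that it is enough to attach cells along those instances whose constituent spectra are compact and cofibrant (the domains and codomains of $I_\Sp$ are compact, and every spectrum is a filtered homotopy colimit of compact ones), show that the resulting localization already makes the functor commute with \emph{all} homotopy pullbacks and products up to homotopy, and only then apply Lemma~\ref{compact-dual}; verifying that the ``compact'' cells suffice to produce the full exactness — i.e. that an $\cal E_{\mathrm{cpt}}$-local functor is automatically $\cal E$-local — is the delicate point, and is where the structure of the stable category (every object a filtered homotopy colimit of compacts, and homotopy pullbacks $=$ homotopy pushouts, as used in Proposition~\ref{A6-check}) does the real work.
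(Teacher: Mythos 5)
There is a genuine gap, and it sits exactly where you place ``the heart of the argument.'' Your plan is: (i) localize $W$ at $\cal E$ by a small-object argument to get an $\cal E$-local $W_{\cal E}$, and (ii) show that an $\cal E$-local functor is weakly equivalent to a representable one, either by assembling $\cal F_1$/$\cal F_2$-cells via Lemma~\ref{compact-dual} or by claiming the derived unit $W_{\cal E}\to Y\widehat{Z(\widetilde{W_{\cal E}})}$ is an equivalence. Step (ii) is precisely the content of Theorem~\ref{dual-Brown}, which in the paper is \emph{deduced from} Proposition~\ref{E-local} (and the identification of $\cal E$-equivalences with $Q$-equivalences is the Corollary after it), so invoking it here is circular; and the propositions you cite do not supply it: Proposition~\ref{hom-idemp-check} only says $QF\to QQF$ is an equivalence, and Proposition~\ref{alt-classification} only characterizes $Q$-equivalences --- neither shows that $F\to QF$ is an equivalence when $F$ is $\cal E$-local. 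The paper avoids this by never localizing at $\cal E$ at all: it takes the $I$-cellular decomposition of $W'$ provided by Lemma~\ref{approx} (after the same $\cal H\subset\cal E$ reduction you make), rearranges it into a countable sequence, and proves by induction that each stage $W'_a$ is $\cal E$-equivalent to a representable $R^{Y_a}$: each attached cell is $R^{\hat X}\wedge A\to R^{\hat X}\wedge B$ with $A,B$ the \emph{compact} domains and codomains of generating cofibrations, so Lemma~\ref{compact-dual} applies directly (compactness is needed only for the smash factor, not for $\hat X$ or the representing spectra), the attaching pushout corresponds to the homotopy pullback $Y_{a+1}=Y_a\times_{\hom(A,\hat X)}\hom(B,\hat X)$, and the sequential colimit is handled by a telescope, the $\cal F_2$-maps, and $P=\lim Y_a$, with a final cofibrant correction $R^P\to R^{\tilde P}$. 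Thus the compactness obstacle you anticipate never arises in the actual argument; it is an artifact of trying to build the local object out of $\cal F_1$/$\cal F_2$-cells rather than out of the $I$-cells of $W'$, and your proposed repair (that $\cal E_{\mathrm{cpt}}$-locality implies $\cal E$-locality) is unsubstantiated and doubtful, in particular for the infinite products in $\cal F_2$.

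Step (i) has its own problem: the maps in $\cal F_2$ have domains and codomains of unbounded presentability rank (coproducts/products over arbitrary index sets of arbitrary spectra), so no single cardinal $\mu$ captures them and the horn-attachment argument of Proposition~\ref{hom-loc} does not carry over; localizing only at a set $\cal E_\mu$ yields $\cal E_\mu$-local objects, for which representability is not established. The paper's remark after Theorem~\ref{Yoneda Quillen equivalence} --- that the $Q$-local model structure is \emph{not} class-cofibrantly generated because its fibrant objects (representables up to equivalence) are not closed under filtered colimits --- is a warning that such a cellular $\cal E$-localization cannot be produced by a small-object argument in any straightforward way; this is exactly why the localization in Section~\ref{Q-loc} is performed by the non-functorial Bousfield--Friedlander $Q$-construction instead, and why the comparison with $\cal E$ is made only a posteriori, through the proposition you are asked to prove.
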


\begin{proof}
Recall from Section~\ref{section-homotopy} that $\cal H =\{R^{D}\to R^{C} \,|\, C\we D \text{ weak equivalence in } \Sp \}$ and note that $\cal H\subset \cal E$, since for every weak equivalence $C\we D$ in \Sp, there is a homotopy pullback square
\[
\xymatrix{
C
\ar[r]^{\dir{~}}
\ar[d]_{\dir{~}}  &  D
				\ar@{=}[d]\\
D
\ar@{=}[r]  &   D
}.
\]
Hence the map $\hocolim(R^{D}\overset = \leftarrow R^{D} \overset = \rightarrow R^{D})=R^{D}\to R^{C}$ is in $\cal F_{1}\subset \cal E$. Therefore, every $\cal H$-equivalence is also an $\cal E$-equivalence. By Lemma~\ref{approx}, the small functor $W$ is $\cal H$-equivalent, and hence $\cal E$-equivalent, to an $I$-cellular complex $W'$ that may be decomposed into a colimit indexed by a cardinal $\lambda$:
\[
W'=\colim_{a<\lambda}(W_{0}\to\cdots\to W_{a}\to W_{a+1}\to\cdots),
\]
where $W_{0}=0=R^{0}$ is the functor associating the zero spectrum to every entry, and $W_{a+1}$ is obtained from $W_{a}$ by attaching an $I$-cell:
\[
\xymatrix{
R^{\hat X}\wedge A \ar[r]\ar@{^(->}[d]& W_{a}\ar[d]\\
R^{\hat X}\wedge B \ar[r] & W_{a+1},
}
\]
where $i\colon A\hookrightarrow B$ is a generating cofibration of spectra, i.e., $A$ and $B$ are compact spectra by \ref{rem:fin-pres-dom-codom}, and $\hat X$ is a fibrant and \emph{cofibrant} spectrum.

By \cite[Lemma~3.3]{Chorny-Brownrep} we may replace the above decomposition of $W'$ with  a countable sequence $W'=\colim_{a<\omega}W'_{a}$, such that at every stage a coproduct of a set of cells is attached instead of just one cell, as in the inner square of the following commutative diagram.

\begin{equation}\label{comm-diag}
\xymatrix@=18pt{
R^{\prod_{i,\hat X}\hom(A,\hat X)}
\ar[ddd]
\ar[rrr] 						 &	&	& R^{Y_{a}}
										\ar[ddd]^{f_{a}}\\
	& \coprod_{i, \hat X} R^{\hat X}\wedge A
	\ar[r]
	\ar@{^(->}[d]
	\ar[ul]					 & W'_{a}
								\ar@{^(->}[d]
								\ar[ur]\\
	& \coprod_{i, \hat X} R^{\hat X}\wedge B
	\ar[r]
	\ar[dl]					  & W'_{a+1}
								\ar@{-->}[dr] \\
R^{\prod_{i,\hat X}\hom(B,\hat X)}
\ar[rrr]						&	&	&R^{Y_{a+1}}\\
}
\end{equation}
Assume for induction that there is an $\cal E$-equivalence $W'_a \to R^{Y_{a}}$ with $Y_a$ fibrant and cofibrant. The diagonal arrows pointing left in the diagram above are the units of adjunction (\ref{adj-Sp}). The upper horizontal arrow exists by the universal property of the unit of adjunction.

We can conclude that $W'_{a+1}$ is $\cal E$-equivalent to a representable functor $R^{Y_{a+1}}$, where $Y_{a+1}$ is computed as follows.

Since $\hat X$ is cofibrant and $A,B$ are compact, Lemma~\ref{compact-dual} implies that there are weak equivalences $R^{\hat X}\wedge A\we R^{DA\wedge\hat X}$ and $R^{\hat X}\wedge B\we R^{DB\wedge\hat X}$, and hence the left vertices of the inner commutative square in the commutative diagram (\ref{comm-diag}) are $\cal E$-equivalent to the representable functors $R^{\prod(DA\wedge \hat X)}$ and $R^{\prod(DB\wedge \hat X)}$. Notice that $\hom(A,\hat X)\simeq DA\wedge\hat X$ and $\hom(B,\hat X)\simeq DB\wedge\hat X$ by the proof of Lemma~\ref{compact-dual}, since we can substitute $DA$ and $DB$ instead of $A$ and $B$, respectively. Hence, all the solid diagonal arrows in (\ref{comm-diag}) are $\cal E$-equivalences.

Set $Y_{a+1}=Y_a\times_{\prod_{i,\hat X}\hom(A,\hat X)}\prod_{i,\hat X}\hom(B,\hat X)$. Then, this is also a homotopy pullback, since the commutative square
\[
\xymatrix{
\prod_{i,\hat X}\hom(A,\hat X) & Y_{a}\ar[l]\\
\prod_{i,\hat X}\hom(B,\hat X)\ar@{->>}[u] &Y_{a+1}\ar[l]\ar@{->>}[u]
}
\]
is a homotopy pullback of spectra. Therefore, for every $\cal E$-local small functor $U$, the mapping of the commutative diagram (\ref{comm-diag}) into $U$ induces weak equivalences on all diagonal arrows in (\ref{comm-diag}). Hence, the dashed arrow is also an $\cal E$-equivalence.

Now we need to show that $W' = \colim_{a<\omega} W'_{a}$ is $\cal E$-equivalent to a representable functor.

So far, we have constructed the following countable commutative ladder
\[
\xymatrix{
W'_{0} \ar[r] \ar[d]           & W'_{1} \ar[r] \ar[d]   & \cdots \ar[r]  & W'_{a} \ar[r] \ar[d] & \cdots \\
R^{Y_0}  \ar[r]^{f_0}   & R^{Y_1} \ar[r]^{f_1}  & \cdots \ar[r]^{f_{a-1}}  & R^{Y_a} \ar[r]^{f_a}      & \cdots,
}
\]
where vertical arrows are $\cal E$-equivalences.

Both $\omega$-indexed colimits in the above ladder are homotopy colimits in the fibrant-projective model structure, since the generating cofibrations have finitely presentable domains and codomains. Therefore, trivial fibrations are preserved under filtered colimits. Hence, the induced map 
  $$\colim_{a<\omega} W'_{a}\simeq \hocolim_{a<\omega} W'_{a}\to \hocolim_{a<\omega} R^{Y_{a}}\simeq \colim_{a<\omega} R^{Y_{a}}$$ 
is an $\cal E$-equivalence, which can be verified by mapping to an arbitrary $\cal E$-local object.

It remains to show that $\colim_{i<\omega} R^{Y_{i}}$ is $\cal E$-equivalent to a representable functor.
The $\omega$-indexed colimit may be represented as a pushout square:
\[
\xymatrix{
\left(\coprod_{a<\omega} R^{Y_a}\right)\sqcup \left(\coprod_{a<\omega} R^{Y_a}\right)\ar[rr]^-{1\sqcup (\sqcup f_{a})}\ar[d]_{\nabla}&&\coprod_{a<\omega} R^{Y_a}\\
\coprod_{a<\omega} R^{Y_a}, && 
}
\]
where $\nabla$ is the codiagonal and the horizontal map is combined of identity morphism and the sum of bonding maps. We observe that the double mapping cylinder of the diagram above is weakly equivalent to the telescope construction applied to the sequence $\{R^{Y_i}\}$, and therefore the homotopy pushout is weakly equivalent to the sequential homotopy colimit. In the following natural morphism of pushout   diagrams the vertical maps are $\cal E$-equivalences from $\cal F_{2}$
\[
\xymatrix{
\coprod_{a<\omega} R^{Y_a} \ar[d]& (\coprod_{a<\omega} R^{Y_a})\sqcup (\coprod_{a<\omega} R^{Y_a})\ar[rr]^-{1\sqcup (\sqcup f_{a})}\ar[l]_>>>>>{\nabla} \ar[d] &&\coprod_{a<\omega} R^{Y_a} \ar[d]\\
R^{\prod_{a<\omega} Y_a}  &  R^{(\prod_{a<\omega} Y_a)\times (\prod_{a<\omega} Y_a)} \ar[l]\ar[rr] && R^{\prod_{a<\omega} Y_a}.
}
\]
Hence the induced map of homotopy pushouts is also an $\cal E$-equivalence.

However, the homotopy colimit of the lower row is $\cal E$-equivalent to the representable functor $R^P$, where
\[
P={\holim\left(\left(\prod_{a<\omega} Y_a \right) \longrightarrow \left(\prod_{a<\omega} Y_a \right)\times \left(\prod_{a<\omega} Y_a \right) \longleftarrow \left(\prod_{a<\omega} Y_a \right)\right)}.
\]
By consideration dual to the homotopy telescope construction, it can be argued that $P\simeq \holim_{a<\omega}Y_{a}\simeq \lim_{a<\omega}Y_{a}$, since all the bonding maps are fibrations.

The representing object $P={\lim_{a<\lambda}Y_a}$ is a fibrant spectrum, but not necessarily cofibrant.  Consider the homotopy pulback square
\[
\xymatrix{
P\ar@{=}[r]&P\ar@{=}[d]\\
\tilde P\ar@{->>}[u]^{\dir{~}}\ar@{->>}[r]^{\dir{~}}&P,
}.
\]
Then, the map $\hocolim(\xymatrix{R^{P}&R^{P}\ar@{=}[r]\ar@{=}[l]&R^{P}})=R^{P}\to R^{\tilde P}$ is an $\cal E$-equivalence.

Finally, $W$ is $\cal E$-equivalent to a functor represented in a fibrant and cofibrant object $R^{\tilde P}$.
\end{proof}

\begin{theorem}\label{dual-Brown}
Let $F\colon \Sp\to \Sp$ be a small functor. Assume that $F$ takes homotopy pullbacks to homotopy pullbacks and also preserves arbitrary products up to homotopy. Then there exists a cofibrant spectrum $Y$ such that $F\simeq R^{Y}$ in the fibrant-projective model structure.
\end{theorem}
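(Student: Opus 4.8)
I would prove Theorem~\ref{dual-Brown} by recognizing its two exactness hypotheses as exactly the statement that $F$ is local with respect to the class $\cal E=\cal F_1\cup\cal F_2$, and then feeding this into Proposition~\ref{E-local} together with the identification — discussed around that proposition — of the $Q$-local objects with the $\cal E$-local ones. As a preliminary reduction, observe that $F$ is automatically a homotopy functor: a weak equivalence $A\we B$ fits into a homotopy pullback square whose other three corners are $B$ and whose other three maps are identities, and applying the hypothesis to this square forces $F(A)\we F(B)$. Hence one may replace $F$ by a fibrant-projectively fibrant replacement $\hat F$ in $\cat F$ (automatically a small functor); since $\hat F\simeq F$ in the fibrant-projective model structure and both exactness conditions are invariant under fibrant-projective weak equivalence, $\hat F$ is again a homotopy functor taking homotopy pullbacks to homotopy pullbacks and preserving products up to homotopy. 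It suffices to find a cofibrant $Y$ with $\hat F\simeq R^Y$.

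Next I would translate the hypotheses into $\cal E$-locality by computing derived mapping spaces out of the (co)representables occurring in $\cal E$. After replacing the spectra indexing $\cal F_1$ and $\cal F_2$ by fibrant--cofibrant ones — which is harmless, since homotopy pullback squares and the maps of $\cal E$ only matter up to weak equivalence — every source representable $R^A$ is cofibrant by Lemma~\ref{cofibrant representables}, so the enriched co-Yoneda lemma gives $\map_{\Sp^{\Sp}}(R^A,\hat F)\simeq\hat F(A)$, and mapping a homotopy colimit, resp.\ coproduct, of such representables into $\hat F$ computes the corresponding homotopy limit, resp.\ product, of the spectra $\hat F(-)$. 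Consequently $\hat F$ is $\cal F_1$-local precisely when it carries the given homotopy pullback squares to homotopy pullbacks, and $\cal F_2$-local precisely when it preserves products up to homotopy; so the hypotheses on $F$ say exactly that $\hat F$ is $\cal E$-local. Being also fibrant-projectively fibrant, $\hat F$ is then $Q$-fibrant, by the identification of the $Q$-local objects with the $\cal E$-local ones established around Proposition~\ref{E-local} — which itself rests on the observation that the $Q$-construction inverts every map of $\cal E$, a direct check using $ZY\cong\Id$, the preservation of homotopy colimits by the left Quillen functor $Z$, and the classification of $Q$-equivalences in Proposition~\ref{alt-classification}.

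Finally I would conclude. By Proposition~\ref{E-local} there is an $\cal E$-equivalence $\hat F\to R^{\tilde P}$ with $\tilde P$ fibrant and cofibrant; its target $R^{\tilde P}$ is $Q$-local by Lemma~\ref{repr-Q-local} and fibrant-projectively fibrant since $\tilde P$ is cofibrant, so it too is $Q$-fibrant. An $\cal E$-equivalence between two $Q$-fibrant objects is a $Q$-equivalence between fibrant objects of $\Sp^{\Sp}_Q$, hence a fibrant-projective weak equivalence. Therefore $F\simeq\hat F\simeq R^{\tilde P}$ in the fibrant-projective model structure, and $Y=\tilde P$ is the desired cofibrant spectrum.

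The step I expect to be the real obstacle is the translation in the second paragraph: one must be scrupulous about cofibrant and fibrant replacements so that the enriched hom-objects genuinely model the derived mapping spaces, and must check that the homotopy limit of the resulting cospan of mapping spectra agrees with $\hat F$ applied to the homotopy-pullback cospan (and similarly for products). The genuinely hard content — producing, for an arbitrary small functor, an $\cal E$-equivalence to a functor represented in a fibrant--cofibrant spectrum — is already packaged into Proposition~\ref{E-local}, which in turn leans on Lemma~\ref{compact-dual} and on the $Q$-local model structure built in Section~\ref{Q-loc}.
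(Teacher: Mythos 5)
Your proposal is correct and follows essentially the same route as the paper: fibrantly replace $F$, observe that the exactness hypotheses make $\hat F$ an $\cal E$-local functor, apply Proposition~\ref{E-local} to get an $\cal E$-equivalence $\hat F\to R^{\tilde P}$ with $\tilde P$ fibrant and cofibrant, and conclude by the local Whitehead theorem that this is a fibrant-projective equivalence. Your extra detour through $Q$-fibrancy is harmless but not needed; the paper goes directly from $\cal E$-locality of both sides to the weak equivalence.
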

\begin{proof}
Let $F$ be a functor satisfying the conditions of the theorem. Consider its fibrant replacement in the fibrant-projective model structure $F\trivcofib \hat F$. Then $\hat F$ is an $\cal E$-local functor. Proposition~\ref{E-local} and the local Whitehead theorem show that every $\cal E$-local functor is (fibrant-projective) equivalent to a functor represented in a fibrant and cofibrant object.
\end{proof}

\begin{corollary}
The $Q$-localization of the category of small functors is precisely the localization with respect to the class $\cal E$ of maps.
\end{corollary}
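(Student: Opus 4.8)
The plan is to show that the $Q$-local and $\cal E$-local model structures on $\Sp^{\Sp}$ have the same local objects; both are left Bousfield localizations of the fibrant-projective structure, so both retain its cofibrations and have as weak equivalences exactly the maps inducing equivalences on mapping spaces into the local objects, whence matching the local objects matches the model structures. Throughout write $\cal E=\cal F_{1}\cup \cal F_{2}$.

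First I would record the elementary half, already noted just before Proposition~\ref{E-local}: every map of $\cal E$ is a $Q$-equivalence. By Proposition~\ref{alt-classification} one tests this by applying $Z=\hom(-,\Id_{\Sp})$ to a cofibrant replacement of the map and checking it becomes a weak equivalence of spectra; since $Z$ is left Quillen and $ZY\cong \Id$, a map in $\cal F_{1}$ is carried to the canonical comparison map of a homotopy pullback square in $\Sp$ --- a weak equivalence precisely because the square is a homotopy pullback --- and a map in $\cal F_{2}$ is carried to an equivalence between homotopy products. Consequently every $Q$-fibrant functor $W$ sends each map of $\cal E$ to a weak equivalence of mapping spaces, i.e.\ $Q$-local implies $\cal E$-local.

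For the converse, let $U$ be an $\cal E$-local functor; in particular $U$ is fibrant in the fibrant-projective structure. Proposition~\ref{E-local} supplies a zig-zag of $\cal E$-equivalences joining $U$ to a functor $R^{\tilde P}$ represented in a fibrant and cofibrant spectrum $\tilde P$, and Lemma~\ref{repr-Q-local} says $R^{\tilde P}$ is $Q$-local, hence $\cal E$-local by the previous paragraph. Thus the zig-zag connects two $\cal E$-local objects, so by the local Whitehead theorem every map in it is a fibrant-projective weak equivalence, whence $U$ is fibrant-projectively equivalent to $R^{\tilde P}$. Since the $Q$-local objects were identified, as recalled in the setup of Section~\ref{sec:rep}, with the fibrant-projectively fibrant functors that are fibrant-projectively equivalent to functors represented in fibrant spectra, it follows that $U$ is $Q$-local. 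Combining the two inclusions, the $Q$-local and $\cal E$-local objects coincide, and the corollary follows.

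The step I expect to require the most care is not the diagram-chasing above but the bookkeeping around the $\cal E$-side: one needs left Bousfield localization at $\cal E$ to behave well enough on the merely class-combinatorial $\Sp^{\Sp}$ for the local Whitehead theorem, and for the detection of weak equivalences by local objects, to be legitimate. This is exactly the input already used in the proof of Theorem~\ref{dual-Brown}, so I would invoke it in the same spirit rather than redeveloping the class-size localization machinery here; alternatively one can phrase the whole argument purely in terms of $\cal E$-local objects and $\cal E$-local equivalences, with no model structure on the $\cal E$-side, which is all the statement actually requires.
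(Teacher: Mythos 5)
Your argument is correct and follows essentially the same route as the paper: the paper's one-line proof invokes Theorem~\ref{dual-Brown} (whose content is Proposition~\ref{E-local} plus the local Whitehead theorem) to identify the $\cal E$-local objects with the $Q$-local ones, exactly the two inclusions you spell out, with the easy inclusion being the observation preceding Proposition~\ref{E-local} that $Q$ inverts $\cal F_1\cup\cal F_2$. Your closing caveat about avoiding a genuine $\cal E$-local model structure on the class-combinatorial category and working only with $\cal E$-local objects and $\cal E$-equivalences matches the level at which the paper itself argues.
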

\begin{proof}
All the maps in $\cal E$ are $Q$-equivalences by Proposition~\ref{maps-in-E-are-Q-equiv}. Moreover, by Theorem~\ref{dual-Brown} the $\cal E$-local objects are precisely the $Q$-local objects, hence the class of $\cal E$ coincides with the class of $Q$-equivalences.
\end{proof}

\appendix

\section{A non-functorial Bousfield-Friedlander localization}\label{gen-BF}
In this section, we generalize the Bousfield-Friedlander localization machinery, originally devised in \cite[Appendix A]{BF:gamma} and improved on by Bousfield in \cite[Section 9]{Bou:telescopic}, so that it will apply to the localization constructions, which are not necessarily functorial. Let us assume that the model category \cat C is both left and right proper in this appendix, so that we can use the elementary properties of homotopy pushouts and homotopy pullbacks freely.

\begin{definition}\label{non-func-loc}
A (non-functorial) \emph{homotopy localization construction} $Q$ in a model category \cat C is an assignment of a map $\eta_{X}\colon X\to QX$ for every $X\in \cat C$ and of a map $Qf\colon QX\to QY$ for every map $f\colon X\to Y$ in $\cat C$, such that for all $X\in \cat C$ the maps $\eta_{QX}, Q\eta_{X}\colon QX\to QQX$ are weak equivalences and $Qf$ is a weak equivalence for all weak equivalences $f$ in $\cat C$. A map $f\colon X\to Y$ in \cat C is a $Q$\emph{-equivalence} if $Qf\colon QX\to QY$ is a weak equivalence, a $Q$\emph{-cofibration} if $f$ is a cofibration, and a $Q$\emph{-fibration} if the filler exists in each commutative diagram
\[
\xymatrix{
A\ar[r]\ar@{^{(}->}[d]_{i}& X\ar[d]^{f}\\
B\ar[r] & Y
}
\]
where $i$ is a $Q$-cofibration and a $Q$-equivalence.
\end{definition}

We consider the following conditions on a homotopy localization construction $Q$ in the category $\cat C$.

\begin{cond} \label{natural}
For all maps $f\colon X\to Y$ in $\cat C$, $\eta_{Y}f = Qf\eta_{X}$, i.e., the square
\[
\xymatrix{
X\ar[r]^{\eta_{X}}\ar[d]_{f}& QX\ar[d]^{Qf}\\
Y\ar[r]_{\eta_{Y}} & QY
}
\]
is commutative.
\end{cond}

\begin{cond} \label{retract}
Any retract of a $Q$-equivalence is a $Q$-equivalence
\end{cond}

\begin{cond} \label{2-out-of-3}
$Q$-equivalences satisfy the ``2-out-of-3'' property.
\end{cond}

\begin{cond}\label{new_funct}
For all commutative squares
\[
\xymatrix{
X_{1} \ar[d]_{f_{12}}\ar[r]^{f_{13}}&X_{3}\ar[d]^{f_{34}}\\
X_{2}\ar[r]_{f_{24}}&X_{4}
}
\]
in \cat C there exists a commutative cube
   $$\xy
   \xymatrix"*"@=13pt{ Q'X_{1} \ar[rr] \ar'[d] [dd] & & Q'X_{3} \ar[dd] \\
                                                & &                         \\
              Q'X_{2}  \ar'[r] [rr] & & Q'X_{4}  }
   \POS(-10.5,-9)
   \xymatrix@=17pt{ X_{1} \ar["*"] \ar[rr]\ar[dd] & & X_{3} \ar[dd] \ar["*"]\\
                                  & &                \\
                    X_{2} \ar[rr] \ar["*"] & &  X_{4} \ar["*"]  }
   \endxy
   $$
such that for all $1\leq a\leq 4$ the map $X_a\to Q'X_{a}$ factors as $\eta_{X_a}\colon X_a \to QX_a$ composed with a weak equivalence $QX_a \we Q'X_{a}$, and for all $1\leq a<b\leq 4$ the map  $Q'f_{ab}\colon Q'X_{a}\to Q'X_{b}$ is a weak equivalence if and only if  $Qf_{ab}$ is.
\end{cond}

The following strengthening of condition \ref{new_funct} is not used in the proof of the localization theorem, but it is required for the ``only if" part in the classification of $Q$-fibrations. 

\begin{condstarred}\label{more_funct}
In addition to the conditions of \ref{new_funct} we require that every morphism of maps $f_{ab}\to Q'f_{ab}$ in the commutative cube of \ref{new_funct} factors through $\eta_{f_{ab}}\colon f_{ab}\to Qf_{ab}$, which exists by \ref{natural}.
\end{condstarred}

Note that in \ref{more_funct} we do not require that the square of maps $Qf_{ab}$ commutes.

The following classical condition (cf. \cite[A.6]{BF:gamma}) is necessary for the localization theorem
\begin{cond} \label{A6}
If in the pullback square
\[
\xymatrix{
W\ar[r]\ar[d]_{g}& X\ar[d]^{f}\\
Z\ar@{->>}[r]_{h} & Y
}
\]
$h$ is a $Q$-fibration, $f$ is a $Q$-equivalence, then $g$ is a $Q$-equivalence.
\end{cond}

The following proposition was proved in \cite[A.1]{BF:gamma}.
\begin{proposition}\label{A.1}
Let \cat C be a proper model category and let $f\colon X\to Y$ in \cat C. For each factorization $[f]=vu$ in $\Ho(\cat C)$ there is a factorization $f=ji$ in \cat C such that $i$ is a cofibration, $j$ is a fibration, and the factorization $[f]=[j][i]$ is equivalent to $[f]=vu$ in $\Ho(\cat C)$ (i.e., there exists an isomorphism $w$ in $\Ho(\cat C)$ such that $wu=[i]$ and $[j]w=v$.)
\end{proposition}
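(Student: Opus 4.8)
The plan is to follow \cite[A.1]{BF:gamma}: first realize the given factorization of $[f]$ by honest maps among cofibrant--fibrant objects, then factor it there inside $\cat C$, and finally transport the result back to $X$ and $Y$ while keeping it a cofibration followed by a fibration. To begin, replace the object through which $vu$ factors by a cofibrant--fibrant object $W$ isomorphic to it in $\Ho(\cat C)$, adjusting $u$ and $v$ by the comparison isomorphism; thus we may assume $W$ is cofibrant and fibrant. Fix a cofibrant approximation $p\colon\tilde X\trivfibr X$ and a fibrant approximation $q\colon Y\trivcofib\tilde Y$. Since $W$ is fibrant and $\tilde X$ is cofibrant, $\cat C(\tilde X,W)\to\Ho(\cat C)(\tilde X,W)$ is surjective, so $u\circ[p]$ is represented by an honest map $u_{0}\colon\tilde X\to W$; dually $[q]\circ v$ is represented by some $v_{0}\colon W\to\tilde Y$. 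Then $[v_{0}u_{0}]=[q][f][p]$, and as both $v_{0}u_{0}$ and $qfp$ are maps from the cofibrant object $\tilde X$ to the fibrant object $\tilde Y$, they are homotopic in $\cat C$.

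Next I would factor inside $\cat C$ and correct up to homotopy. Factor $u_{0}=s\circ a$ with $a\colon\tilde X\cofib V$ a cofibration and $s\colon V\trivfibr W$ a trivial fibration, and factor $v_{0}s$ as $V\xhookrightarrow{\,b\,}V'\xrightarrow{\,c\,}\tilde Y$ with $b$ a trivial cofibration and $c$ a fibration; then $ba$ is a cofibration, $c$ is a fibration, $cba=v_{0}u_{0}$, and the factorization $[cba]=[c][ba]$ is equivalent, via the isomorphism $[b][s]^{-1}$, to the factorization $([q]v)(u[p])$ of $[qfp]$. To replace $v_{0}u_{0}$ by $qfp$ itself, choose a cylinder object on $\tilde X$ and a left homotopy $H$ from $cba$ to $qfp$; lifting $H$ through the fibration $c$ against the trivial cofibration $\tilde X\to\operatorname{Cyl}(\tilde X)$, restricting along the other end inclusion, and re-factoring the result as a cofibration followed by a trivial fibration produces a cofibration--fibration factorization of $qfp$ that is still equivalent to $([q]v)(u[p])$, since each step merely composes the middle object with an isomorphism in $\Ho(\cat C)$.

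It remains to undo the two approximations. Starting from $qfp=J\circ I$ with $I$ a cofibration and $J$ a fibration, pull $J$ back along the trivial cofibration $q$: by right properness the comparison map to the old middle object is a weak equivalence, the induced map to $Y$ is a fibration, and the universal map out of $\tilde X$ determined by $I$ and $f\circ p$ can be re-factored as a cofibration followed by a trivial fibration to recover the cofibration property; this gives a cofibration--fibration factorization of $fp$. Now cobase-change the cofibration part along the trivial fibration $p$: by left properness the map from the old middle object to the pushout is a weak equivalence, the map from $X$ to the pushout is a cofibration, the induced map to $Y$ restricts to $f$ on $X$, and a final factorization as a trivial cofibration followed by a fibration makes it a fibration. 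The composite $X\to Y$ so obtained equals $f$ with first leg a cofibration and second leg a fibration, and tracking the isomorphisms introduced at each correction shows its class in $\Ho(\cat C)$ is a factorization equivalent to $vu$.

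I expect the main obstacle to be precisely this last step: one must simultaneously use left and right properness to guarantee that the repeatedly corrected middle objects remain weakly equivalent --- so that the ``cofibration then fibration'' shape survives both de-replacement steps --- and verify that none of the corrections destroys the $\Ho(\cat C)$-equivalence with $vu$, which holds because every correction replaces the middle object by a weakly equivalent one lying over the unchanged composite.
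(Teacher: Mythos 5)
Your argument is correct: realizing $u$ and $v$ by honest maps from the cofibrant replacement $\tilde X$ to the fibrant replacement $\tilde Y$ through a cofibrant--fibrant middle object, correcting up to homotopy by lifting the cylinder homotopy against the fibration $c$, and then removing the two replacements by a pullback along $q$ (right properness) and a pushout along $p$ (left properness) with the small refactorizations you indicate, all goes through, and the bookkeeping of isomorphisms in $\Ho(\cat C)$ checks out. The paper gives no proof of this statement --- it only cites \cite[A.1]{BF:gamma} --- and your reconstruction follows essentially the same standard route as that source (cofibrant--fibrant intermediate object, covering-homotopy correction, properness to descend), so there is nothing substantive to compare.
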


The main goal of this appendix is to prove the following
\begin{theorem}\label{main}
Given a localization construction $Q$ in a model category \cat C satisfying  \emph{\ref{natural}, \ref{retract}, \ref{2-out-of-3}, \ref{new_funct}}, and \emph{\ref{A6}}, the category \cat C equipped with $Q$-equivalences as weak equivalences, $Q$-cofibrations as cofibrations and $Q$-fibrations as fibrations is a right proper model category denoted by $\cat C^{Q}$. Moreover,  a map $f\colon X\to Y$ in \cat C is a $Q$-fibration if
and only if 
$f$ is a fibration and
\[
\xymatrix{
X \ar[r]^{\eta_{X}} \ar[d]_{f} & QX \ar[d]^{Qf}\\
Y \ar[r]_{\eta_{Y}} & QY
}
\]
is a homotopy pullback square in \cat C. The ``only if" direction of this classification of $Q$-fibrations depends on the additional assumption \ref{more_funct}.
\end{theorem}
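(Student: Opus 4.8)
The plan is to follow the Bousfield--Friedlander argument of Appendix~A of \cite{BF:gamma}, with condition~\ref{new_funct} taking over the role that strict functoriality of $Q$ plays in the classical case. The category $\cat C$ underlying $\cat C^{Q}$ has all small limits and colimits; $Q$-equivalences satisfy two-out-of-three and are closed under retracts by \ref{2-out-of-3} and \ref{retract}; $Q$-cofibrations and $Q$-fibrations are closed under retracts since they are defined by lifting properties; and the lifting axiom pairing trivial $Q$-cofibrations with $Q$-fibrations is the definition of $Q$-fibration. So the real content is threefold: (a) identify the maps that are simultaneously $Q$-fibrations and $Q$-equivalences with the trivial fibrations of $\cat C$; (b) produce a factorization into a trivial $Q$-cofibration followed by a $Q$-fibration; (c) deduce the stated characterization of $Q$-fibrations. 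For (a): a trivial fibration of $\cat C$ has the right lifting property against all cofibrations, in particular against all trivial $Q$-cofibrations, so it is a $Q$-fibration, and since $Q$ preserves weak equivalences it is a $Q$-equivalence; conversely, given a $Q$-fibration $f$ that is a $Q$-equivalence, factor it in $\cat C$ as a trivial cofibration followed by a trivial fibration, note that the trivial cofibration is a $Q$-equivalence hence a trivial $Q$-cofibration, lift $f$ against it, and conclude by the retract argument that $f$ is a retract of a trivial fibration. This simultaneously yields the $Q$-cofibration/trivial-$Q$-fibration factorization and the remaining lifting axiom, straight from the model structure on $\cat C$.

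The technical core is a base case followed by the general factorization. First I would prove the \emph{sublemma}: a fibration $d\colon E\to B'$ for which $\eta_{E}$ and $\eta_{B'}$ are weak equivalences is a $Q$-fibration. Given a trivial $Q$-cofibration $i\colon A\to B$ and a lifting square into $d$, apply $\eta$ and use \ref{natural}: passing to $\Ho(\cat C)$, the maps $\eta_{E}$, $\eta_{B'}$, and $Qi$ become isomorphisms, so the $Q$-ified square has a lift in $\Ho(\cat C)$; transporting back along $\eta$ gives a homotopy-commutative solution of the original square, which is then rigidified to a strict lift using that $i$ is a cofibration, $d$ a fibration, and $\cat C$ is proper (the mechanism behind Proposition~\ref{A.1}). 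Note that for any $X$ the objects $QX$ and their fibrant replacements have $\eta$ a weak equivalence, by the homotopy idempotence of $Q$ together with \ref{natural} and two-out-of-three, so the sublemma applies to maps built from them.

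Next I would carry out the factorization of an arbitrary $f\colon X\to Y$. Factor $Qf$ in $\cat C$ as $QX\to\overline{QX}\xrightarrow{d}QY$ with $d$ a fibration; form the pullback $P=Y\times_{QY}\overline{QX}$, with projections $pr\colon P\to Y$ (a fibration) and $\rho\colon P\to\overline{QX}$; by \ref{natural} there is a canonical map $\theta\colon X\to P$ with $pr\circ\theta=f$; factor $\theta$ as a cofibration $a\colon X\to W$ followed by a trivial fibration $W\to P$, and let $g$ be the composite $W\to P\xrightarrow{pr}Y$. Then $g$ is a $Q$-fibration, being the composite of a trivial fibration (a $Q$-fibration by (a)) with the base change $pr$ of the $Q$-fibration $d$ — and maps with a fixed lifting property are closed under base change and composition. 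And $a$ is a $Q$-equivalence: $\rho$ is the base change of the $Q$-equivalence $\eta_{Y}$ along the $Q$-fibration $d$, hence a $Q$-equivalence by \ref{A6}; the composite $\rho\theta$ is $\eta_{X}$ followed by a weak equivalence, hence a $Q$-equivalence because $Q$ is homotopy idempotent; so $\theta$, and therefore $a$, is a $Q$-equivalence by \ref{2-out-of-3}. Thus $f=g\circ a$ is the required factorization, and $\cat C^{Q}$ is a model category.

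For the ``Moreover'': the $g$ produced above has a homotopy-cartesian $\eta$-square, since $P$ is literally a homotopy pullback of $QX\to QY\leftarrow Y$ and $QW$ is identified with $QX\simeq Q\overline{QX}$ via the $Q$-equivalence $\rho$ and homotopy idempotence. Any $Q$-fibration has the right lifting property against the trivial $Q$-cofibration occurring in its own such factorization, hence is a retract of a $g$ of this form, and homotopy-cartesian $\eta$-squares are stable under retracts; this gives one direction. Conversely, if $h$ is a fibration with homotopy-cartesian $\eta$-square, factor it as $W\xrightarrow{a}W'\xrightarrow{g}Y$ as above; invoking \ref{new_funct} to supply the cube relating $h$, $g$ and their $Q$'s, the comparison maps to $Y\times^{h}_{QY}Q(-)$ are natural along this cube, and since $Qa$ is a weak equivalence, two-out-of-three forces $a$ itself to be a weak equivalence; then $h$, being a fibration, lifts against $a$, exhibiting $h$ as a retract of the $Q$-fibration $g$. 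I expect the main obstacle to be precisely the non-functoriality of $Q$: every place where the classical argument differentiates $Q$ along a map or a commuting square and reads off relations among $Q$-equivalences must be rerouted through \ref{natural} and, for squares, through the cube of \ref{new_funct}, and the rigidification of homotopy-commutative lifts to strict ones in the sublemma must be done by hand from properness rather than quoted from a functorial localization.
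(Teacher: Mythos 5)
Your skeleton is the same as the paper's (the analogue of Lemma~\ref{A.8}, then the factorization of $f$ through the pullback of a fibration replacing $Qf$ along $\eta_Y$, using \ref{A6} and \ref{2-out-of-3}, then the retract argument for the characterization of $Q$-fibrations), but at the two places where non-functoriality of $Q$ is the actual point, your justifications do not go through as written. First, in your sublemma (the paper's Lemma~\ref{A.8}(3)) you claim that ``the $Q$-ified square has a lift in $\Ho(\cat C)$'' using only \ref{natural}, idempotence and properness. The $Q$-ified square need not commute, even in $\Ho(\cat C)$: condition \ref{natural} gives $\eta$-naturality for each single map, but $Q$ of a composite is not the composite of the $Q$'s, so for a commuting square nothing identifies $Qd\circ Q(A\to E)$ with $Q(B\to B')\circ Qi$ up to homotopy; if you instead try to write down the Ho-level lift directly as $[\eta_E]^{-1}[Qb][Qi]^{-1}[\eta_B]$, the lower triangle reduces to exactly this missing commutativity. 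This is precisely what condition \ref{new_funct} is for, and since it only supplies a cube built on auxiliary objects $Q'X_a$ with factorizations $X_a\to QX_a\we Q'X_a$, the paper still has to work in the Reedy category of pairs and invoke Proposition~\ref{A.1} to rigidify; your sublemma must be routed through \ref{new_funct} in the same way, not just through \ref{natural}. (A minor slip in your step (a): you factor a map that is a $Q$-fibration and $Q$-equivalence as ``trivial cofibration followed by trivial fibration,'' which presupposes it is a weak equivalence; factor as a cofibration followed by a trivial fibration and apply \ref{2-out-of-3} to the cofibration, as in Lemma~\ref{A.8}(2).)

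Second, in the ``only if'' half of the characterization you assert that ``homotopy-cartesian $\eta$-squares are stable under retracts.'' For a non-functorial $Q$ this is not available off the shelf: exhibiting $f$ as a retract of $g$ via $(a,r)$ does not produce a retract, or even a comparison map, of $\eta$-squares, because the square with edges $Qa$, $Qf$, $Qg$, $Q(\mathrm{id}_Y)$ is not known to commute; and \ref{new_funct} replaces $Q$ by $Q'$, whose square for a fixed map cannot be directly interchanged with the $\eta$-square either (one only knows that $Q'f\circ w_X$ and $w_Y\circ Qf$ agree after precomposition with $\eta_X$). The paper avoids this entirely: writing $Qf=vu$ with $u$ a trivial cofibration and $v$ a fibration, the pullback $T=S\times_{QY}Y$ is a strict model for the homotopy pullback and the comparison map $u'\colon X\to T$ is defined on the nose by \ref{natural}; the $\eta$-square of $f$ is homotopy cartesian iff $u'$ is a weak equivalence, and when $f$ is a $Q$-fibration this is proved inside $\cat C$ by lifting $f$ against the trivial $Q$-cofibration $i$ from its own factorization $u'=ki$ and exhibiting $u'$ as a retract of the trivial fibration $k$. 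Your converse (``if'') direction also simplifies in the same spirit, as in Lemma~\ref{if-dir}: the hypothesis says exactly that the strictly-defined map $\theta\colon X\to P$ is a weak equivalence, so $a$ is one by \ref{2-out-of-3} applied in $\cat C$ and the retract trick finishes; no appeal to \ref{new_funct} or to ``naturality along the cube'' is needed there.
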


\begin{lemma}\label{A.8}
Given a localization construction $Q$ in a model category \cat C satisfying \emph{\ref{natural}, \ref{retract}, \ref{2-out-of-3}}, and \emph{\ref{new_funct}}, then
\begin{enumerate}
\item \label{s1}  $\cat C^{Q}$ satisfies {\rm CM1--CM4} and the ``cofibration, trivial fibration'' part of {\rm CM5};
\item \label{s2} A map $f\colon X\to Y$ in \cat C is a trivial fibration in $\cat C^{Q}$ iff $f$ is a trivial fibration in $\cat C$;
\item \label{s3} If $f\colon X\to Y$ is a fibration in \cat C and both $\eta_{X}\colon X\to QX$ and $\eta_{Y}\colon Y\to QY$ are weak equivalences, then $f$ is a $Q$-fibration.
\end{enumerate}
\end{lemma}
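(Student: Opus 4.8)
The plan is to prove the three parts in the order (2), (1), (3), since (1) depends on (2) while (3) is essentially independent of both. For (2), the substantive direction is that a map $f\colon X\to Y$ which is a $Q$-fibration and a $Q$-equivalence is a trivial fibration of $\cat C$. I would factor $f$ in $\cat C$ as a cofibration $j$ followed by a trivial fibration $p$. Since $p$, being a weak equivalence of $\cat C$, is a $Q$-equivalence, condition \ref{2-out-of-3} shows that the cofibration $j$ is a $Q$-equivalence as well; hence $j$ is a $Q$-trivial cofibration, the $Q$-fibration $f$ has the right lifting property against it, and the usual retract argument exhibits $f$ as a retract of $p$, so $f$ is a trivial fibration of $\cat C$. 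Conversely, a trivial fibration of $\cat C$ is a $Q$-equivalence because $Q$ preserves weak equivalences, and it lifts against every cofibration of $\cat C$, in particular against every $Q$-trivial cofibration, so it is a $Q$-fibration.

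Part (1) is then bookkeeping. CM1 is inherited from $\cat C$; CM2 is \ref{2-out-of-3}; for CM3, $Q$-equivalences are closed under retracts by \ref{retract}, $Q$-cofibrations are exactly the cofibrations of $\cat C$, and $Q$-fibrations are defined by a lifting property. Of CM4, one half is the definition of a $Q$-fibration, and the other uses (2): a $Q$-trivial fibration is a trivial fibration of $\cat C$, hence lifts against all $Q$-cofibrations. The ``cofibration, trivial fibration'' half of CM5 is obtained by factoring a map of $\cat C$ as a cofibration followed by a trivial fibration and invoking (2) for the second map.

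Part (3) is the heart of the matter: I must show that a fibration $f\colon X\to Y$ of $\cat C$ with $\eta_{X}$ and $\eta_{Y}$ weak equivalences has the right lifting property against every $Q$-trivial cofibration $i\colon A\to B$. A first, purely formal, observation is that such a lifting problem is solvable in $\Ho(\cat C)$: applying \ref{new_funct} to the square yields a commutative cube whose front face is a square of $Q'$-objects in which $Q'i$ is a weak equivalence (because $Qi$ is, as $i$ is a $Q$-equivalence), and whose connecting maps $X\to Q'X$ and $Y\to Q'Y$ are weak equivalences since $\eta_{X}$ and $\eta_{Y}$ are; chasing the cube then produces a lift $B\to X$ in $\Ho(\cat C)$. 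The genuine work is to rectify this to an honest lift in $\cat C$, and here the hypothesis that $X$ and $Y$ are $Q$-local is used essentially: it forces them to be local with respect to all $Q$-equivalences, so that mapping a $Q$-equivalence into $X$ or $Y$ induces a weak equivalence of homotopy function complexes; combined with $f$ being a fibration, the standard lifting argument of left Bousfield localization then produces the lift, after replacing $i$ by a cofibration between cofibrant objects via the factorizations of $\cat C$.

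I expect this last rectification to be the main obstacle. Parts (1) and (2), and the homotopy-category step of (3), are formal; but in (3) one cannot simply apply $Q$ to a whole lifting square and must instead build the needed homotopies by hand, running the cube argument above on the squares coming from cosimplicial and simplicial resolutions, as a substitute for the corresponding step of \cite[Appendix A]{BF:gamma}. This is precisely why condition \ref{new_funct} is imposed.
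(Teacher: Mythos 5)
Your treatment of parts (\ref{s2}) and (\ref{s1}) is correct and essentially identical to the paper's: the ``only if'' half of (\ref{s2}) by factoring $f=ji$ in $\cat C$, noting $i$ is a $Q$-equivalence by \ref{2-out-of-3} and retracting $f$ onto $j$, and then (\ref{s1}) as bookkeeping on top of it. The problem is part (\ref{s3}), which you rightly call the heart of the matter but do not actually prove. Your plan hinges on the assertion that $\eta_X,\eta_Y$ being weak equivalences ``forces $X$ and $Y$ to be local with respect to all $Q$-equivalences,'' i.e.\ that mapping a $Q$-equivalence into them induces weak equivalences of homotopy function complexes, after which you invoke ``the standard lifting argument of left Bousfield localization.'' Neither step is available here. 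The locality claim is not a consequence of conditions \ref{natural}, \ref{retract}, \ref{2-out-of-3}, \ref{new_funct}: $Q$ is only a (non-functorial) homotopy-idempotent construction, with no universal property of $\eta$ and no generating set of maps, so there is no a priori reason why a map with $Qf$ a weak equivalence should be seen as an equivalence by homotopy function complexes into objects with $\eta$ a weak equivalence. That statement is essentially equivalent to the theorem being proved (it identifies $Q$-equivalences between cofibrant objects with maps detected by the fibrant objects of $\cat C^Q$), so assuming it at this stage is circular. Moreover, even granting it, your rectification scheme --- running the cube of \ref{new_funct} over cosimplicial/simplicial resolutions --- cannot work as stated: \ref{new_funct} produces, for each single square, some non-canonical objects $Q'X_a$ with no naturality, so these choices cannot be assembled coherently along a resolution; and $X$, $Y$ are not assumed fibrant in $\cat C$, which the function-complex lifting arguments of Bousfield localization require. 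Finally, replacing $i$ by a cofibration between cofibrant objects only yields a lift for the replacement, and transferring it back to $i$ is exactly the kind of rectification you have not supplied.

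The paper closes this gap by a different and more economical mechanism, which you may want to compare with: regard the lifting square as a map $i\to f$ in the arrow category $\cat C^{\textup{Pairs}}$ with its Reedy model structure; a single application of \ref{new_funct}, together with the hypothesis that $\eta_X$ and $\eta_Y$ are weak equivalences, shows that $[i\to f]$ factors in $\Ho(\cat C^{\textup{Pairs}})$ through the object $Q'i$, which is a weak equivalence of $\cat C$ because $Qi$ is. Proposition~\ref{A.1} (applied in the arrow category) then rigidifies this homotopy-category factorization to an honest factorization $i\to h\to f$ in $\cat C^{\textup{Pairs}}$ with $h$ a weak equivalence, and the filler is produced by ordinary CM5 and CM4 in $\cat C$, using that $i$ is a cofibration and $f$ a fibration. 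No locality of $X$ and $Y$ with respect to all $Q$-equivalences, and no function-complex machinery, is needed. As it stands, your proof of (\ref{s3}) has a genuine gap at the locality claim and at the coherence of the $Q'$-choices, and these are precisely the points the paper's Reedy-category argument is designed to avoid.
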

\begin{proof}
We follow the plan of the original proof \cite[A.8]{BF:gamma} specifying the changes necessary for our generalization.

We start with statement (\ref{s2}), since it is used for the proof of (\ref{s1}). The ``if'' direction of (\ref{s2}) follows from definitions and ``only if'' follows by first factoring $f$ as $f=ji$, with $i$ a cofibration and $j$ a trivial fibration, and then noting that $f$ is a retract of $j$ by a lifting argument using the fact that $i$ is a $Q$-equivalence by (\ref{2-out-of-3}). For (\ref{s3}), it suffices to show that the filler exists in each commutative square
\[
\xymatrix{
A \ar[r] \ar@{^{(}->}[d]_{i} &X\ar[d]^{f}\\
B\ar[r]\ar@{-->}[ur]&Y
}
\]
with $i$ a trivial cofibration in $\cat C^{Q}$.

Consider the Reedy model structure on the category $\cat C^\text{Pairs}$. Then the commutative square above may be viewed as a map $i\to f$. Applying \ref{new_funct}, we obtain the commutative diagram
\[
\xymatrix{
          & Q'A\ar[rr]\ar'[d][dd]& &Q'X\ar[dd]\\
A \ar[rr]\ar[dd]_{i}\ar[ur]& & X\ar[dd]^<<<<<{f}\ar[ur]\\
          & Q'B\ar'[r][rr]& &Q'Y\\
B \ar[rr]\ar[ur]        & & Y\ar[ur]
}
\]
equipped with the factorizations on the right side face:
\[
\xymatrix{
A\ar[d]_i\ar[r] & Q'A\ar[r]\ar[d]_{Q'i} & Q'X \ar[d]_{Q'f} & QX \ar[l]_\simeq  & X\ar[l]^{\eta_{X}}_{\simeq}\ar[d]^{f}\\
B\ar[r] &  Q'B\ar[r] & Q'Y & QY \ar[l]_\simeq & Y.\ar[l]^{\eta_{Y}}_\simeq
}
\]
Therefore, the original map of maps $i\to f$ factors in $\Ho(\cat C^\text{Pairs})$ through $[Q'i]$, which is an isomorphism since $[Qi]$ is. Applying \ref{A.1}, we obtain a commutative diagram
\[
\xymatrix{
A\ar[d]_i\ar[r] & V\ar[d]_{h}\ar[r] & X \ar[d]^{f}\\
B\ar[r] & W\ar[r]  & Y,
}
\]
where $h$ is isomorphic to $Q'i$ in $\Ho(\cat C^\text{Pairs})$. Then $h$ is a weak equivalence, and therefore we apply {\rm CM5} to $h$ and use {\rm CM4} to obtain the desired filler.
\end{proof}

\begin{lemma}\label{if-dir}
If $f\colon X\to Y$ is  a fibration in $\cat C$ and
\[
\xymatrix{
X\ar[r]^{\eta_X}\ar[d]_{f} & QX\ar[d]^{Qf}\\
Y\ar[r]_{\eta_Y} & QY}
\]
is a homotopy pullback square, then $f$ is a $Q$-fibration.
\end{lemma}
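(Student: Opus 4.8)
The plan is to realize $f$, over $Y$, as a retract of a map that is manifestly a $Q$-fibration, using Lemma~\ref{A.8}(\ref{s3}) as the source of $Q$-fibrations together with the fact that the class of $Q$-fibrations, being defined by a right lifting property, is closed under composition, pullback and retracts. First I would produce a good replacement of $Qf$: factor $Qf\colon QX\to QY$ in $\cat C$ as a trivial cofibration $u\colon QX\cofib Z$ followed by a fibration $v\colon Z\fibr QY$. Both $Z$ and $QY$ are ``$Q$-local'' in the sense needed for Lemma~\ref{A.8}(\ref{s3}): by homotopy idempotence $\eta_{QY}$ and $\eta_{QX}$ are weak equivalences, and applying condition~\ref{natural} to the weak equivalence $u$ gives $\eta_Z u = Qu\,\eta_{QX}$, in which $u$ and $Qu$ are weak equivalences, so $\eta_Z$ is a weak equivalence. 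Hence Lemma~\ref{A.8}(\ref{s3}) applies to $v$, and $v$ is a $Q$-fibration.

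Next I would form the pullback $P=Y\times_{QY}Z$ with projections $g\colon P\to Y$ and $\pi\colon P\to Z$; then $g$ is a fibration, being a pullback of $v$, and in fact a $Q$-fibration, since $Q$-fibrations are stable under pullback. Condition~\ref{natural} for $f$ reads $\eta_Y f = Qf\,\eta_X = v\,u\,\eta_X$, so the pair $(f,\,u\eta_X)$ determines a map $w\colon X\to P$ with $g w=f$. The crucial point is that $w$ is a weak equivalence. By hypothesis the square $\eta_Y f = Qf\,\eta_X$ is a homotopy fibre square, and it maps to the square $\eta_Y f = v\,(u\eta_X)$ via the identity on $X$, $Y$, $QY$ and via $u$ on the remaining corner; since $u$ is a weak equivalence, this second square is also a homotopy pullback, and since $v$ is a fibration the actual pullback $P$ computes it (here one uses that $\cat C$ is right proper). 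Hence the comparison map $w$ is a weak equivalence.

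Finally, factor $w$ as a trivial cofibration $s\colon X\cofib X''$ followed by a fibration $t\colon X''\to P$; by two-out-of-three $t$ is a trivial fibration, hence a $Q$-fibration, so $g t\colon X''\to Y$ is a composite of $Q$-fibrations and therefore a $Q$-fibration. Since $s$ is an \emph{ordinary} trivial cofibration and $f$ is an \emph{ordinary} fibration in $\cat C$, and $g t\,s = g w = f$, the square with $s$ on the left, $f$ on the right, $\id_X$ on top and $g t$ on the bottom admits a lift $r\colon X''\to X$ by {\rm CM4} in $\cat C$. This exhibits $f$ as a retract of $g t$ over $Y$, and $Q$-fibrations are closed under retracts, so $f$ is a $Q$-fibration. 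I expect the one genuinely delicate step to be the weak-equivalence claim for $w$, i.e.\ correctly transporting the homotopy-pullback hypothesis across the fibrant replacement $u$ of $Qf$; the remaining steps are formal consequences of the stability properties of right lifting classes and of Lemma~\ref{A.8}.
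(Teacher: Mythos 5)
Your proof is correct and follows essentially the same route as the paper: both factor $Qf=vu$, form the pullback $P$ of $v$ along $\eta_Y$, use the homotopy fibre square hypothesis (with right properness) to see that the comparison map $X\to P$ is a weak equivalence, invoke Lemma~\ref{A.8}(\ref{s3}) for $v$, and then factor that comparison map and use that $f$ is a fibration to produce a retraction. The only difference is cosmetic: the paper finishes by solving an arbitrary lifting problem against a trivial $Q$-cofibration directly, while you package the same data as exhibiting $f$ as a retract of the $Q$-fibration $gt$ and appeal to closure of right-lifting classes under pullback, composition and retract.
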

\begin{proof}
Let $i\colon A\to B$ be a trivial cofibration in $\cat C^{Q}$. We need to construct a lift in any commutative square $i\to f$.
\[
\xymatrix{
A
\ar@{^(->}[dd]_i
\ar[rr]
  &   &  X
	\ar[dd]_f
	\ar[rr]
	\ar[dr]^w_{\dir{~}}  &   & QX
					\ar[dd]
					\ar@{^(->}[dr]^u_{\dir{~}}\\
  &   &     & P
		\ar[rr]|\hole
		\ar@{->>}[dl]_{v'}		 &   &  S
							\ar@{->>}[dl]^v\\
B
\ar[rr]
  &   &  Y
	\ar[rr]  &   & QY
}
\]

Consider first the composed map $i\to Qf$ and factor $Qf$ as $Qf = vu$ where $u\colon QX\to S$ is a trivial cofibration in \cat C and $v\colon S\to QY$ is a fibration. Let $P=S\times_{QY} Y$ be the pullback; then, the induced map $w\colon X\to P$ is a weak equivalence by assumption.

By Lemma \ref{A.8}(\ref{s3}) $v$ is a $Q$-fibration, hence there exists the filler $B\to S$, and hence $B\to P$ by the universal property of the pullback. If we factor $w$ as $w=kl$ where $l\colon X\to T$ is a cofibration and $k\colon T\to P$ is a fibration and both are weak equivalences, then there exists a lift $B\to T$  in the commutative square $i\to k$, since $i$ is a cofibration and $k$ is a trivial fibration in \cat C.

\[
\xymatrix{
X
\ar@{=}[rr]
\ar@{^(->}[d]_l^{\dir{~}}   &     &   X
						\ar@{->>}[d]^f\\
T
\ar@{->>}[r]_k^{\dir{~}} 
\ar@{-->}[urr]			  &  P
					\ar@{->>}[r]_{v'} &   Y
}
\]
 Next, $X$ is a retract of $T$ over $Y$, since $f$ is a fibration and there exists a lift in the above commutative square. We construct the required filler by composing the lift $B\to T$ with the retracting map $T\to X$. Commutativity of the bottom triangle in the square $i\to f$ with the lift $B\to X$ constructed follows from the commutativity of the above diagram.

Different argument: since the above retraction is over $Y$, then the map $f$ is a retract of the composition $v'k$, which is a $Q$-fibration, since $v'$ is a base change of a $Q$-fibration $v$ and $k$ is a trivial fibration in \cat C, hence $f$ is also a $Q$-fibration. 
\end{proof}

The following definition was suggested by A.~K.~Bousfield in a private correspondence together with the ``only if'' direction in the previous lemma.

\begin{definition}
A map $h$ in \cat C is called \emph{$Q$-compatible} if the commutative square $h\to Qh$ is a homotopy pullback square.
\end{definition}

\begin{remark}
$Q$-compatible maps are closed under composition and retracts due to corresponding properties of homotopy pullback squares, \cite[A2]{BF:gamma}

\end{remark}

\begin{proof}[Proof of Theorem~\emph{\ref{main}}]
It remains to factor a map $f\colon X\to Y$ in \cat C as $f=ji$, where $i$ is a $Q$-cofibration and $Q$-equivalence and $j$ is a $Q$-fibration. The proof is the same as  in \cite[A.10]{BF:gamma}.
\[
\xymatrix{
X\ar[ddd]_f \ar[rrr]^{\eta_{X}} \ar[ddr]_{u'} \ar@{^{(}->}[dr]^{i}
   & & & QX\ar[ddd]^{Qf} \ar@{_{(}->}[ddl]^u_{\dir{~}}\\
   & R \ar@{->>}[d]^{k}_<<<{\dir{~}}\\
   & T \ar@{->>}[dl]_{v'} \ar[r]& S \ar@{->>}[dr]_{v}\\
Y\ar[rrr]_{\eta_{Y}}
   & & & QY
}
\]
First factor $Qf$ as $Qf=vu$, where $u\colon QX\trivcofib S$ is a trivial cofibration and $v$ is a fibration in \cat C. Let $T= S\times_{QY} Y$. The natural map $u' \colon X\to T$ together with $v'$ factor $f$ as $f=v'u'$. The fibration  $v$ is a $Q$-fibration by \ref{A.8}(\ref{s3}) and $v'$ is a base change of $v$, hence also a $Q$-fibration. Moreover, the base change of the $Q$-equivalence $\eta_{Y}$ along a $Q$-fibration $v$ is a $Q$-equivalence by \ref{A6}, and hence $u'$ is a $Q$-equivalence by \ref{2-out-of-3}.

Factor $u'$ as $u'=ki$, where $i$ is  a cofibration and $k$ is a trivial cofibration, and hence, a $Q$-fibration by \ref{A.8}(\ref{s2}). Then the factorization $f=(v'k)i$ has the desired properties, since $v'k$ is a composition of two $Q$-fibrations and $i$ is a $Q$-equivalence by \ref{2-out-of-3}.

The ``if'' direction of the classification of fibrations is Lemma~\ref{if-dir}. 

The ``only if'' direction follows from the observation that $v'$ is a $Q$-compatible map. 
By \ref{new_funct} there is a commutative cube 
   $$\xy
   \xymatrix"*"@=13pt{ Q'T \ar[rr] \ar'[d] [dd] & & Q'S \ar[dd] \\
                                                & &                         \\
              Q'Y  \ar'[r] [rr] & & Q'QY  }
   \POS(-10.5,-9)
   \xymatrix@=17pt{ T \ar["*"] \ar[rr]\ar[dd]_{v'} & & S \ar[dd]_>>>>>v \ar["*"]\\
                                  & &                \\
                    Y \ar[rr]_{\eta_Y} \ar["*"] & &  QY \ar["*"]  }
   \endxy
   $$
with the front face being a homotopy pullback by construction and the right face also a homotopy pullback, since both slanted arrows are weak equivalences. Hence, the combined square is also a homotopy pullback. The back face of the cube is also a homotopy pullback since its horizontal maps are weak equivalences: $Q'\eta_Y\colon Q'Y\to Q'QY$ is a weak equivalence by \ref{new_funct} since $Q\eta_Y$ is a weak equivalence, and the map $Q'T\to Q'S$ is a weak equivalence by combining \ref{new_funct} and \ref{A6}. Since the cube is commutative the left face is also a homotopy pullback. 
Assumption \ref{new_funct} does not guarantee that it is possible to factor the slanted morphisms of the left face through $Qv'$. 
This is possible by the additional assumption, \ref{more_funct}. With it we conclude that $v'$ is $Q$-compatible. 

Next we see that $k$ is $Q$-compatible since it is a weak equivalence. Thus $v'k$ is $Q$-compatible as a composition of $Q$-compatible maps.

Given that the map $f$ is a $Q$-fibration, we need to show that $f$ is also $Q$-compatible. The map $i$ is a cofibration and a $Q$-equivalence, and hence $X$ is a retract of $R$ over $Y$. Therefore $f$ is a retract of $v'k$, i.e.,  a $Q$-compatible map, as desired.
\end{proof}

\begin{remark}
The ``if'' direction of the classification of fibrations is proved in Lemma~\ref{if-dir} and  does not rely on condition \ref{A6}, while the ``only if'' direction, proven in Theorem~\ref{main}, relies on \ref{A6} and on an additional condition \ref{more_funct}.
\end{remark}

\bibliographystyle{abbrv}
\bibliography{Xbib}

\end{document}